\documentclass[journal]{IEEEtran}

\usepackage[english]{babel}
\usepackage[utf8x]{inputenc}
\usepackage{amsmath}
\usepackage{graphicx}
\usepackage[colorinlistoftodos]{todonotes}
\usepackage{url}
\usepackage{algorithmic}
\usepackage{array}

\RequirePackage[colorlinks = true,
            linkcolor = black,
            urlcolor  = black,
            citecolor = black,
            anchorcolor = black]{hyperref}

\usepackage{mathtools, amssymb, amsthm, enumerate, graphicx, color, soul, enumitem}

\newcommand{\A}{\mathcal{A}}

\newcommand{\C}{\mathbb{C}}

\newcommand{\E}{\mathbb{E}}

\renewcommand{\H}{\mathcal{H}}

\renewcommand{\L}{\mathcal{L}}

\newcommand{\R}{\mathbb{R}}

\newcommand{\V}{\mathbb{V}}
\newcommand{\X}{\mathcal{X}}

\newcommand{\Z}{\mathbb{Z}}

\newcommand{\h}{\mathcal{H}}

\newcommand{\m}{\mathcal{M}}
\renewcommand{\r}{\mathcal{R}}
\newcommand{\x}{\mathcal{X}}

\newcommand{\Pt}{P_\theta}
\newcommand{\pt}{p_\theta}
\renewcommand{\th}{\hat{\theta}}
\newcommand{\Pth}{P_{\th}}
\newcommand{\pth}{p_{\th}}

\newcommand{\Te}{\Theta_\epsilon}
\newcommand{\te}{\theta_\epsilon}

\newcommand{\ts}{\theta^*}

\renewcommand{\tt}{\tilde{\theta}}

\newcommand{\Pts}{P_{\ts}}

\newcommand{\tr}{\text{tr} \,}

\newcommand{\iid}{\overset{iid}{\sim}}

 % is this being used anywhere?
 % or this?

\newtheorem{theorem}{Theorem}[section]
\newtheorem{lemma}[theorem]{Lemma}

\newtheorem{corollary}[theorem]{Corollary}

\begin{document}

\title{Finite-sample risk bounds for maximum likelihood estimation with arbitrary penalties}
\author{W. D. Brinda and
    Jason~M.~Klusowski,~\IEEEmembership{Student Member,~IEEE}
\thanks{W. D. Brinda and Jason M. Klusowski are with the Department of Statistics \& Data Science, Yale University, New Haven, CT, USA e-mail: \{william.brinda, jason.klusowski\}@yale.edu.}
\thanks{}}

\maketitle

\begin{abstract}
The MDL two-part coding \emph{index of resolvability} provides a finite-sample upper bound on the statistical risk of penalized likelihood estimators over countable models. However, the bound does not apply to unpenalized maximum likelihood estimation or procedures with exceedingly small penalties. In this paper, we point out a more general inequality that holds for arbitrary penalties. In addition, this approach makes it possible to derive exact risk bounds of order $1/n$ for iid parametric models, which improves on the order $(\log n)/n$ resolvability bounds. We conclude by discussing implications for adaptive estimation.
\end{abstract}

\begin{IEEEkeywords}
Penalized likelihood estimation, minimum description length, codelength, statistical risk, redundancy
\end{IEEEkeywords}

\IEEEpeerreviewmaketitle

\section{Introduction}

\IEEEPARstart{A} remarkably general method for bounding the statistical risk of penalized likelihood estimators comes from work on two-part coding, one of the minimum description length (MDL) approaches to statistical inference. Two-part coding MDL prescribes assigning codelengths to a model (or model class) then selecting the distribution that provides the most efficient description of one's data \cite{rissanen1978}. The total description length has two parts: the part that specifies a distribution within the model (as well as a model within the model class if necessary) and the part that specifies the data with reference to the specified distribution. If the codelengths are exactly Kraft-valid, this approach is equivalent to Bayesian maximum a posteriori (MAP) estimation, in that the two parts correspond to log reciprocal of prior and log reciprocal of likelihood respectively. More generally, one can call the part of the codelength specifying the distribution a \emph{penalty} term; it is called the \emph{complexity} in MDL literature.

Let $(\Theta, \L)$ denote a discrete set indexing distributions along with a complexity function. With $X \sim P$, the (pointwise) \emph{redundancy} of any $\theta \in \Theta$ is its two-part codelength minus $\log (1/p(X))$, the codelength one gets by using $P$ as the coding distribution.\footnote{For now, we mean that $P$ governs the entirety of the data. The notion of sample size and iid assumptions are not essential to the bounds, as will be seen in the statement of Theorem~\ref{thm:arbitrary-penalty}. Specialization to iid data will be discussed thereafter.} The expectation of redundancy is the relative entropy from $P$ to $\Pt$ plus $\L(\theta)$. Let $\ts \in \Theta$ denote the minimizer of expected redundancy; it is the average-case optimal representative from $(\Theta, \L)$ when the true distribution is $P$. Its expected redundancy will be denoted
\begin{align*}
\r_{\Theta, \L}(P) := \inf_{\theta \in \Theta} \{ D(P \| \Pt) + \L(\theta) \},
\end{align*}
or in the context of iid data $X^n \sim P^n$ and iid modeling $\{\Pt^n : \theta \in \Theta\}$, its expected redundancy rate is denoted
\begin{align*}
\r_{\Theta, \L}^{(n)}(P) := \inf_{\theta \in \Theta} \left\{ D(P \| \Pt) + \frac{\L(\theta)}{n} \right\}.
\end{align*}

Interestingly, \cite{barron1991} showed that if the complexity function is large enough, then the corresponding penalized likelihood estimator outperforms the best-case average representative. Specifically, the statistical risk is bounded by $\r_{\Theta, \L}(P)$; that result is stated for iid sampling in (\ref{eq:usual}) below.\footnote{Throughout the paper, we will refer to this inequality as ``the resolvability bound," but realize that there are a variety of related resolvability bounds in other contexts. They involve comparing risk to a codelength and lead to bounds that are suboptimal by a $\log n$ factor.}

There are a number of attractive features of the resolvability bound; we will highlight four. One of the most powerful aspects of the resolvability bound is the ease with which it can be used to devise adaptive estimation procedures for which the bound applies. For instance, to use a class of nested models rather than a single model, one only needs to tack on an additional penalty term corresponding to a codelength used to specify the selected model within the class.

Another nice feature is its generality: the inequality statement only requires that the data-generating distribution has finite relative entropy to some probability measure in the model.\footnote{Although the forthcoming resolvability bounds (i.e., as in \eqref{eq:usual} with $ \mathcal{L} $ that is at least twice a codelength function) are valid under misspecification, they do \emph{not} in general imply consistency in the sense that the corresponding penalized estimator eventually converges to the element $ \theta' $ of $\Theta$ that minimizes KL or Hellinger to the truth $ P $. Indeed, there are various examples \cite{grunwald2007b} in which the twice-codelength penalized estimator is inconsistent (i.e., provably never converges to $ \theta' $).} In practice, the common assumptions of other risk bound methods, for instance, that the generating distribution belongs to the model, are unlikely to be exactly true.

A third valuable property of the bound is its exactness for finite samples. Many risk bound methods only provide asymptotic bounds. But such results do not imply anything exact for a data analyst with a specific sample.

Lastly, the resolvability bound uses a meaningful loss function: $\alpha$-Renyi divergence \cite{renyi1961} with $\alpha \in (0, 1)$. For convenience, we specialize our discussion and our present work to Bhattacharyya divergence \cite{bhattacharyya1943} which is the $\tfrac{1}{2}$-Renyi divergence.
\begin{align*}
D_B(P, Q) := 2 \log \frac{1}{A(P, Q)},
\end{align*}
where $A$ denotes the Hellinger affinity
\begin{align*}
A(P, Q) &:= \int \sqrt{p(x) q(x)} dx\\
 &= \E_{X \sim P} \sqrt{\frac{q(X)}{p(X)}}.
\end{align*}
Like relative entropy, $D_B$ decomposes product measures into sums; that is,
\begin{align*}
A(P^n, Q^n) = A(P, Q)^n \quad \text{thus} \quad D_B(P^n, Q^n) = n D_B(P, Q).
\end{align*}

Bhattacharyya divergence is bounded below by squared Hellinger distance (using $\log 1/x \geq 1-x$) and above by relative entropy (using Jensen's inequality). Importantly, it has a strictly increasing relationship with squared Hellinger distance $D_H$, which is an $f$-divergence:
\begin{align*}
D_B = 2 \log \frac{1}{1 - D_H/2}.
\end{align*}
As such, it inherits desirable $f$-divergence properties such as the data processing inequality. Also, it is clear from the definition that $D_B$ is parametrization-invariant. For many more properties of $D_B$, including its bound on total variation distance, see \cite{erven2014}.

Next, we make note of some of the limitations of the resolvability bound. One complaint is that it is for discrete parameter sets, while people generally want to optimize penalized likelihood over a continuous parameter space. In practice, one typically selects a parameter value that is rounded to a fixed precision, so in effect the selection is from a discretized space. However, for mathematical convenience, it is nice to have risk bounds for the theoretical optimizer. A method to extend the resolvability bound to continuous models was introduced by \cite{barron2008}; in that paper, the method was specialized to estimation of a log density by linear combinations from a finite dictionary with an $l_1$ penalty on the coefficients. More recently, \cite{chatterjee2014b} worked out the continuous extension for Gaussian graphical models (building on \cite{luo2009}) with $l_1$ penalty assuming the model is well-specified and for linear regression with $l_0$ penalty assuming the true error distribution is Gaussian. These results are explained in more detail by \cite{chatterjee2014}, where the extension for the $l_1$ penalty for linear regression is also shown, again assuming the true error distribution is Gaussian.

Another limitation is that the resolvability bound needs a large enough penalty; it must have a finite Kraft sum. This paper provides a more general inequality that escapes such a requirement and therefore applies even to unpenalized maximum likelihood estimation. The resulting bound retains the four desirable properties we highlighted above, but loses the coding and resolvability interpretations.

Finally, the resolvability bounds for smooth parametric iid modeling are of order $(\log n)/n$ and cannot be improved, according to \cite{rissanen1986}, whereas under regularity conditions (for which Bhattacharyya divergence is locally equivalent to one-half relative entropy, according to \cite{barron2008}) the optimal Bhattacharyya risk is of order $1/n$ \cite{barron1998}. Our variant on the resolvability method leads to the possibility of deriving exact bounds of order $1/n$.

Our bounds can be used for the penalized MLE over a discretization of an \emph{unbounded} parameter space under a power decay condition on the Hellinger affinity, as in Theorems \ref{thm:gaussian-bound-center-power-bound-tail} and \ref{thm:gaussian-power-any-sample-size}. We show that such a condition is satisfied by exponential families of distributions with a boundedness assumption on the largest eigenvalue of the covariance matrix of their sufficient statistics (see Lemma \ref{lem:exponential-affinity-guassian-bound}). For these models and others, we establish order $ 1/n $ bounds for the Bhattacharyya risk. The primary focus of this paper is to develop new tools towards this end.

One highly relevant line of work is \cite{zhang2006}, where he established a more general resolvability risk bound for ``posterior'' distributions on the parameter space. Implications for penalized MLEs come from forcing the ``posteriors" to be point-masses. He derives risk bounds that have the form of $\r_{\Theta, \L}^{(n)}(P)$ plus a ``corrective'' term, which is comparable to the form of our results. Indeed, as we will point out, one of our corollaries nearly coincides with \cite[Thm 4.2]{zhang2006} but works with arbitrary penalties.

The trick we employ is to introduce an arbitrary function $L$, which we call a \emph{pseudo-penalty}, that adds to the penalty $\L$; strategic choices of pseudo-penalty can help to control the ``penalty summation" over the model. The resulting risk bound has an additional $\E L(\th)$ term that must be dealt with.

In Section~\ref{sec:discrete}, we prove our more general version of the resolvability bound inequality using a derivation closely analogous to the one by \cite{li1999}. We then explore corollaries that arise from various choices of pseudo-penalty. In Section~\ref{sec:adaptive-modeling}, we explain how our approach applies in the context of adaptive modeling. Additional work can be found in \cite{brinda2018}, including some simple concrete examples \cite[``Simples concrete examples", Sec 2.1.2]{brinda2018}, extension to continuous models \cite[``Continuous parameter spaces", Sec 2.2]{brinda2018}, and an application to Gaussian mixtures \cite[Chap 4]{brinda2018}.

Every result labeled a Theorem or Lemma has a formal proof, some of which are in the Appendix. Any result labeled a Corollary is an immediate consequence of previously stated results and thus no formal proof is provided. For any random vector $X$, the notation $\C X$ means the covariance matrix, while $\V X$ represents its trace $\E \| X - \E X \|^2$. The notation $\lambda_j(\cdot)$ means the $j$th eigenvalue of the matrix argument. Whenever a capital letter has been introduced to represent a probability distribution, the corresponding lower-case letter will represent a density for the measure with respect to either Lebesgue or counting measure. The \emph{penalized MLE} is the (random) parameter that maximizes log-likelihood minus penalty. The notation $D(P \| \Theta)$ represents the infimum relative entropy from $P$ to distributions indexed by the model $\Theta$. Multiplication and division take precedence over $\wedge$ and $\vee$; for instance, $ab \wedge c$ means $(ab) \wedge c$.

\section{Models with countable cardinality}\label{sec:discrete}

Let us begin with countable (e.g. discretized) models, which were the original context for the MDL penalized likelihood risk bounds. We will show that a generalization of that technique works for arbitrary penalties. The only assumption we need is that for any possible data, there exists a (not necessarily unique) minimizer of penalized likelihood.\footnote{We will say ``the" penalized MLE, even though we do not require uniqueness; any scheme can be used for breaking ties.} This existence requirement will be implicit throughout our paper. Theorem~\ref{thm:arbitrary-penalty} gives a general result that is agnostic about any structure within the data; the consequence for iid data with sample size $n$ is pointed out after the proof.

\begin{theorem}\label{thm:arbitrary-penalty}
Let $X \sim P$, and let $\th$ be the penalized MLE over $\Theta$ indexing a countable model with penalty $\L$. Then for any $L: \Theta \rightarrow \R$,
\begin{align*}
\E D_B(P, \Pth) & \leq \r_{\Theta, \L}(P) + \\ & \qquad 2 \log \sum_{\theta \in \Theta} e^{-\tfrac{1}{2}[\L(\theta) + L(\theta)]} + \E L(\th).
\end{align*}
\end{theorem}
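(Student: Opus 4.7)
The plan is to combine Jensen's inequality, applied to a carefully chosen test quantity, with the defining optimality of the penalized MLE $\th$. First I would introduce, for each $\theta \in \Theta$ with $A(P, \Pt) > 0$, the ``affinity density''
\begin{align*}
\rho_\theta(x) := \frac{\sqrt{p(x)\, \pt(x)}}{A(P, \Pt)},
\end{align*}
which for every fixed $\theta$ is a genuine probability density in $x$.

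Second, I would exploit the pointwise ``max $\leq$ sum'' inequality
\begin{align*}
\rho_{\th(x)}(x)\, e^{-[\L(\th(x)) + L(\th(x))]/2} \leq \sum_{\theta \in \Theta} \rho_\theta(x)\, e^{-[\L(\theta) + L(\theta)]/2},
\end{align*}
valid for every $x$ because each summand on the right is nonnegative and one of them is the left-hand side. Integrating in $x$ collapses the right-hand side into the Kraft-type sum $\sum_\theta e^{-[\L(\theta) + L(\theta)]/2}$ (each $\rho_\theta$ has total mass one), and rewrites the left-hand side as $\E[\sqrt{\pth(X)/p(X)}\, e^{-[\L(\th)+L(\th)]/2}/A(P, \Pth)]$ via the trivial change of measure $dx = p(x)^{-1}\, p(x)\, dx$.

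Third, I would apply Jensen (concavity of $\log$) to the inequality just obtained, expand the resulting $\log$ of a product, and use $-\log A(P, \Pth) = \tfrac12 D_B(P, \Pth)$. After multiplying through by $2$ and rearranging, this produces
\begin{align*}
\E D_B(P, \Pth) \leq 2\log \sum_\theta e^{-[\L(\theta)+L(\theta)]/2} + \E L(\th) + \E\bigl[\L(\th) + \log(p(X)/\pth(X))\bigr].
\end{align*}
To finish, I would invoke the pointwise optimality $\L(\th) - \log \pth(X) \leq \L(\theta) - \log \pt(X)$ of the penalized MLE; taking expectations and then the infimum over $\theta \in \Theta$ bounds the bracketed term by $\r_{\Theta, \L}(P)$.

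The step I expect to be the main obstacle is spotting the correct Jensen test quantity $\sqrt{\pth(X)/p(X)}\, e^{-[\L(\th)+L(\th)]/2}/A(P, \Pth)$. The normalization by $1/A(P, \Pth)$ looks unmotivated \emph{a priori}, but it is precisely this factor that produces the $\tfrac12 D_B(P, \Pth)$ term once the logarithm is taken; and the $e^{-[\L+L]/2}$ weighting is what makes the pointwise max-less-than-sum step yield exactly the desired Kraft-type sum $\sum_\theta e^{-[\L(\theta)+L(\theta)]/2}$ rather than a weaker quantity. Once the functional is in hand, the remaining steps are routine.
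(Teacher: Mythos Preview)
Your proposal is correct and follows essentially the same route as the paper's proof: both hinge on the normalized affinity quantity $\sqrt{\pth(X)/p(X)}\,e^{-[\L(\th)+L(\th)]/2}/A(P,\Pth)$, the pointwise ``max $\leq$ sum'' step over $\theta\in\Theta$, Jensen's inequality for the logarithm, and the defining optimality of the penalized MLE. The only cosmetic difference is the order of operations---the paper first rewrites $D_B(P,\Pth)$ as $2\log$ of this quantity plus a remainder, bounds the single term by the sum inside the log, and then applies Jensen to $\log\sum_\theta(\cdot)$; you instead take expectations of the raw inequality first and apply Jensen directly to $\log Y_{\th}$---but the two chains yield the identical bound.
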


\begin{proof}
We follow the pattern of Jonathan Li's version of the resolvability bound proof \cite{li1999}.
\begin{align*}
D_B(P, \Pth) &:= 2 \log \frac{1}{A(P, \Pth)}\\
 &= 2 \log \frac{\sqrt{\pth(X)/p(X)} e^{-\tfrac{1}{2}[\L(\th) + L(\th)]}}{A(P, \Pth)} + \\ & \qquad \log \frac{p(X)}{\pth(X)} + \L(\th) + L(\th)\\
 &\leq 2 \log \sum_{\theta \in \Theta} \frac{\sqrt{\pt(X)/p(X)} e^{-\tfrac{1}{2}[\L(\theta) + L(\theta)]}}{A(P, \Pt)} + \\ & \qquad \log \frac{p(X)}{\pth(X)} + \L(\th) + L(\th).
\end{align*}
We were able to bound the random quantity by the sum over all $\theta \in \Theta$ because each of these terms is non-negative.

We will take the expectation of both sides for $X \sim P$. To deal with the first term, we use Jensen's inequality and the definition of Hellinger affinity.
\begin{align*}
& 2\E \log \sum_{\theta \in \Theta} \frac{\sqrt{\pt(X)/p(X)} e^{-\tfrac{1}{2}[\L(\theta) + L(\theta)]}}{A(P, \Pt)} \\ & \leq 2 \log \sum_{\theta \in \Theta} \E \frac{\sqrt{\pt(X)/p(X)} e^{-\tfrac{1}{2}[\L(\theta) + L(\theta)]}}{A(P, \Pt)}
 \\ & = 2 \log \sum_{\theta \in \Theta} e^{-\tfrac{1}{2}[\L(\theta) + L(\theta)]}.
\end{align*}

Returning to the overall inequality, we have
\begin{align*}
\E D_B(P, \Pth) &\leq 2 \log \sum_{\theta \in \Theta} e^{-\tfrac{1}{2}[\L(\theta) + L(\theta)]} + \\ & \qquad \E \left[\log \frac{p(X)}{\pth(X)} + \L(\th)\right] + \E L(\th)\\
 &= 2 \log \sum_{\theta \in \Theta} e^{-\tfrac{1}{2}[\L(\theta) + L(\theta)]} + \\ & \qquad \E \min_{\theta \in \Theta} \left\{ \log \frac{p(X)}{\pt(X)} + \L(\theta) \right\} + \E L(\th)\\
 &\leq 2 \log \sum_{\theta \in \Theta} e^{-\tfrac{1}{2}[\L(\theta) + L(\theta)]} + \\ & \qquad \inf_{\theta \in \Theta} \E \left\{ \log \frac{p(X)}{\pt(X)} + \L(\theta) \right\} + \E L(\th)\\
 &= 2 \log \sum_{\theta \in \Theta} e^{-\tfrac{1}{2}[\L(\theta) + L(\theta)]} + \\ & \qquad \inf_{\theta \in \Theta} \left\{ D(P \| \Pt) + \L(\theta) \right\} + \E L(\th).
\end{align*}
\end{proof}

Suppose now that the data comprise $n$ iid observations and are modeled as such; in other words, the data has the form $X^n \sim P^n$, and the model has the form $\{ \Pt^n : \theta \in \Theta \}$. Because $D_B(P^n, \Pth^n) = n D_B(P, \Pth)$ and $D(P^n \| \Pt^n) = n D(P \| \Pt)$, we can divide both sides of Theorem~\ref{thm:arbitrary-penalty} by $n$ to reveal the role of sample size in this context:
\begin{align*}
\E D_B(P, \Pth) & \leq \r_{\Theta, \L}^{(n)}(P) + \\ & \qquad \frac{2 \log \sum_{\theta \in \Theta} e^{-\tfrac{1}{2}[\L(\theta) + L(\theta)]} + \E L(\th)}{n}.
\end{align*}

We will see three major advantages to Theorem~\ref{thm:arbitrary-penalty}. The most obvious is that it can handle cases in which the sum of exponential negative half penalties is infinite; unpenalized estimation, for example, has $\L$ identically zero. One consequence of this is that the resolvability method for minimax risk upper bounds can be extended to models that are not finitely covered by relative entropy balls. We will also find that Theorem~\ref{thm:arbitrary-penalty} enables us to derive exact risk bounds of order $1/n$ rather than the usual $(\log n)/n$ resolvability bounds.

In many cases, it is convenient to have only the $L$ function in the summation. Substituting $L - \L$ as the pseudo-penalty in Theorem~\ref{thm:arbitrary-penalty} gives us a corollary that moves $\L$ out of the summation.
\begin{corollary}\label{cor:subtract-penalty}
Let $X \sim P$, and let $\th$ be the penalized MLE over $\Theta$ indexing a countable model with penalty $\L$. Then for any $L: \Theta \rightarrow \R$,
\begin{align*}
\E D_B(P, \Pth) & \leq \r_{\Theta, \L}(P) + \\ & \qquad 2 \log \sum_{\theta \in \Theta} e^{-\tfrac{1}{2}L(\theta)} + \E L(\th) - \E \L(\th).
\end{align*}
\end{corollary}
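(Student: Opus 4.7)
The plan is to apply Theorem~\ref{thm:arbitrary-penalty} with a shifted pseudo-penalty, exactly as the text between the theorem and the corollary already hints. Theorem~\ref{thm:arbitrary-penalty} is a statement that holds for \emph{every} function $L : \Theta \rightarrow \R$, so it still holds if we substitute any function built out of $\L$ and the ``new'' pseudo-penalty we actually want to carry through to the final inequality.

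Concretely, I would define a new pseudo-penalty $L' := L - \L$ and feed $L'$ into Theorem~\ref{thm:arbitrary-penalty} in place of $L$. The exponent in the penalty summation then becomes
\begin{align*}
-\tfrac{1}{2}[\L(\theta) + L'(\theta)] = -\tfrac{1}{2}[\L(\theta) + L(\theta) - \L(\theta)] = -\tfrac{1}{2} L(\theta),
\end{align*}
so the $\L$ drops out of the sum, while the last term of Theorem~\ref{thm:arbitrary-penalty} becomes
\begin{align*}
\E L'(\th) = \E L(\th) - \E \L(\th).
\end{align*}
Plugging these two simplifications back into the inequality yields exactly the statement of Corollary~\ref{cor:subtract-penalty}.

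There is essentially no obstacle: the substitution $L' = L - \L$ is well-defined because both $\L$ and $L$ are real-valued functions on $\Theta$, and the resulting $L'$ is itself a real-valued function on $\Theta$, so it is an admissible choice for Theorem~\ref{thm:arbitrary-penalty}. The one thing worth double-checking is that the existence of the penalized MLE (the implicit assumption of the setup) is preserved: since the penalty $\L$ used by the estimator is unchanged, this causes no issue. Hence the corollary follows from Theorem~\ref{thm:arbitrary-penalty} by a single algebraic substitution, which is why the paper presents it without a formal proof.
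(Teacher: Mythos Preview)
Your proposal is correct and matches the paper's approach exactly: the text immediately preceding Corollary~\ref{cor:subtract-penalty} states that ``substituting $L - \L$ as the pseudo-penalty in Theorem~\ref{thm:arbitrary-penalty} gives us a corollary that moves $\L$ out of the summation,'' which is precisely the substitution you carry out.
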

The iid data and model version is
\begin{align*}
\E D_B(P, \Pth) & \leq \r_{\Theta, \L}^{(n)}(P) + \\ & \qquad \frac{2 \log \sum_{\theta \in \Theta} e^{-\tfrac{1}{2}L(\theta)} + \E L(\th) - \E \L(\th)}{n}.
\end{align*}
We will use the term \emph{pseudo-penalty} for the function labeled $L$ in \emph{either} Theorem~\ref{thm:arbitrary-penalty} or Corollary~\ref{cor:subtract-penalty}. Note that $L$ is allowed to depend on $P$ but not on the data.

A probabilistic loss bound can also be derived for the difference between the loss and the redundancy plus pseudo-penalty.

\begin{theorem}\label{thm:probabilistic-loss-bound}
Let $X \sim P$, and let $\th$ be the penalized MLE over $\Theta$ indexing a countable model with penalty $\L$. Then for any $L: \Theta \rightarrow \R$,
\begin{align*}
& P \left\{ D_B(P, \Pth) - \left[ \log \frac{p(X)}{\pth(X)} + \L(\th) + L(\th)\right] \geq t \right\} \\ & \leq e^{-t/2} \sum_{\theta \in \Theta} e^{-\tfrac{1}{2}[\L(\theta) + L(\theta)]}.
\end{align*}
\end{theorem}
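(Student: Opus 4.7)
The plan is to mimic the structure of the proof of Theorem~\ref{thm:arbitrary-penalty}, but to apply Markov's inequality rather than Jensen's, so that we get a tail bound instead of a bound in expectation.

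First I would rewrite the quantity inside the probability using the identity that was the first step of the earlier proof:
\begin{align*}
& D_B(P, \Pth) - \log \frac{p(X)}{\pth(X)} - \L(\th) - L(\th) \\
& \qquad = 2 \log \frac{\sqrt{\pth(X)/p(X)}\, e^{-\tfrac{1}{2}[\L(\th) + L(\th)]}}{A(P, \Pth)}.
\end{align*}
Exponentiating by $1/2$ turns the event $\{D_B(P, \Pth) - \log(p(X)/\pth(X)) - \L(\th) - L(\th) \geq t\}$ into the event that the ratio on the right-hand side is at least $e^{t/2}$, a non-negative random variable.

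Next I would apply Markov's inequality, then use the same non-negativity trick that was used in the proof of Theorem~\ref{thm:arbitrary-penalty}: the random quantity corresponding to $\th$ is bounded above by the sum of the analogous quantities over all $\theta \in \Theta$. Thus
\begin{align*}
& P\{D_B(P, \Pth) - \log(p(X)/\pth(X)) - \L(\th) - L(\th) \geq t\} \\
& \quad \leq e^{-t/2}\, \E \frac{\sqrt{\pth(X)/p(X)}\, e^{-\tfrac{1}{2}[\L(\th) + L(\th)]}}{A(P, \Pth)} \\
& \quad \leq e^{-t/2}\, \E \sum_{\theta \in \Theta} \frac{\sqrt{\pt(X)/p(X)}\, e^{-\tfrac{1}{2}[\L(\theta) + L(\theta)]}}{A(P, \Pt)}.
\end{align*}

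Finally I would swap the expectation and sum (Tonelli, using non-negativity) and note that $\E_{X \sim P} \sqrt{\pt(X)/p(X)} = A(P, \Pt)$ by the definition of Hellinger affinity, which cancels the denominator term by term and leaves exactly $\sum_{\theta \in \Theta} e^{-\tfrac{1}{2}[\L(\theta) + L(\theta)]}$, yielding the stated bound.

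The only real subtlety, as in the proof of Theorem~\ref{thm:arbitrary-penalty}, is the step that replaces the random index $\th$ by a deterministic sum over $\Theta$; this is legitimate precisely because each summand is non-negative, so no absolute values or measurability pathologies arise. Everything else is just Markov's inequality and the Hellinger-affinity cancellation.
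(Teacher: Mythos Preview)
Your proposal is correct and follows essentially the same approach as the paper: rewrite the event via the identity $D_B(P,\Pth)-\log\tfrac{p(X)}{\pth(X)}-\L(\th)-L(\th)=2\log\frac{\sqrt{\pth(X)/p(X)}\,e^{-\tfrac12[\L(\th)+L(\th)]}}{A(P,\Pth)}$, apply Markov's inequality, bound the $\th$ term by the full sum over $\Theta$ using non-negativity, and use $\E_{X\sim P}\sqrt{\pt(X)/p(X)}=A(P,\Pt)$ to cancel. The paper's proof is slightly terser, collapsing your last two steps into one, but the logic is identical.
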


\begin{proof}
Following the steps described in \cite[Theorem 2.3]{barron2008}, we use Markov's inequality then bound a non-negative random variable by the sum of its possible values.
\begin{align*}
& P \left\{ D_B(P, \Pth) - \left[ \log \frac{p(X)}{\pth(X)} + \L(\th) + L(\th)\right] \geq t \right\} \\ & = P \left\{ 2 \log \frac{\sqrt{\pth(X)/p(X)}e^{-\frac{1}{2}[\L(\th) + L(\th)]}}{A(P, \Pth)} \geq t \right\}\\
 &= P \left\{ \frac{\sqrt{\pth(X)/p(X)}e^{-\frac{1}{2}[\L(\th) + L(\th)]}}{A(P, \Pth)} \geq e^{t/2} \right\}\\
 &\leq e^{-t/2} \E \frac{\sqrt{\pth(X)/p(X)}e^{-\frac{1}{2}[\L(\th) + L(\th)]}}{A(P, \Pth)}\\
 &\leq e^{-t/2} \sum_{\theta \in \Theta} e^{-\tfrac{1}{2}[\L(\theta) + L(\theta)]}.
\end{align*}
\end{proof}

For iid data $X^n \iid P$ and an iid model, Theorem~\ref{thm:probabilistic-loss-bound} implies
\begin{align*}
& P \left\{ D_B(P, \Pth) - \frac{1}{n}\left[ \sum_i \log \frac{p(X_i)}{\pth(X_i)} + \L(\th) + L(\th)\right] \geq t \right\} \\ &\leq e^{-nt/2} \sum_{\theta \in \Theta} e^{-\tfrac{1}{2}[\L(\theta) + L(\theta)]}.
\end{align*}

Several of our corollaries have $\L$ and $L$ designed to make $\sum_{\theta \in \Theta} e^{-\tfrac{1}{2}[\L(\theta) + L(\theta)]} \leq 1$. In such cases, the difference between loss and the point-wise redundancy plus pseudo-penalty is stochastically less than an exponential random variable.

Often the countable model of interest is a discretization of a continuous model. Given any $\epsilon > 0$, an \emph{$\epsilon$-discretization} of $\R^d$ is $v + \epsilon \Z^d$, by which we mean $\{ v + m \epsilon : m \in \Z^d \}$ for some $v \in \R^d$. An \emph{$\epsilon$-discretization} of $\Theta \subseteq \R^d$ is a set of the form $\Theta \cap (v + \epsilon \Z^d)$. See Section~\ref{sec:behavior-r} for a discussion of the behavior of $\r_{\Theta, \L}^{(n)}(P)$ in that context.

To derive useful consequences of the above results, we will explore some convenient choices of pseudo-penalty: zero, Bhattacharyya divergence, log reciprocal pmf of $\th$, quadratic forms, and the penalty. We specialize to the iid data and model setting for the remainder of this document to highlight the fact that many of the exact risk bounds we derive are of order $1/n$ in that case.

\subsection{Zero as pseudo-penalty}

Setting $L$ to zero gives us the traditional resolvability bound, which we review in this section.
\begin{corollary}\label{cor:set-l-zero}
Assume $X^n \iid P$, and let $\th$ be the penalized MLE over $\Theta$ indexing a countable iid model with penalty $\L$. Then
\begin{align*}
\E D_B(P, \Pth) \leq \r_{\Theta, \L}^{(n)}(P) + \frac{2 \log \sum_{\theta \in \Theta} e^{-\tfrac{1}{2}\L(\theta)}}{n}.
\end{align*}
\end{corollary}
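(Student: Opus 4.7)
The plan is to obtain this as an immediate specialization of the iid form of Theorem~\ref{thm:arbitrary-penalty}, which is stated right after its proof as
\begin{align*}
\E D_B(P, \Pth) & \leq \r_{\Theta, \L}^{(n)}(P) + \\ & \qquad \frac{2 \log \sum_{\theta \in \Theta} e^{-\tfrac{1}{2}[\L(\theta) + L(\theta)]} + \E L(\th)}{n}.
\end{align*}
The only freedom we have is in the choice of the pseudo-penalty $L:\Theta\to\R$, and since the corollary mentions no such function I would simply take $L \equiv 0$. This is permitted because the theorem holds for \emph{any} real-valued $L$, and the constant zero function is certainly such a choice.

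With this substitution, the $\E L(\th)$ term vanishes and the exponent $\L(\theta) + L(\theta)$ in the summation collapses to $\L(\theta)$, producing exactly the right-hand side
\begin{align*}
\r_{\Theta, \L}^{(n)}(P) + \frac{2 \log \sum_{\theta \in \Theta} e^{-\tfrac{1}{2}\L(\theta)}}{n}
\end{align*}
claimed in the statement. There is nothing further to verify and no obstacle to address; this is precisely why the result is labeled a Corollary rather than a Theorem, consistent with the convention announced in the introduction that corollaries are immediate consequences of earlier results and carry no separate proof.

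The only conceptual remark I would make in passing, though it is not needed for validity, is that the bound is informative only when $\sum_{\theta \in \Theta} e^{-\tfrac{1}{2}\L(\theta)} < \infty$ (a Kraft-type condition on $\tfrac{1}{2}\L$); this finiteness requirement is exactly the limitation of the classical resolvability approach that the pseudo-penalty device introduced in Theorem~\ref{thm:arbitrary-penalty} is designed to circumvent, and it motivates the nonzero choices of $L$ explored in the subsequent subsections.
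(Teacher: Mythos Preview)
Your proposal is correct and matches the paper's own derivation exactly: the text immediately preceding the corollary states ``Setting $L$ to zero gives us the traditional resolvability bound,'' and no separate proof is given. Your additional remark about the finiteness of $\sum_{\theta} e^{-\tfrac{1}{2}\L(\theta)}$ also aligns with the paper's subsequent discussion.
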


The usual statement of the resolvability bound \cite{barron2008} assumes $\L$ is at least twice a codelength function, so that it is large enough for the sum of exponential terms to be no greater than $1$. That is,
\begin{align}\label{eq:twice-kraft}
\sum_{\theta \in \Theta} e^{-\tfrac{1}{2}\L(\theta)} \leq 1
\end{align}
implies
\begin{align}\label{eq:usual}
\E D_B(P, \Pth) \leq \r_{\Theta, \L}^{(n)}(P).
\end{align}
The quantity on the right-hand side of (\ref{eq:usual}) is called the \emph{index of resolvability} of $(\Theta, \L)$ for $P$ at sample size $n$. Any corresponding minimizer $\ts \in \Theta$ is considered to index an average-case optimal representative for $P$ at sample size $n$.

In fact, for any finite sum $z := \sum_{\theta \in \Theta} e^{-\tfrac{1}{2}\L(\theta)}$, the maximizer of the penalized likelihood is also the maximizer with penalty $\tilde{\L} := \L + 2 \log z$. Thus one has a resolvability bound of the form (\ref{eq:usual}) with the equivalent penalty $\tilde{\L}$, which satisfies (\ref{eq:twice-kraft}) with equality.

Additionally, the resolvability bounds give an exact upper bound on the minimax risk for any model $\Theta$ that can be covered by finitely many relative entropy balls of radius $\epsilon^2$; the log of the minimal covering number is called the \emph{KL-metric entropy} $\m(\epsilon)$. These balls' center points are called a \emph{KL-net}; we will denote the net by $\Te$. With data $X^n \iid \Pts$ for any $\ts \in \Theta$, the MLE restricted to $\Te$ has the resolvability risk bound
\begin{align*}
\E D_B(\Pts, \Pth) &\leq \inf_{\theta \in \Te} \left\{ D(\Pts \| \Pt) + \frac{2 \m(\epsilon)}{n} \right\}\\
 &= \inf_{\theta \in \Te} D(\Pts \| \Pt) + \frac{2 \m(\epsilon)}{n}\\
 &\leq \epsilon^2 + \frac{2 \m(\epsilon)}{n}.
\end{align*}
If an explicit bound for $\m(\epsilon)$ is known, then the overall risk bound can be optimized over the radius $\epsilon$ --- see for instance \cite[Section 1.5]{barron2008}.

Because this approach to upper bounding minimax risk requires twice-Kraft-valid codelengths, it only applies to models that can be covered by finitely many relative entropy balls. However, Corollary~\ref{cor:subtract-penalty} reveals new possibilities for establishing minimax upper bounds even if the cover is infinite. Given any $L$, one can use any constant penalty that is at least as large as $2 \log \sum e^{-\tfrac{1}{2}L(\theta)} + \E L(\th)$ where $\th$ is the unpenalized MLE on the net and the summation is taken over those points.\footnote{Putting $\L = 0$ into either Theorem~\ref{thm:arbitrary-penalty} or Corollary~\ref{cor:subtract-penalty} would give us the same idea.} For a minimax result, one still needs this quantity to be uniformly bounded over all data-generating distribution $\ts \in \Theta$. See Corollary~\ref{cor:exponential-affinity-guassian-minimax} below as an example.

\subsection{Bhattacharyya divergence as pseudo-penalty}\label{sec:bhattacharyya-pseudo-penalty}

Important corollaries\footnote{Our Corollary~\ref{cor:bhattacharyya-pseudo-penalty} was inspired by the very closely related result of \cite[Thm 4.2]{zhang2006}.} to Theorems~\ref{thm:arbitrary-penalty} and~\ref{cor:subtract-penalty} come from setting the pseudo-penalty equal to $\alpha D_B(P, \Pt)$; the expected pseudo-penalty is proportional to the risk, so that term can be subtracted from both sides. For the iid scenario, we also use the product property of Hellinger affinity: $A(P^n, \Pt^n) = A(P, \Pt)^n$.

The following corollaries serve as the starting point for the main bounds in Theorems \ref{thm:gaussian-power-any-sample-size} and \ref{thm:gaussian-bound-center-power-bound-tail}, after which, more refined techniques are used in controlling the two terms in \eqref{eq:B1} and \eqref{eq:B2}.
\begin{corollary}\label{cor:bhattacharyya-pseudo-penalty}
Assume $X^n \iid P$, and let $\th$ be the penalized MLE over $\Theta$ indexing a countable iid model with penalty $\L$. Then for any $\alpha \in [0, 1]$,
\begin{align}
& \E D_B(P, \Pth) \nonumber \\ & \leq \frac{1}{1-\alpha} \left[ \r_{\Theta, \L}^{(n)}(P) + \frac{2 \log \sum_{\theta \in \Theta} e^{-\tfrac{1}{2}\L(\theta)} A(P, \Pt)^{\alpha n}}{n} \right]. \label{eq:B1}
\end{align}
\end{corollary}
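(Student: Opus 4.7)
The plan is to apply the iid form of Theorem~\ref{thm:arbitrary-penalty} with the specific pseudo-penalty $L(\theta) := \alpha\, n\, D_B(P, \Pt)$ and then rearrange, exploiting the fact that this choice makes $\E L(\th)$ proportional to the left-hand side so it can be absorbed.

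First I would recall the iid version of Theorem~\ref{thm:arbitrary-penalty} displayed immediately after its proof,
\begin{align*}
\E D_B(P, \Pth) \leq \r_{\Theta, \L}^{(n)}(P) + \frac{2 \log \sum_{\theta \in \Theta} e^{-\tfrac{1}{2}[\L(\theta) + L(\theta)]} + \E L(\th)}{n},
\end{align*}
which is valid for any $L : \Theta \to \R$. Plugging in $L(\theta) = \alpha n D_B(P, \Pt)$, the key computation is that $e^{-\tfrac{1}{2} L(\theta)} = e^{-\alpha n \log (1/A(P, \Pt))} = A(P, \Pt)^{\alpha n}$ by the definition $D_B(P, Q) = 2 \log (1/A(P, Q))$; equivalently, one can view this as $e^{-\tfrac{\alpha}{2} D_B(P^n, \Pt^n)}$ using the product property $A(P^n, \Pt^n) = A(P, \Pt)^n$. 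So the sum inside the logarithm becomes exactly $\sum_{\theta \in \Theta} e^{-\tfrac{1}{2}\L(\theta)} A(P, \Pt)^{\alpha n}$, matching the expression in \eqref{eq:B1}.

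Next I would handle the $\E L(\th)$ term. Since $L(\th) = \alpha n D_B(P, \Pth)$, we get $\E L(\th)/n = \alpha \E D_B(P, \Pth)$, which is $\alpha$ times the very quantity being bounded. Substituting yields
\begin{align*}
\E D_B(P, \Pth) \leq \r_{\Theta, \L}^{(n)}(P) + \frac{2 \log \sum_{\theta \in \Theta} e^{-\tfrac{1}{2}\L(\theta)} A(P, \Pt)^{\alpha n}}{n} + \alpha \E D_B(P, \Pth),
\end{align*}
and the claim follows by moving the $\alpha \E D_B(P, \Pth)$ term to the left and dividing by $1 - \alpha$ (for $\alpha < 1$; the $\alpha = 1$ case is interpreted as the limit, or we simply restrict to $\alpha \in [0,1)$ and note the left factor $1/(1-\alpha)$ requires this anyway). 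The division is legitimate because both sides of the rearranged inequality remain finite under the implicit existence assumption on $\th$.

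There is no real obstacle here: the result is essentially a repackaging of Theorem~\ref{thm:arbitrary-penalty} with a clever $P$-dependent choice of pseudo-penalty (allowed, since $L$ is permitted to depend on $P$ but not on the data). The only conceptual step worth flagging is recognizing that choosing $L$ proportional to $D_B(P, \Pt)$ is what lets the expected pseudo-penalty ``feed back'' into the risk on the left-hand side; the $1/(1-\alpha)$ factor is the cost of this self-bounding trick.
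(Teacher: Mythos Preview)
Your proposal is correct and follows exactly the approach the paper itself describes: set the pseudo-penalty to $\alpha D_B(P^n,\Pt^n)=\alpha n D_B(P,\Pt)$ in the iid form of Theorem~\ref{thm:arbitrary-penalty}, use $e^{-L(\theta)/2}=A(P,\Pt)^{\alpha n}$ via the product property of Hellinger affinity, and absorb $\E L(\th)/n=\alpha\,\E D_B(P,\Pth)$ into the left-hand side before dividing by $1-\alpha$. Your remark about the degenerate case $\alpha=1$ is also appropriate.
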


\begin{corollary}\label{cor:bhattacharyya-minus-penalty-pseudo-penalty}
Assume $X^n \iid P$, and let $\th$ be the penalized MLE over $\Theta$ indexing a countable iid model with penalty $\L$. Then for any $\alpha \in [0, 1]$,
\begin{align}
& \E D_B(P, \Pth) \nonumber \\ & \leq \frac{1}{1-\alpha} \left[ \r_{\Theta, \L}^{(n)}(P) + \frac{2 \log \sum_{\theta \in \Theta} A(P, \Pt)^{\alpha n} - \E \L(\th)}{n} \right]. \label{eq:B2}
\end{align}
\end{corollary}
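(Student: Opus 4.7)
The strategy is to instantiate the iid form of Corollary~\ref{cor:subtract-penalty} with a carefully chosen pseudo-penalty, then absorb one term back into the left-hand side. Specifically, I would set
\begin{align*}
L(\theta) := \alpha \, D_B(P^n, \Pt^n) = \alpha n D_B(P, \Pt) = -2 \alpha n \log A(P, \Pt).
\end{align*}
This $L$ depends only on $\theta$ (through $\Pt$) and on $P$, which is permitted by the remark following Corollary~\ref{cor:subtract-penalty}. The key point is that under this choice, the two $L$-dependent quantities in the corollary take very clean forms: $e^{-\frac{1}{2}L(\theta)} = A(P, \Pt)^{\alpha n}$, and $\E L(\th) = \alpha n \, \E D_B(P, \Pth)$.

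Plugging these into the iid version of Corollary~\ref{cor:subtract-penalty} gives
\begin{align*}
\E D_B(P, \Pth) \leq \r_{\Theta, \L}^{(n)}(P) + \frac{2 \log \sum_{\theta \in \Theta} A(P, \Pt)^{\alpha n} + \alpha n \, \E D_B(P, \Pth) - \E \L(\th)}{n}.
\end{align*}
The $\alpha n \, \E D_B(P, \Pth)/n = \alpha \, \E D_B(P, \Pth)$ term on the right is of the same form as the left-hand side, so subtracting it from both sides produces $(1-\alpha) \, \E D_B(P, \Pth)$ on the left. Dividing through by $1 - \alpha$ (for $\alpha \in [0, 1)$) yields exactly the claimed inequality~\eqref{eq:B2}. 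For $\alpha = 1$ the right-hand side is $+\infty$ and the bound is trivial, or can be obtained by taking a limit.

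The only real obstacle is the legitimacy of rearranging the $\E D_B(P, \Pth)$ term across the inequality, which requires $\E D_B(P, \Pth)$ to be finite; if it is infinite, then the original form of the bound already furnishes an inequality $\infty \leq \infty + \cdots$ and we may use the convention that the asserted inequality holds vacuously, or prove the finite case and then argue that the infinite case follows by applying the bound to a truncation of $\Theta$ and monotone convergence. Beyond that, the derivation is a direct algebraic consequence of the earlier corollary together with the product identity $A(P^n, \Pt^n) = A(P, \Pt)^n$.
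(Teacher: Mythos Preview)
Your proposal is correct and follows exactly the approach the paper indicates: set the pseudo-penalty in Corollary~\ref{cor:subtract-penalty} to $L(\theta)=\alpha D_B(P^n,\Pt^n)=\alpha n D_B(P,\Pt)$, use $A(P^n,\Pt^n)=A(P,\Pt)^n$ to simplify the log-sum, and subtract the resulting $\alpha\,\E D_B(P,\Pth)$ term from both sides before dividing by $1-\alpha$. The paper treats this as an immediate consequence and omits a formal proof; your handling of the $\alpha=1$ and infinite-risk edge cases is more careful than what the paper records.
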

For simplicity, the corollaries throughout this subsection will use $\alpha = 1/2$.

Consider a penalized MLE selected from an $\epsilon$-discretization of a continuous parameter space; as the sample size increases, one typically wants to shrink $\epsilon$ to make the grid more refined (see Section~\ref{sec:behavior-r}). Examining Corollaries~\ref{cor:bhattacharyya-pseudo-penalty} and~\ref{cor:bhattacharyya-minus-penalty-pseudo-penalty}, we see two opposing forces at work as $n$ increases: the grid-points themselves proliferate, while the $n$th power depresses the terms in the summation. An easy case occurs when $A(P, \Pt)$ is bounded by a Gaussian-shaped curve; we apply Corollary~\ref{cor:bhattacharyya-minus-penalty-pseudo-penalty} and invoke Lemma~\ref{lem:gaussian-summation-off-center}.
\begin{corollary}\label{cor:affinity-gaussian-decay}
Assume $X^n \iid P$, and let $\th$ be the penalized MLE over an $\epsilon$-discretization $\Te \subseteq \Theta \subseteq \R^d$ indexing an iid model with penalty $\L$. Assume $A(P, \Pt) \leq e^{-c \| \theta - \ts \|^2}$ for some $c > 0$ and some $\ts \in \Theta$. Then
\begin{align*}
& \E D_B(P, \Pth) \\ & \leq 2 \left[ \r_{\Te, \L}^{(n)}(P) + \frac{2d \log (1 + \tfrac{2\sqrt{2\pi}}{\epsilon \sqrt{n c}}) - \E \L(\th)}{n} \right].
\end{align*}
\end{corollary}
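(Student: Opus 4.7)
The plan is to instantiate Corollary~\ref{cor:bhattacharyya-minus-penalty-pseudo-penalty} over $\Te$ with $\alpha = 1/2$ and then control the Gaussian lattice sum that appears inside the logarithm. Plugging $\alpha = 1/2$ into the iid form of Corollary~\ref{cor:bhattacharyya-minus-penalty-pseudo-penalty} gives
\begin{align*}
\E D_B(P, \Pth) \leq 2 \left[ \r_{\Te, \L}^{(n)}(P) + \frac{2 \log \sum_{\theta \in \Te} A(P, \Pt)^{n/2} - \E \L(\th)}{n} \right],
\end{align*}
so the entire task reduces to bounding $\sum_{\theta \in \Te} A(P, \Pt)^{n/2}$.

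Next I would invoke the Gaussian-decay hypothesis $A(P, \Pt) \leq e^{-c\|\theta - \ts\|^2}$ raised to the $n/2$ power, yielding the pointwise bound $A(P, \Pt)^{n/2} \leq e^{-(nc/2)\|\theta - \ts\|^2}$. Since every summand is non-negative, I can enlarge the sum from the discretization $\Te = \Theta \cap (v + \epsilon \Z^d)$ to the full lattice $v + \epsilon \Z^d$. Writing a generic lattice point as $\theta = v + \epsilon m$ with $m \in \Z^d$ and expanding $\|\theta - \ts\|^2$ coordinatewise, the sum factors into a product of $d$ univariate sums,
\begin{align*}
\sum_{\theta \in v + \epsilon \Z^d} e^{-(nc/2)\|\theta - \ts\|^2} = \prod_{j=1}^d \sum_{k \in \Z} e^{-(nc/2)(\epsilon k + v_j - \ts_j)^2}.
\end{align*}

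At this point I apply Lemma~\ref{lem:gaussian-summation-off-center} to each one-dimensional sum; that lemma is exactly designed to bound a Gaussian sum on $\epsilon \Z$ centered at an arbitrary offset, and with variance parameter $a = nc/2$ it produces the factor $1 + \tfrac{2\sqrt{2\pi}}{\epsilon \sqrt{nc}}$ per coordinate. Taking the logarithm turns the $d$-fold product into $d \log(1 + \tfrac{2\sqrt{2\pi}}{\epsilon \sqrt{nc}})$, which inserted into the displayed inequality gives precisely the stated bound. The only non-routine ingredient is the univariate lemma controlling a shifted Gaussian sum on a grid uniformly in the offset; everything else is bookkeeping, so I expect no real obstacle beyond applying that lemma carefully to a lattice whose phase $v - \ts$ is not assumed to align with any particular grid point.
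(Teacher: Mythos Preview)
Your proposal is correct and matches the paper's approach exactly: apply Corollary~\ref{cor:bhattacharyya-minus-penalty-pseudo-penalty} with $\alpha=1/2$, use the assumed Gaussian bound on $A(P,\Pt)$, and invoke Lemma~\ref{lem:gaussian-summation-off-center} with constant $nc/2$ to get the $(1+\tfrac{2\sqrt{2\pi}}{\epsilon\sqrt{nc}})^d$ factor. The only cosmetic difference is that Lemma~\ref{lem:gaussian-summation-off-center} is already stated for $\R^d$, so you can cite it once rather than factorizing into $d$ one-dimensional sums yourself.
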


With $\epsilon$ proportional to $1/\sqrt{n}$, our bound on the summation of Hellinger affinities is stable. Corollary~\ref{cor:affinity-gaussian-decay-concrete} sets $\L = 0$ to demonstrate a more concrete instantiation of this result.
\begin{corollary}\label{cor:affinity-gaussian-decay-concrete}
Assume $X^n \iid P$, and let $\th$ be the MLE over an $\epsilon$-discretization $\Te \subseteq \Theta \subseteq \R^d$ indexing an iid model using $\epsilon = \sqrt{2/n}$. Assume $A(P, \Pt) \leq e^{-c \| \theta - \ts \|^2}$ for some $c > 0$ and some $\ts \in \Theta$. Then
\begin{align*}
\E D_B(P, \Pth) \leq 2 D(P \| \Te) + \frac{4d \log (1 + 4/\sqrt{c})}{n}.
\end{align*}
\end{corollary}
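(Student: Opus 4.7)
The result is just the specialization of Corollary~\ref{cor:affinity-gaussian-decay} to $\L \equiv 0$ and $\epsilon = \sqrt{2/n}$, combined with a loose numerical estimate on a single constant. The plan is therefore to plug in these choices and track the three terms of the general bound.

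First I would observe that with $\L \equiv 0$ the penalized MLE reduces to the ordinary MLE on $\Te$, the $\L(\theta)/n$ summand inside $\r^{(n)}_{\Te, \L}(P)$ disappears, and $\E \L(\th) = 0$. Consequently the redundancy rate simplifies to
\begin{align*}
\r^{(n)}_{\Te, 0}(P) = \inf_{\theta \in \Te} D(P \| \Pt) = D(P \| \Te),
\end{align*}
which, after multiplication by the leading factor of $2$, produces the $2 D(P \| \Te)$ term in the target bound.

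Next I would substitute $\epsilon = \sqrt{2/n}$ into the logarithmic factor. The product $\epsilon \sqrt{nc}$ collapses to $\sqrt{2c}$, so
\begin{align*}
\frac{2\sqrt{2\pi}}{\epsilon\sqrt{nc}} = \frac{2\sqrt{2\pi}}{\sqrt{2c}} = \frac{2\sqrt{\pi}}{\sqrt{c}}.
\end{align*}
Since $2\sqrt{\pi} < 4$, this quantity is bounded above by $4/\sqrt{c}$, and monotonicity of $\log(1+\cdot)$ upgrades the $\log(1 + 2\sqrt{\pi}/\sqrt{c})$ that falls out of Corollary~\ref{cor:affinity-gaussian-decay} to the cleaner $\log(1 + 4/\sqrt{c})$ displayed in the statement. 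Multiplying by $4d/n$ and combining with the first piece yields the claimed inequality.

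There is essentially no obstacle here; all of the real work has already been done in the general bound of Corollary~\ref{cor:affinity-gaussian-decay} (and in the Gaussian-summation estimate of Lemma~\ref{lem:gaussian-summation-off-center} that feeds it). The only modestly substantive step is recognizing that the Gaussian-normalization constant $2\sqrt{\pi}$ is slightly less than the rounder constant $4$, which allows the bound to be stated without any irrational quantity beyond $\sqrt{c}$ itself.
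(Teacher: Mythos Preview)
Your proposal is correct and matches the paper's intended derivation: the paper explicitly introduces this corollary as ``set[ting] $\L = 0$ to demonstrate a more concrete instantiation'' of Corollary~\ref{cor:affinity-gaussian-decay}, and provides no separate formal proof. Your tracking of the three terms and the numerical bound $2\sqrt{\pi} < 4$ is exactly what is needed.
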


If $P$ is $\Pts$ in an exponential family with natural parameter indexed by $\Theta$, then Hellinger affinities do have a Gaussian-shaped bound as long as the minimum eigenvalue of the sufficient statistic's covariance matrix is uniformly bounded below by a positive number. We use the notation $\lambda_j(\cdot)$ for the $j$th largest eigenvalue of the matrix argument.
\begin{lemma}\label{lem:exponential-affinity-guassian-bound}
Let $\{ \Pt : \theta \in \Theta \subseteq \R^d \}$ be an exponential family with natural parameter $\theta$ and sufficient statistic $\phi$. Then
\begin{align*}
A(\Pts, \Pt) &\leq e^{-c \| \theta - \ts \|^2},
\end{align*}
where $c := \frac{1}{8} \inf_{\tt \in \Theta} \lambda_d (\C_{X \sim P_{\tt}} \phi(X))$.
\end{lemma}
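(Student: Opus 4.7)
The plan is to exploit the closed-form expression for the Hellinger affinity in an exponential family and reduce the bound to controlling the Jensen gap of the log-partition function.

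Writing the density as $\pt(x) = h(x) \exp(\theta^T \phi(x) - \Lambda(\theta))$ with log-partition function $\Lambda$, a direct integration gives
\begin{align*}
A(\Pts, \Pt)
 &= e^{-\tfrac{1}{2}\Lambda(\ts) - \tfrac{1}{2}\Lambda(\theta)} \int h(x) \exp\left( \tfrac{1}{2}(\theta + \ts)^T \phi(x) \right) dx \\
 &= \exp\left( \Lambda\left( \tfrac{\theta + \ts}{2} \right) - \tfrac{1}{2}\Lambda(\ts) - \tfrac{1}{2}\Lambda(\theta) \right),
\end{align*}
so it suffices to establish the Jensen-gap inequality
\begin{align*}
\Lambda(\theta) + \Lambda(\ts) - 2\Lambda\left( \tfrac{\theta + \ts}{2} \right) \geq \tfrac{1}{4} c_0 \, \| \theta - \ts \|^2,
\end{align*}
where $c_0 := \inf_{\tt \in \Theta} \lambda_d(\C_{X \sim \Ptt} \phi(X))$. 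By the standard exponential family identity $\nabla^2 \Lambda(\tt) = \C_{X \sim \Ptt} \phi(X)$, the hypothesis on the eigenvalues yields $\nabla^2 \Lambda(\tt) \succeq c_0 I$ for all $\tt \in \Theta$.

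Setting $m := (\theta + \ts)/2$ and $u := \ts - m = (\ts - \theta)/2$, apply Taylor's theorem with integral remainder to $\Lambda(m \pm u)$. The first-order terms have opposite sign and cancel when summed, while the quadratic remainders combine (since $(-u)^T H (-u) = u^T H u$) to give
\begin{align*}
\Lambda(\ts) + \Lambda(\theta) - 2\Lambda(m)
 = \int_0^1 (1-s)\, u^T \bigl[ \nabla^2 \Lambda(m + s u) + \nabla^2 \Lambda(m - s u) \bigr] u \, ds.
\end{align*}
Applying the bound $\nabla^2 \Lambda \succeq c_0 I$ to each Hessian and using $\int_0^1 (1-s)\, ds = 1/2$ produces the lower bound $c_0 \|u\|^2 = (c_0/4) \| \theta - \ts \|^2$. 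Substituting into the affinity formula and taking $-\tfrac{1}{2}$ times the log yields $A(\Pts, \Pt) \leq \exp(-(c_0/8)\|\theta - \ts\|^2)$, which is the stated bound with $c = c_0/8$.

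The only non-routine point is justifying that the Hessian bound applies along the entire segment from $\ts$ to $\theta$, which is needed for the Taylor remainder argument. This is immediate when $\Theta$ is taken to be the (convex) natural parameter space of the family; more generally, one invokes the fact that the natural domain of $\Lambda$ is convex and contains $\Theta$, and that the covariance-Hessian identity persists on its interior, so the lower bound $c_0 I$ extends to the whole segment. Everything else is bookkeeping with the integral remainder.
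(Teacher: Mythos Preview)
Your proof is correct and follows essentially the same route as the paper: both compute the exponential-family affinity as $\exp\bigl(\psi\bigl(\tfrac{\theta+\ts}{2}\bigr)-\tfrac{1}{2}\psi(\ts)-\tfrac{1}{2}\psi(\theta)\bigr)$ and then lower-bound the Jensen gap of the log-partition function via the Hessian identity $\nabla\nabla'\psi=\C\phi$. The only cosmetic difference is that the paper invokes its general Jensen-difference lemma (Lemma~\ref{lem:jensen-difference-bounds}, proved with Lagrange remainder) applied to the two-point distribution on $\{\theta,\ts\}$, whereas you carry out the Taylor expansion with integral remainder inline; the resulting constant $c_0/8$ is identical.
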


In Lemma~\ref{lem:exponential-affinity-guassian-bound}, $c$ does not depend on $\ts$. If in addition the $\epsilon$-discretization is also a KL-net, then the risk of the estimator described in Corollary~\ref{cor:affinity-gaussian-decay-concrete} is uniformly bounded over data-generating distributions in $\Theta$. The minimax risk is no greater than the supremum risk of this particular estimator.
\begin{corollary}\label{cor:exponential-affinity-guassian-minimax}
Let $\Theta \subseteq \R^d$ index a set of distributions. Assume that for some $c > 0$, every $\ts \in \Theta$ has the property that $A(\Pts, \Pt) \leq e^{-c \| \theta - \ts \|^2}$. Assume further that there exists $\beta > 0$ such that for all $\epsilon > 0$, every $\epsilon$-discretization $\Te \subseteq \Theta$ is also a KL-net with balls of radius $\beta \epsilon^2$. Then the minimax Bhattacharyya risk of $\Theta$ has the upper bound
\begin{align*}
\min_{\th} \max_{\ts \in \Theta} \E_{X^n \iid \Pts} D_B(\Pts, \Pth) \leq \frac{4[\beta + d\log (1 + 4/\sqrt{c})]}{n}.
\end{align*}
\end{corollary}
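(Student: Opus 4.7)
The plan is to combine Corollary~\ref{cor:affinity-gaussian-decay-concrete} with the KL-net hypothesis, for a single estimator that does not depend on the unknown data-generating parameter. Since the minimax risk is the infimum over estimators of the worst-case expected loss, bounding the worst-case risk of one explicit estimator yields a minimax upper bound. The natural candidate estimator is exactly the one featured in Corollary~\ref{cor:affinity-gaussian-decay-concrete}: the (unpenalized) MLE $\th$ over an $\epsilon$-discretization $\Te \subseteq \Theta$ with $\epsilon = \sqrt{2/n}$.

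Next, I would fix an arbitrary $\ts \in \Theta$ as the true parameter, take $P = \Pts$, and check that the hypotheses of Corollary~\ref{cor:affinity-gaussian-decay-concrete} are satisfied. The Gaussian-shaped affinity bound $A(\Pts, \Pt) \leq e^{-c \| \theta - \ts \|^2}$ is assumed to hold for every $\ts \in \Theta$ with the same constant $c$, so the corollary applies and gives
\begin{align*}
\E_{X^n \iid \Pts} D_B(\Pts, \Pth) \leq 2 D(\Pts \| \Te) + \frac{4d \log(1 + 4/\sqrt{c})}{n}.
\end{align*}

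Third, I would invoke the KL-net assumption to control $D(\Pts \| \Te)$. With $\epsilon = \sqrt{2/n}$, the discretization $\Te$ is a KL-net with balls of radius $\beta \epsilon^2 = 2\beta/n$, so there exists some $\theta' \in \Te$ with $D(\Pts \| P_{\theta'}) \leq 2\beta/n$, hence $D(\Pts \| \Te) \leq 2\beta/n$. Substituting this into the displayed bound yields
\begin{align*}
\E_{X^n \iid \Pts} D_B(\Pts, \Pth) \leq \frac{4\beta}{n} + \frac{4d \log(1 + 4/\sqrt{c})}{n} = \frac{4[\beta + d \log(1 + 4/\sqrt{c})]}{n}.
\end{align*}

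Finally, since the right-hand side does not depend on $\ts$, taking the supremum over $\ts \in \Theta$ preserves the bound, and the infimum over estimators of this worst-case risk is therefore no larger than this same number, giving the claimed minimax bound. There is no real obstacle here; the proof is pure assembly, and the only thing worth checking carefully is that the constants $c$ (from the affinity bound) and $\beta$ (from the KL-net radius) are genuinely uniform over $\ts \in \Theta$, which is exactly what the hypotheses assert.
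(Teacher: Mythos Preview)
Your proposal is correct and matches the paper's intended derivation exactly: the paper states (in the text immediately preceding the corollary) that the result follows because the risk of the estimator from Corollary~\ref{cor:affinity-gaussian-decay-concrete} is uniformly bounded over $\ts \in \Theta$ once the KL-net assumption controls $D(\Pts \| \Te)$, and the minimax risk is bounded by the supremum risk of this particular estimator. Your arithmetic with $\epsilon = \sqrt{2/n}$ giving $D(\Pts \| \Te) \leq 2\beta/n$ and hence the factor $4\beta/n$ is precisely what the paper intends.
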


In general, however, Hellinger affinity being uniformly bounded by a Gaussian curve may be too severe of a requirement. A weaker condition is to require only a power decay for $\theta$ far from some $\ts$.
\begin{theorem}\label{thm:gaussian-bound-center-power-bound-tail}
Assume $X^n \iid P$, and let $\th$ be the penalized MLE over an $\epsilon$-discretization $\Te \subseteq \Theta \subseteq \R^d$ indexing an iid model with penalty $\L$. Assume that for some $\ts, \tt^* \in \Theta$, radius $R$ and $a, c > 0$, the Hellinger affinity $A(P, \Pt)$ is bounded by $a/ \| \theta - \ts \|^b$ outside the ball $B(\ts, R)$ and bounded by $e^{-c \| \theta - \tt^* \|^2}$ inside the ball. If $R \geq 11a^{1/b} \vee 3\epsilon$, and $n \geq 2(d+1)/b$, then,
\begin{align*}
& \E D_B(P, \Pth) \\ & \leq 2 \Biggl[ \r_{\Te, \L}^{(n)}(P) + \\ & \qquad \frac{d[2 \log (1 + \frac{2\sqrt{2\pi}}{\epsilon\sqrt{n c}}) + 2\log(1 +\frac{4\sqrt{2}R}{\epsilon \sqrt{n b }}) ] + 3 - \E \L(\th)}{n} \Biggr].
\end{align*}
\end{theorem}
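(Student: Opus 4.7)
The plan is to reduce to bounding $\sum_{\theta \in \Te} A(P, \Pt)^{n/2}$ via Corollary~\ref{cor:bhattacharyya-minus-penalty-pseudo-penalty} with $\alpha = 1/2$, and then split this sum according to the two regions where we have different affinity bounds. Specifically, applying that corollary gives
\begin{align*}
\E D_B(P, \Pth) \leq 2 \Bigl[ \r_{\Te, \L}^{(n)}(P) + \tfrac{2\log\sum_{\theta \in \Te} A(P,\Pt)^{n/2} - \E \L(\th)}{n} \Bigr],
\end{align*}
so it suffices to prove $2 \log \sum_{\theta \in \Te} A(P,\Pt)^{n/2} \leq 2d \log(1 + \tfrac{2\sqrt{2\pi}}{\epsilon\sqrt{nc}}) + 2d \log(1 + \tfrac{4\sqrt{2}R}{\epsilon\sqrt{nb}}) + 3$. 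Write $S = S_{\text{in}} + S_{\text{out}}$, where $S_{\text{in}}$ sums over $\Te \cap B(\ts, R)$ and $S_{\text{out}}$ over $\Te \setminus B(\ts, R)$.

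For $S_{\text{in}}$, I would insert the Gaussian affinity bound to get $A(P,\Pt)^{n/2} \leq e^{-(nc/2)\|\theta - \tt^*\|^2}$, extend the sum to all of $\Te$ (terms are non-negative), and apply the off-center Gaussian summation lemma (Lemma~\ref{lem:gaussian-summation-off-center}) to obtain $S_{\text{in}} \leq (1 + \tfrac{2\sqrt{2\pi}}{\epsilon\sqrt{nc}})^d$. This matches the first factor of the target cleanly and reuses exactly the machinery from Corollary~\ref{cor:affinity-gaussian-decay}.

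For $S_{\text{out}}$, I would use the power decay to write $S_{\text{out}} \leq a^{n/2} \sum_{\theta \in \Te,\, \|\theta - \ts\| > R} \|\theta - \ts\|^{-bn/2}$, and bound the grid sum by an integral: each grid point carries an $\epsilon$-cube, so
\begin{align*}
S_{\text{out}} \leq \frac{a^{n/2} \omega_{d-1}}{\epsilon^d} \int_{R'}^{\infty} r^{-bn/2 + d - 1} \d r = \frac{a^{n/2}\, \omega_{d-1}}{\epsilon^d} \cdot \frac{(R')^{d - bn/2}}{bn/2 - d},
\end{align*}
with $R' = R - \tfrac{\epsilon\sqrt{d}}{2}$. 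The hypothesis $n \geq 2(d+1)/b$ gives $bn/2 - d \geq 1$, so the integral converges and the denominator is harmless; the hypothesis $R \geq 3\epsilon$ keeps $R'$ comparable to $R$; and the hypothesis $R \geq 11 a^{1/b}$ gives $a/(R')^b \leq (\text{small})$, making $(a/(R')^b)^{n/2}$ exponentially small in $n$ and absorbing numerical constants (together with $\omega_{d-1}/\Gamma$ terms) into a single additive constant. Rewriting $a^{n/2}(R')^{d-bn/2}/\epsilon^d = (R'/\epsilon)^d (a/(R')^b)^{n/2}$ and massaging $(R'/\epsilon)^d$ into the form $(1 + \tfrac{4\sqrt{2}R}{\epsilon\sqrt{nb}})^d$ up to a multiplicative constant of the form $e^{O(1)}$ is the crux — the constants $4\sqrt{2}$ and $3$ are calibrated so that the leftover slack matches the final bound.

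Finally, I would combine the two contributions via the elementary inequality $\log(S_{\text{in}} + S_{\text{out}}) \leq \log 2 + \log S_{\text{in}}^{\vee 1} + \log S_{\text{out}}^{\vee 1}$ (or by AM--GM), absorbing the $\log 2$ and the constants from the previous paragraph into the $3$ on the right-hand side, then multiplying by $2$ and dividing by $n$. The main obstacle is step three: the outer-ball calculation requires simultaneously invoking \emph{all three} hypotheses ($R \geq 11 a^{1/b}$, $R \geq 3\epsilon$, $n \geq 2(d+1)/b$) in a balanced way so that (i) the integral tail converges, (ii) the discretization correction $R'$ stays comparable to $R$, and (iii) every leftover numerical factor — including $\omega_{d-1}$, the denominator $bn/2 - d$, and powers of $11^{-b}$ — can be bundled into the single constant $3$ while the $R$-dependence gets packaged as $(1 + \tfrac{4\sqrt{2}R}{\epsilon\sqrt{nb}})^d$.
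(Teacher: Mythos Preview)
Your overall architecture matches the paper's proof exactly: start from Corollary~\ref{cor:bhattacharyya-minus-penalty-pseudo-penalty} with $\alpha=1/2$, split the affinity sum into $S_{\text{in}}$ and $S_{\text{out}}$, handle $S_{\text{in}}$ via Lemma~\ref{lem:gaussian-summation-off-center}, handle $S_{\text{out}}$ by an integral comparison on the power tail, and combine logs at the end (the paper uses Lemma~\ref{lem:log-sum} with $K=2$, giving the additive $2\log 2$ that becomes the ``$+3$'' after doubling). Your treatment of $S_{\text{in}}$ and the final recombination are fine.

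The gap is in your treatment of $S_{\text{out}}$. You write the tail as $(R'/\epsilon)^d\,(a/(R')^b)^{n/2}$ (after dividing out $bn/2-d\ge 1$) and then propose ``massaging $(R'/\epsilon)^d$ into the form $(1+\tfrac{4\sqrt{2}R}{\epsilon\sqrt{nb}})^d$ up to a multiplicative constant of the form $e^{O(1)}$.'' That cannot be done to the factor $(R'/\epsilon)^d$ by itself: the ratio of $(R'/\epsilon)^d$ to $(R/(\epsilon\sqrt{nb}))^d$ is essentially $(\sqrt{nb})^d$, which is \emph{not} $e^{O(1)}$ and cannot be absorbed into the ``$+3$''. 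The exponential smallness of $(a/(R')^b)^{n/2}$ is not just there to mop up numerical constants; it must exactly cancel this $(\sqrt{nb})^d$, and whether it does depends delicately on the relation between $n$, $d$, and $b$. The paper's Lemma~\ref{lem:power-decay-without-epsilon} supplies the missing idea: rewrite the tail bound as
\[
\Bigl(\tfrac{\text{const}\cdot R}{\epsilon\sqrt{n d}}\Bigr)^d \cdot \Bigl[\,n^{d/2}\,\bigl(\tfrac{4a^{1/b}}{R}\bigr)^{\alpha b n}\Bigr],
\]
and bound the bracketed factor $n^{d/2}\beta^{n}$ by its maximum over $n>0$, which is $(d/(2e\alpha b\log(R/4a^{1/b})))^{d/2}$. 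This is what produces the $\sqrt{n\alpha b\log(R/4a^{1/b})}$ in the denominator; the hypothesis $R\ge 11a^{1/b}$ then gives $\log(R/4a^{1/b})\ge 1$ and yields the clean $(1+\tfrac{4\sqrt{2}R}{\epsilon\sqrt{nb}})^d$. Once you insert that optimization step, your sketch becomes the paper's proof.
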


\begin{proof}
The part of the summation where Hellinger affinity is bounded by a Gaussian curve has the same bound as in Corollary~\ref{cor:affinity-gaussian-decay}, which is a direct consequence of Lemma~\ref{lem:gaussian-summation-off-center}.
\begin{align}\label{eq:gaussian-center}
\sum_{\theta \in \Te \cap B(\ts, R)} A(P, \Pt)^{\alpha n} &\leq \sum_{\theta \in \Te \cap B(\ts, R)} e^{-c \alpha n \| \theta - \tt^* \|^2}\nonumber\\
 &\leq \sum_{\theta \in \Te} e^{-c \alpha n \| \theta - \tt^* \|^2}\nonumber\\
 &\leq \left(1 + \frac{2\sqrt{\pi}}{\epsilon\sqrt{n \alpha c}} \right)^d.
\end{align}
Notice that the ``center" point for this Gaussian curve $\tt^*$ can be different from the center of the ball $\ts$.

The summation of the remaining terms is handled by Lemma~\ref{lem:power-decay-without-epsilon}, assuming $n \geq (d+1)/\alpha b$.
\begin{align}\label{eq:power-tail}
& \sum_{\theta \in \Te \cap B(\ts, R)^c} A(P, \Pt)^{\alpha n} \\ &\leq \sum_{\theta \in \Te \cap B(\ts, R)^c} \left(\frac{a}{\| \theta - \ts \|^{b}}\right)^{\alpha n}\nonumber\\
 &\leq \left( \frac{4R}{\epsilon \sqrt{n \alpha b \log(R/4a^{1/b})}} \right)^d\nonumber\\
 &\leq \left(1 + \frac{4R}{\epsilon \sqrt{n \alpha b \log(R/4a^{1/b})}} \right)^d.
\end{align}
The assumption that $R \geq 11 a^{1/b}$ assures us that $\log (R/4a^{1/b}) \geq 1$, simplifying the bound.

Each of (\ref{eq:gaussian-center}) and (\ref{eq:power-tail}) are at least $1$, so by Lemma~\ref{lem:log-sum}, the sum of their logs is bounded by the log of their sum plus $2\log 2$. Finally, substitute $\alpha = 1/2$.
\end{proof}

The sample size requirement in Theorem~\ref{thm:gaussian-bound-center-power-bound-tail} can be avoided by using a squared norm penalty. The bound we derive has superlinear order in the dimension.
\begin{theorem}\label{thm:gaussian-power-any-sample-size}
Assume $X^n \iid P$, and let $\th$ be the penalized MLE over an $\epsilon$-discretization $\Te \subseteq \Theta \subseteq \R^d$ indexing an iid model with penalty $\L(\theta) = \| \theta \|^2$. Assume that for some $\ts, \tt^* \in \Theta$, radius $R$ and $a, c > 0$, the Hellinger affinity $A(P, \Pt)$ is bounded by $a/ \| \theta - \ts \|^b$ outside the ball $B(\ts, R)$ and bounded by $e^{-c \| \theta - \tt^* \|^2}$ inside the ball. If $R \geq 11a^{1/b} \vee 3\epsilon$, then
\begin{align*}
\E D_B(P, \Pth) & \leq 2\r_{\Te, \|\cdot\|^2}^{(n)}(P) + \\ & \qquad \frac{4d\left[\log (1 + \frac{2\sqrt{2\pi}}{\epsilon\sqrt{n c}}) + \log(1 +\frac{29\sqrt{d} + 6R}{\epsilon \sqrt{n b }})\right]}{n} + \\ & \qquad\qquad \frac{4\log(2 + 2\tfrac{22}{R^3}) + 2\| \ts \|^2 + 8}{n}.
\end{align*}
\end{theorem}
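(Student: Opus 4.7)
The plan is to mirror the approach of Theorem~\ref{thm:gaussian-bound-center-power-bound-tail}, but with the squared-norm penalty $\L(\theta) = \|\theta\|^2$ playing the crucial role of eliminating the sample-size requirement. Starting from Corollary~\ref{cor:bhattacharyya-pseudo-penalty} with $\alpha = 1/2$ and $\L(\theta) = \|\theta\|^2$, the task reduces to bounding
\begin{align*}
T := \sum_{\theta \in \Te} e^{-\|\theta\|^2/2} A(P, \Pt)^{n/2},
\end{align*}
which I would split as $T = T_1 + T_2$ according to whether $\theta \in B(\ts, R)$ or not.

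For the inner sum $T_1$, the penalty factor is a harmless bonus; I would drop it via $e^{-\|\theta\|^2/2} \leq 1$, apply the Gaussian-form affinity bound $A(P, \Pt) \leq e^{-c\|\theta - \tt^*\|^2}$, extend over all of $\Te$, and invoke Lemma~\ref{lem:gaussian-summation-off-center} to recover the bound $T_1 \leq (1 + \tfrac{2\sqrt{2\pi}}{\epsilon\sqrt{nc}})^d$, exactly as in the proof of Theorem~\ref{thm:gaussian-bound-center-power-bound-tail}.

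The real work is in the outer sum $T_2$. Using the Young-type inequality $\|\theta\|^2 \geq \tfrac{1}{2}\|\theta - \ts\|^2 - \|\ts\|^2$, I would rewrite $e^{-\|\theta\|^2/2} \leq e^{\|\ts\|^2/2}\, e^{-\|\theta - \ts\|^2/4}$, yielding
\begin{align*}
T_2 \leq e^{\|\ts\|^2/2} \sum_{\theta \in \Te \cap B(\ts, R)^c} e^{-\|\theta-\ts\|^2/4}\,(a/\|\theta-\ts\|^b)^{n/2}.
\end{align*}
The prefactor $e^{\|\ts\|^2/2}$ is precisely what contributes $2\|\ts\|^2/n$ to the final bound after multiplying by $4/n$. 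The remaining summand combines Gaussian decay (from the translated penalty) with polynomial decay (from the affinity); since the Gaussian envelope alone forces convergence, we no longer need the restriction $n \geq (d+1)/b$ used in Lemma~\ref{lem:power-decay-without-epsilon}. Heuristically, the Gaussian factor localizes mass to an effective radius of order $\sqrt{d}$ around $\ts$ while the polynomial factor furnishes the $\sqrt{nb}$ length scale; together they should produce a bound of the form $(1 + (29\sqrt{d}+6R)/(\epsilon\sqrt{nb}))^d$.

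Finally, I would combine $T_1$ and $T_2$ via Lemma~\ref{lem:log-sum}, carefully apportioning constants to produce the $4\log(2 + 44/R^3)$ and additive-$8$ terms, then multiply by $4/n$ and add $2\r_{\Te, \|\cdot\|^2}^{(n)}(P)$ to finish. The main obstacle is producing the outer-sum bound with the precise coefficients $29\sqrt{d} + 6R$: qualitatively it is clear that the Gaussian factor tames the polynomial tail so the combined integrand is summable uniformly in $n$, but nailing down the exact numerical constants likely requires a carefully chosen annular decomposition (or an integral-comparison argument) that cleanly separates the contributions of the two decay mechanisms.
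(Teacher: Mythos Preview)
Your starting framework matches the paper exactly: Corollary~\ref{cor:bhattacharyya-pseudo-penalty} with $\alpha=1/2$ and $\L(\theta)=\|\theta\|^2$, the inner/outer split, the inner sum via Lemma~\ref{lem:gaussian-summation-off-center}, and the recentering $e^{-\|\theta\|^2/2}\le e^{\|\ts\|^2/2}e^{-\|\theta-\ts\|^2/4}$ for the outer sum. All of that is correct and is what the paper does.

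The gap is in your treatment of $T_2$. You propose finding a single ``annular decomposition or integral-comparison argument'' that works uniformly in $n$, but admit you cannot see how to extract the specific constants. The paper does \emph{not} attempt a uniform argument. Instead it performs a three-way case split on $n$: for $n\ge 2(d+1)/b$ the Gaussian factor is simply discarded and Lemma~\ref{lem:power-decay-without-epsilon} applies as in Theorem~\ref{thm:gaussian-bound-center-power-bound-tail}; for $n\le 2(d-1)/b$ Lemma~\ref{lem:power-decay-reversed-epsilon-condition} handles the sum by comparison with a Gamma integral (this is where the $\sqrt{d}$ contribution originates, since the exponent $d-b\alpha n-1$ is positive); and for $n$ in the narrow intermediate window Lemma~\ref{lem:power-decay-middle-epsilon-condition} handles the sum by splitting the integral at $r=1$ (this is where the $22/R^3$ term comes from). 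The stated bound is then a common upper envelope of the three case-specific bounds. Without recognizing that the regimes $b\alpha n\gtrless d$ require qualitatively different estimates, you will not be able to pin down the constants $29\sqrt d$, $6R$, and $22/R^3$.
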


\begin{proof}
This time we use Corollary~\ref{cor:bhattacharyya-pseudo-penalty} rather than Corollary~\ref{cor:bhattacharyya-minus-penalty-pseudo-penalty}. The challenge is to bound the summation
\begin{align*}
\sum_{\theta \in \Te \cap B(\ts, R)^c} e^{-\|\theta\|^2} \left( \frac{a}{\| \theta - \ts \|^b} \right)^{\alpha n}.
\end{align*}
Assuming $n \geq 2(d+1)/b$, we can bound that term as in Theorem~\ref{thm:gaussian-bound-center-power-bound-tail}. With smaller $n$, we invoke Lemmas~\ref{lem:power-decay-reversed-epsilon-condition} and~\ref{lem:power-decay-middle-epsilon-condition}. In each case, the bound is no greater than the one we have claimed.
\end{proof}

As in Corollary~\ref{cor:affinity-gaussian-decay}, the bounds in Theorems~\ref{thm:gaussian-bound-center-power-bound-tail} and~\ref{thm:gaussian-power-any-sample-size} remain stable if $\epsilon$ is proportional to $1/\sqrt{n}$.

As an example, we will see how these bound apply in a location family parametrized by the mean in $\Theta \subseteq \R^d$. First, we establish the power decay, assuming $P$ has a finite first moment. By Lemma~\ref{lem:hellinger-affinity-first-moments},
\begin{align*}
A(P, \Pt) &\leq \frac{2(s_P + s_\Theta)}{\| \theta - \ts\|},
\end{align*}
where $\ts := \E_{X \sim P} X$, and the other constants are the first central moments $s_P := \E_{X \sim P} \| X - \ts \|$ and $s_\Theta := \E_{X \sim \Pt} \| X - \theta \|$. Therefore, Theorems~\ref{thm:gaussian-bound-center-power-bound-tail} and~\ref{thm:gaussian-power-any-sample-size} apply if we can find a Gaussian-shaped Hellinger affinity bound that holds inside the ball centered at $\ts$ with radius $R = 22(s_P + s_\Theta) \vee 3\epsilon$.

In particular, let us assume the model comprises distributions that are continuous with respect to Lebesgue measure. Then we will also assume that $P$ is continuous; otherwise, the risk bound is infinite anyway. These assumptions ensure the existence of exact medians, enabling us to use Lemma~\ref{lem:hellinger-affinity-bound-marginal-median}.

Let $v$ be the vector of marginal medians of the model distribution with mean $\theta = 0$. The marginal median vector of \emph{any} model distribution $\Pt$ is then $\theta + v$. Let $m_P$ be the marginal median vector of $P$. By Lemma~\ref{lem:hellinger-affinity-bound-marginal-median}, for any $r \geq 0$, the inequality
\begin{align*}
A(P, \Pt) &\leq e^{-c \|\theta + v - m_P\|^2}
\end{align*}
holds for $\theta$ within $B(m_P - v, r)$, where $c$ is $\tfrac{1}{2d}$ times the minimum squared marginal density of $\Pt$ within $r$ of its median. It remains to identify an $r$ large enough that $B(m_P - v, r)$ contains $B(\ts, R)$. Using the triangle inequality and then Lemma~\ref{lem:marginal-meadian-mean-distance} to bound the distance between means and medians,
\begin{align*}
\|\theta - (m_P - v)\| &= \|\theta - \ts + v - (m_P - \ts)\|\\
 &\leq \| \theta - \ts \| + \| v \| + \| m_P - \ts \|\\
 &\leq \| \theta - \ts \| + s_\Theta \sqrt{d} + s_P \sqrt{d}.
\end{align*}
For $\theta$ in the ball $B(\ts, R)$, the first term is bounded by $R$. This tells us that the ball $B(m_p - v, R + \sqrt{d}(s_\Theta + s_P))$ contains $B(\ts, R)$.

Thus if all the marginal densities of $\Pt$ are positive within $R + \sqrt{d}(s_\Theta + s_P)$ of their medians, then there is a positive $c$ for which
\begin{align*}
A(P, \Pt) &\leq e^{-c \|\theta - (m_P - v)\|^2}
\end{align*}
in $B(\ts, R)$, confirming that Theorems~\ref{thm:gaussian-bound-center-power-bound-tail} and~\ref{thm:gaussian-power-any-sample-size} hold.

If the data-generating distribution is itself in the location family, then $P = \Pts$ and $s_P = s_\Theta$. Thus the bound holds uniformly over $\ts \in \Theta$. If there exists $\beta > 0$ such that every $\epsilon$-discretization of the family is a KL-net with radius $\beta \epsilon^2$, then a minimax risk bound can be derived in the same manner as Corollary~\ref{cor:exponential-affinity-guassian-minimax}.

\subsection{Log reciprocal pmf of $\mathbf{\th}$ as pseudo-penalty}\label{sec:log-reciprocal-pmf-pseudo-penalty}

In Section~\ref{sec:bhattacharyya-pseudo-penalty}, we chose a pseudo-penalty to have an expectation that easy to handle; we only had to worry about the resulting log summation. Now we will select a pseudo-penalty with the opposite effect. We can eliminate Corollary~\ref{cor:subtract-penalty}'s log summation term by letting $L$ be twice a codelength function. The smallest resulting $\E L(\th)$ comes from setting $L$ to be two times the log reciprocal of the probability mass function of $\th$. This expectation is the Shannon entropy $H$ of the penalized MLE's distribution (i.e. the image measure of $P$ under the $\Theta$-valued deterministic transformation $\th$).
\begin{corollary}\label{cor:bound-with-entropy-term}
Let $X^n \iid P$, and let $\th$ be a penalized MLE over all $\theta \in \Theta$ indexing a countable iid model. Then
\begin{align*}
\E D_B(P, \Pth) \leq \r_{\Theta, \L}^{(n)}(P) + \frac{2 H(\th) - \E \L(\th)}{n}.
\end{align*}
\end{corollary}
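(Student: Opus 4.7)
The plan is to instantiate Corollary~\ref{cor:subtract-penalty} with a pseudo-penalty tailored so that the exponential sum collapses to $1$, leaving behind only the Shannon entropy of the estimator's distribution.

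Let $q$ denote the pmf of the random variable $\th$ on $\Theta$. Since $\th$ is a deterministic function of $X^n \sim P^n$, the pmf $q$ is determined entirely by $P$, and the remark following Corollary~\ref{cor:subtract-penalty} explicitly permits the pseudo-penalty $L$ to depend on $P$. I would set $L(\theta) := 2 \log \frac{1}{q(\theta)}$ on the support of $\th$; since $\th$ lies in this support almost surely, we may harmlessly restrict the sum to that support (or extend $L$ to be arbitrarily large elsewhere so the contribution of $e^{-L(\theta)/2}$ off the support is negligible).

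With this choice, the log-sum term vanishes,
\[
2 \log \sum_{\theta \in \Theta} e^{-L(\theta)/2} \;=\; 2 \log \sum_{\theta} q(\theta) \;=\; 0,
\]
while the expected pseudo-penalty evaluates to
\[
\E L(\th) \;=\; 2 \,\E \log \frac{1}{q(\th)} \;=\; 2 H(\th).
\]
Substituting these into the iid form of Corollary~\ref{cor:subtract-penalty} (the displayed bound just below its statement) yields
\[
\E D_B(P, \Pth) \;\leq\; \r_{\Theta, \L}^{(n)}(P) + \frac{2 H(\th) - \E \L(\th)}{n},
\]
which is the claim.

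There is no genuine obstacle once the pseudo-penalty is identified; the only subtlety worth flagging is the ``$L$ may depend on $P$ but not on the data'' proviso, and this is satisfied because $q$ depends only on $P$ together with the deterministic estimator map $X^n \mapsto \th$. It is also worth noting, in the spirit of the surrounding discussion, that $L = 2\log(1/q)$ is precisely twice the Shannon codelength for $\th$ and is therefore the smallest twice-codelength $L$ one could plug in, which is exactly why this choice minimizes $\E L(\th)$ among all Kraft-tight pseudo-penalties.
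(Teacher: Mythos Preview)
Your proposal is correct and matches the paper's approach exactly: the text preceding the corollary explicitly says to apply Corollary~\ref{cor:subtract-penalty} with $L$ equal to twice the log reciprocal pmf of $\th$, so that the log-sum term vanishes and $\E L(\th) = 2H(\th)$. Your handling of the off-support values and the ``$L$ may depend on $P$'' proviso is appropriate and fills in details the paper leaves implicit.
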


It is known that the risk of the MLE is bounded by the log-cardinality of the model (e.g. \cite{li1999}); Corollary~\ref{cor:bound-with-entropy-term} implies a generalization of this fact for penalized MLEs:
\begin{align*}
\E D_B(P, \Pth) \leq \r_{\Theta, \L}^{(n)}(P) + \frac{2 \log |\Theta| - \E \L(\th)}{n}.
\end{align*}
Importantly, Corollary~\ref{cor:bound-with-entropy-term} also applies to models of infinite cardinality.

\begin{lemma}\label{lem:estimator-entropy-bound}
Let $\Te \subseteq \R^d$ be an $\epsilon$-discretization, and let $\th$ be a $\Te$-valued random vector. Suppose that for some $\ts \in \R^d$ and some radius $R \geq 0$, every $\theta \in \Te$ outside of $B(\ts, R)$ has probability bounded by $e^{-c \| \theta - \ts\|^2}$. Then the entropy of $\th$ has the bound
\begin{align*}
H(\th) \leq \frac{d}{2} \left( \frac{4\sqrt{\pi}}{\epsilon \sqrt{c}} \right)^d + d \log (1+\tfrac{2[c^{-1/2} \vee R \vee 3\epsilon]}{\epsilon}).
\end{align*}
\end{lemma}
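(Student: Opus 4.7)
The plan is to bound $H(\hat\theta) = -\sum_{\theta \in \Te} P(\hat\theta=\theta)\log P(\hat\theta=\theta)$ using the Gibbs inequality: for any probability distribution $q$ on $\Te$, one has $H(\hat\theta) \leq -\sum_\theta p_\theta \log q_\theta$, where $p_\theta := P(\hat\theta=\theta)$. The art is to design $q$ that mimics the two-regime structure of the hypothesis on $p_\theta$, namely ``free'' inside the ball $B(\ts, R)$ and Gaussian-tailed outside it.

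First, I would pick an effective radius $R' := c^{-1/2}\vee R \vee 3\epsilon$, partition $\Te$ into $B := \Te \cap B(\ts, R')$ and its complement $B^c$ within $\Te$, and define the proposal as the equally-weighted mixture
\begin{equation*}
q_\theta = \begin{cases} 1/(2|B|), & \theta \in B,\\ e^{-c\|\theta-\ts\|^2}/(2Z), & \theta \in B^c,\end{cases}
\end{equation*}
where $Z := \sum_{\theta \in B^c} e^{-c\|\theta-\ts\|^2}$. Since $R' \geq R$, the pointwise tail bound $p_\theta \leq e^{-c\|\theta-\ts\|^2}$ still holds on $B^c$.

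Next, Gibbs yields
\begin{equation*}
H(\hat\theta) \leq P(\hat\theta\in B)\log(2|B|) + \log(2Z) + c\sum_{\theta \in B^c} p_\theta\|\theta-\ts\|^2,
\end{equation*}
and I would bound each piece in turn. The lattice point count $|B| \leq (1 + 2R'/\epsilon)^d$ (standard count of an $\epsilon$-lattice inside a Euclidean ball) contributes the $d\log(1 + 2R'/\epsilon)$ term, which explains the $\max$ inside the second summand of the claim. For $Z$, I would invoke the Gaussian-summation lemma that the paper uses repeatedly (Lemma~\ref{lem:gaussian-summation-off-center}, applied with parameter $c$ rather than $\alpha n c$), giving $Z \leq (1 + 2\sqrt{\pi}/(\epsilon\sqrt c))^d$. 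For the quadratic moment, use $p_\theta \leq e^{-c\|\theta-\ts\|^2}$ on $B^c$ together with the elementary inequality $x e^{-x} \leq e^{-x/2}$ (valid for all $x\geq 0$) to get $c\|\theta-\ts\|^2 e^{-c\|\theta-\ts\|^2} \leq e^{-c\|\theta-\ts\|^2/2}$, then sum using the same Gaussian-summation lemma.

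Finally, I would combine the three bounds, using the choice $R' \geq c^{-1/2}$ and $R' \geq 3\epsilon$ to simplify constants, and absorb the resulting ``Gaussian mass'' contributions into the first term of the claimed bound while the lattice counting term produces the logarithm in the second. The main obstacle is the bookkeeping: verifying that the threshold $R' = c^{-1/2} \vee R \vee 3\epsilon$ is precisely what is needed to make the Gaussian-summation lemma and the $xe^{-x}\leq e^{-x/2}$ estimate combine cleanly into the stated form, and checking that the first, dimension-exponential term subsumes $\log(2Z)$ and the bounded quadratic-moment sum without extra slack.
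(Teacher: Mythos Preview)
Your Gibbs-inequality route is a genuinely different strategy from the paper's, and it does produce a valid entropy bound of the right qualitative shape, but the final ``absorption'' step you flag as the main obstacle does \emph{not} go through to yield the stated inequality. After your three estimates you would need
\[
2\log 2 \;+\; d\log\!\Bigl(1+\tfrac{2\sqrt{\pi}}{\epsilon\sqrt c}\Bigr)\;+\;\Bigl(1+\tfrac{2\sqrt{2\pi}}{\epsilon\sqrt c}\Bigr)^{\!d}\;\le\;\frac{d}{2}\Bigl(\tfrac{4\sqrt{\pi}}{\epsilon\sqrt c}\Bigr)^{\!d},
\]
but the left side stays bounded below (by $1+2\log 2$) as $\epsilon\sqrt c\to\infty$, while the right side tends to $0$. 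So the Gibbs approach gives a comparable bound with different constants, not the lemma as stated. (There is also a small sign issue: bounding $P(B^c)\log(2Z)$ by $\log(2Z)$ requires $2Z\ge 1$.)

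The paper's argument avoids a proposal distribution entirely. It splits the entropy sum at the same ball $B=B(\ts,R')$ with $R'=c^{-1/2}\vee R\vee 3\epsilon$ and handles the two pieces directly. Outside $B$, the condition $R'\ge c^{-1/2}$ forces $e^{-c\|\theta-\ts\|^2}\le 1/e$, so by monotonicity of $z\mapsto z\log(1/z)$ on $[0,1/e]$ one may replace $q(\theta)\log(1/q(\theta))$ by $c\|\theta-\ts\|^2 e^{-c\|\theta-\ts\|^2}$; the condition $R'\ge 3\epsilon$ then lets Lemma~\ref{lem:tail-summation} convert the sum to a radial integral, which is recognized as a Gamma integral and evaluates exactly to $\tfrac{d}{2}(4\sqrt{\pi}/(\epsilon\sqrt c))^d$. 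Inside $B$, the restricted pmf is a subprobability on a set with at least three points (again from $R'\ge 3\epsilon$), so Lemma~\ref{lem:entropy-extension-size} bounds its entropy by $\log|\Te\cap B|\le d\log(1+2R'/\epsilon)$. This is how the precise constants in the statement arise; your Gaussian-summation lemma does not reproduce them.
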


If $\epsilon \sqrt{c} \leq 4 \sqrt{\pi}$, then this bound grows exponentially in $d$. However, if $c$ and $R$ are known, then one can set $\epsilon \geq 4 \sqrt{\pi} /(\sqrt{c} \vee R/3)$ and find that $H(\th)$ is guaranteed to be bounded by $3d$. Of course, one needs to take the behavior of the index of resolvability into account as well; good overall behavior will typically require that $c$ has order $n$.

In certain models satisfying $D(\Pt \| \Pts) \geq a\| \theta - \ts\|^2$ for some $a > 0$, we surmise that it may be possible to establish the applicability of Lemma~\ref{lem:estimator-entropy-bound} (with $c$ having order $n$) by using information theoretic large deviation techniques along the lines of \cite[Thm 19.2]{grunwald2007}.

\subsection{Quadratic form as pseudo-penalty}\label{sec:quadratic-pseudo-penalty}

Other simple corollaries come from using a quadratic pseudo-penalty $L(\theta) = (\theta - \E \th)' M (\theta - \E \th)$ for some positive definite matrix $M$. The expected pseudo-penalty is then
\begin{align*}
\E L(\th) &= \tr M \C \th,
\end{align*}
where $\C \th$ denotes the covariance matrix of the random vector $\th(X^n)$ with $X^n \iid P$. For the log summation term, we note that
\begin{align*}
\sum_{\te \in \Te} e^{-(\te - \E \th)' M (\te - \E \th)} &\leq \sum_{\te \in \Te} e^{-\lambda_d(M)\|\te - \E \th\|^2}\\
 &\leq \left(1 + \frac{2\sqrt{\pi}}{\epsilon \sqrt{\lambda_d(M)}} \right)^d,
\end{align*}
by Lemma~\ref{lem:gaussian-summation-off-center}. Using $\alpha I_d$ as $M$ gives us Corollary~\ref{cor:bound-with-quadratic-pseudo-identity}.

\begin{corollary}\label{cor:bound-with-quadratic-pseudo-identity}
Assume $X^n \iid P$, and let $\th$ be the penalized MLE over an $\epsilon$-discretization $\Te \subseteq \Theta \subseteq \R^d$ indexing an iid model with penalty $\L$. Then for any $\alpha \geq 0$,
\begin{align*}
\E D_B(P, \Pth) & \leq \r_{\Te, \L}^{(n)}(P) + \\ & \qquad \frac{2 d \log(1 + \tfrac{2\sqrt{\pi}}{\epsilon\sqrt{\alpha}}) + \alpha \V \th - \E \L(\th)}{n}.
\end{align*}
\end{corollary}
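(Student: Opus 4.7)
The plan is to apply Corollary~\ref{cor:subtract-penalty} directly, with the pseudo-penalty $L(\theta) = \alpha \|\theta - \E \th\|^2$ -- the $M = \alpha I_d$ instance of the general quadratic form $(\theta - \E \th)' M (\theta - \E \th)$ introduced in the lead-in paragraph. Since $L$ depends on $P$ only through the non-random quantity $\E \th$ (and not on the observed data), it is a legitimate pseudo-penalty, as the paper notes that $L$ may depend on $P$ but not on the data.

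First I would compute $\E L(\th) = \alpha\, \E \| \th - \E \th \|^2 = \alpha \V \th$, which is exactly the $\alpha \V \th$ term appearing inside the bracket. Next I would control the Kraft-style sum $\sum_{\te \in \Te} e^{-\tfrac{1}{2} L(\te)}$, i.e., a Gaussian kernel on the $\epsilon$-lattice $\Te \subseteq \R^d$ centered at $\E \th$. Applying Lemma~\ref{lem:gaussian-summation-off-center} with concentration $\alpha$ (the smallest eigenvalue of $\alpha I_d$) yields a bound of the form $(1 + 2\sqrt{\pi}/[\epsilon\sqrt{\alpha}])^d$, exactly as carried out in the lead-in display for the general matrix $M$. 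Taking $2\log$ produces the $2 d \log(1 + 2\sqrt{\pi}/[\epsilon\sqrt{\alpha}])$ term.

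Substituting these two evaluations into Corollary~\ref{cor:subtract-penalty} (with $\Te$ in place of $\Theta$, since the penalized MLE is taken over the $\epsilon$-discretization) yields the claimed inequality. There is no genuine obstacle -- the corollary is a routine specialization of Corollary~\ref{cor:subtract-penalty} to a centered isotropic quadratic pseudo-penalty, and the only ingredient beyond substitution is the Gaussian lattice-sum estimate from Lemma~\ref{lem:gaussian-summation-off-center}. The edge case $\alpha = 0$ is vacuous, since the logarithmic term then diverges.
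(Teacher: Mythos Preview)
Your proposal is correct and mirrors the paper's own derivation exactly: the lead-in paragraph to Corollary~\ref{cor:bound-with-quadratic-pseudo-identity} applies Corollary~\ref{cor:subtract-penalty} with the quadratic pseudo-penalty $L(\theta) = (\theta - \E\th)' M (\theta - \E\th)$, evaluates $\E L(\th) = \tr M\, \C\th$, bounds the summation via Lemma~\ref{lem:gaussian-summation-off-center}, and then specializes to $M = \alpha I_d$. Your remark that $L$ depends on $P$ only through $\E\th$ (hence is a legitimate pseudo-penalty) and your handling of the $\alpha = 0$ edge case are both apt.
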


As described in Section~\ref{sec:behavior-r}, one gets desirable order $1/n$ behavior from $\r_{\Te, \L}^{(n)}(P)$ by using $\epsilon$ proportional to $1/\sqrt{n}$. For either of these two corollaries above to have order $1/n$ bounds, the numerator of the second term should be stable in $n$. In Corollary~\ref{cor:bound-with-quadratic-pseudo-identity}, one sets $\alpha$ proportional to $1/\epsilon^2$ and thus needs $\V \th$ to have order $1/n$. In many cases, such as ordinary MLE with an exponential family, the covariance matrix of the optimizer over $\Theta$ is indeed bounded by a matrix divided by $n$. However, one still needs to handle the discrepancy in behavior between the continuous and discretized estimator.

In a sense, Corollary~\ref{cor:bound-with-quadratic-pseudo-identity} shifts the problem to another risk-related quantity, while the pseudo-penalties used in Sections~\ref{sec:bhattacharyya-pseudo-penalty} and \ref{sec:log-reciprocal-pmf-pseudo-penalty} provide more direct ways of deriving exact risk bounds of order $1/n$.

\subsection{Penalty as pseudo-penalty}

Another simple corollary to Theorem~\ref{thm:arbitrary-penalty} uses $L = \alpha \L$.
\begin{corollary}\label{cor:bound-with-penalty-pseudo}
Assume $X^n \iid P$, and let $\th$ be the penalized MLE over $\Theta$ indexing a countable iid model with penalty $\L$. Then
\begin{align*}
\E D_B(P, \Pth) & \leq \r_{\Theta, \L}^{(n)}(P) + \\ & \qquad \frac{2 \log \sum_{\theta \in \Theta} e^{-\tfrac{\alpha + 1}{2}\L(\theta)} + \alpha \E \L(\th)}{n}.
\end{align*}
\end{corollary}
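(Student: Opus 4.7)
The plan is almost entirely bookkeeping: this is a corollary, and its content is a single substitution into the iid form of Theorem~\ref{thm:arbitrary-penalty}. I will take the inequality
\begin{align*}
\E D_B(P, \Pth) \leq \r_{\Theta, \L}^{(n)}(P) + \frac{2 \log \sum_{\theta \in \Theta} e^{-\tfrac{1}{2}[\L(\theta) + L(\theta)]} + \E L(\th)}{n},
\end{align*}
which is the iid data and model version of Theorem~\ref{thm:arbitrary-penalty} displayed immediately after its proof, and specialize $L$ to $\alpha \L$.

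With this choice, the two pieces of the additive term in the numerator simplify mechanically. The exponent inside the sum becomes $-\tfrac{1}{2}[\L(\theta) + \alpha \L(\theta)] = -\tfrac{\alpha + 1}{2}\L(\theta)$, so the log summation collapses to $\log \sum_{\theta \in \Theta} e^{-\tfrac{\alpha+1}{2}\L(\theta)}$. The pseudo-penalty term becomes $\E L(\th) = \alpha \E \L(\th)$ by linearity. Substituting these two simplifications back yields exactly the bound in the statement.

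There is no obstacle to flag: the only things to check are that Theorem~\ref{thm:arbitrary-penalty} imposes no sign restriction on $L$ (it allows any $L:\Theta \to \R$, so $\alpha$ can be any real number for which the right-hand side is meaningful, and in particular any $\alpha \geq 0$ as the statement is used) and that the countable-model, penalized-MLE hypotheses of Theorem~\ref{thm:arbitrary-penalty} match those of the corollary verbatim. Both conditions hold by inspection, so the corollary is established.
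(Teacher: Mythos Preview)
Your proposal is correct and matches the paper's approach exactly: the paper states just before this corollary that it ``uses $L = \alpha \L$'' in Theorem~\ref{thm:arbitrary-penalty}, and provides no further proof since corollaries are treated as immediate consequences. Your bookkeeping of the two substitutions is precisely what is intended.
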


Bayesian MAP (maximum a posteriori) is a common penalized likelihood procedure that has insufficient penalty for the index of resolvability bound (\ref{eq:usual}) to be valid. In that case, Corollary~\ref{cor:set-l-zero} applies (where $\L$ comprises the logs of the reciprocals of prior masses), but the sum of exponential terms may be infinite. An alternative approach comes from Corollary~\ref{cor:bound-with-penalty-pseudo} by setting $\alpha = 1$.

\begin{corollary}\label{cor:bound-map}
Assume $X^n \iid P$, and let $\th$ be the MAP estimate over $\Theta$ indexing a countable iid model with prior pmf $q$. Then
\begin{align*}
\E D_B(P, \Pth) \leq \r_{\Theta, \log 1/q}^{(n)}(P) + \frac{\E \log (1/q(\th))}{n}.
\end{align*}
\end{corollary}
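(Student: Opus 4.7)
The plan is to invoke Corollary~\ref{cor:bound-with-penalty-pseudo} with $\alpha = 1$ and the penalty specialized to $\L(\theta) = \log(1/q(\theta))$, which is exactly the penalty corresponding to MAP estimation (since maximizing $\log p_\theta(x^n) + \log q(\theta)$ is the same as minimizing $-\log p_\theta(x^n) + \log(1/q(\theta))$). This immediately yields
\begin{align*}
\E D_B(P, \Pth) &\leq \r_{\Theta, \log 1/q}^{(n)}(P) \\ & \quad + \frac{2 \log \sum_{\theta \in \Theta} e^{-\log(1/q(\theta))} + \E \log(1/q(\th))}{n}.
\end{align*}

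The only remaining step is to observe that $e^{-\log(1/q(\theta))} = q(\theta)$, so the summation in the numerator of the second term collapses to $\sum_{\theta \in \Theta} q(\theta)$, which is at most $1$ because $q$ is a prior probability mass function on the countable set $\Theta$. Hence $2\log \sum_\theta q(\theta) \leq 0$ and can be discarded, producing the claimed bound.

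There is no real obstacle here; the content of the corollary is entirely bundled into the choice $\alpha = 1$ combined with the fact that the ``Kraft-like'' sum is automatically $\leq 1$ for a genuine prior. The only minor care point is tracking the factor $(\alpha+1)/2 = 1$ in the exponent of Corollary~\ref{cor:bound-with-penalty-pseudo}, which is precisely what makes the exponent cancel the $\log 1/q$ and renders the summation trivially bounded. If $q$ were only sub-probability (e.g.\ improper), the same derivation still gives a bound, merely with a nonpositive correction from $2\log\sum_\theta q(\theta)$.
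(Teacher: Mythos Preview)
Your proposal is correct and follows exactly the route the paper indicates: apply Corollary~\ref{cor:bound-with-penalty-pseudo} with $\alpha=1$ and $\L(\theta)=\log(1/q(\theta))$, so that the exponent $(\alpha+1)/2=1$ makes the summation collapse to $\sum_\theta q(\theta)\leq 1$ and the remaining term is $\E\log(1/q(\th))$. There is nothing to add.
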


For $\epsilon$-discretizations, realize that $q$ has to change as the refinement increases; thus the second term in Corollary~\ref{cor:bound-map} should be considered to have order strictly larger than $1/n$ in that context.

\section{Adaptive Modeling}\label{sec:adaptive-modeling}

Suppose $\Theta = \bigcup_{k \geq 1} \Theta^{(k)}$ is a model class and each $\Theta^{(k)}$ is a model of countable cardinality. Let us index the distributions in $\Theta$ by $\nu = (k, \theta)$ with $\theta \in \Theta^{(k)}$. Assume the penalty and pseudo-penalty have the form $\L(\nu) = \L_0(k) + \L_k(\theta)$ and $L(\nu) = L_0(k) + L_k(\theta)$. Then Theorem~\ref{thm:arbitrary-penalty} can be useful if the penalty plus pseudo-penalty on $k$ is large enough to counteract the within-model summations.
\begin{align*}
& \sum_{\nu \in \Theta} e^{-\tfrac{1}{2}[\L(\nu) + L(\nu)]}  \\ &= \sum_{k \geq 1} \left[ e^{-\tfrac{1}{2}[\L_0(k) + L_0(k)]} \sum_{\theta \in \Theta^{(k)}} e^{-\tfrac{1}{2}[\L_k(\theta) + L_k(\theta)]} \right].
\end{align*}
One can use $L_0(k) = 0$ to avoid having to worry about the behavior of $\hat{k}$. Then bounds on $\sum_{\theta \in \Theta^{(k)}} e^{-[\L_k(\theta) + L_k(\theta)]}$ should be known so that one can devise a penalty on $k$ that bounds the weighted sum of these summations. In particular, if such bounds do not depend on any unknown quantities, then one can set $\L_0(k) \geq k \sqrt{2} + 2\log \sum_{\theta \in \Theta^{(k)}} e^{-\tfrac{1}{2}[\L_k(\theta) + L_k(\theta)]}$ and have
\begin{align*}
\log \sum_{k \geq 1} \left[ e^{-\tfrac{1}{2}\L_0(k)} \sum_{\theta \in \Theta^{(k)}} e^{-\tfrac{1}{2}[\L_k(\theta) + L_k(\theta)]} \right] &\leq 0.
\end{align*}
It remains to deal with $\E L_k(\th)$, either by bounding it or by absorbing it into the risk as in Corollary~\ref{cor:bhattacharyya-pseudo-penalty}.

An important feature of the resolvability bound method is its generality; bounds can be derived that assume very little about the data-generating distribution. In non-adaptive models, however, the bound cannot become small if the data-generating distribution is far from Gaussian. Our hope is to derive similar exact risk bounds for penalized MLEs over flexible model classes as well, such as Gaussian mixtures, so that $D(P \| \Theta)$ can be made small (or possibly zero) for large classes of potential data-generating distributions.

\section*{Acknowledgment}

We would like to acknowledge our advisor Andrew R. Barron for teaching us about the MDL method for risk bounds. We also thank him for his guidance in preparing this document, especially for driving us toward the most important aspects and consequences of our ideas.

We are also grateful to the anonymous reviewers whose detailed feedback led to dramatic improvements to this paper. Most importantly, they brought \cite{zhang2006} to our attention, which inspired us to write Section~\ref{sec:bhattacharyya-pseudo-penalty}.

\section*{Appendix}\label{sec:appendix}

\subsection{Miscellaneous facts}

The following handy facts are known, but we provide brief proofs here nonetheless.

\begin{lemma}\label{lem:squared-norm-bound}
For any vectors $u,v$ in a real inner product space,
\begin{align*}
\|u - v\|^2 \leq 2\|u\|^2 + 2\|v\|^2.
\end{align*}
\end{lemma}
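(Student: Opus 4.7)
The plan is to use the parallelogram law, which identifies the exact gap in the claimed inequality. Specifically, in any real inner product space one has
\begin{align*}
\|u+v\|^2 + \|u-v\|^2 = 2\|u\|^2 + 2\|v\|^2,
\end{align*}
which follows immediately from expanding both terms on the left using $\|w\|^2 = \langle w, w\rangle$ and bilinearity. Rearranging gives $\|u-v\|^2 = 2\|u\|^2 + 2\|v\|^2 - \|u+v\|^2$, and since $\|u+v\|^2 \geq 0$, the claim follows. There is essentially no obstacle here; the only choice is between this route and the equivalent one that expands $\|u-v\|^2 = \|u\|^2 - 2\langle u,v\rangle + \|v\|^2$ and then applies $-2\langle u,v\rangle \leq 2\|u\|\|v\| \leq \|u\|^2 + \|v\|^2$ (AM-GM or Cauchy–Schwarz). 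I prefer the parallelogram-law version because it makes the inequality an equality up to the nonnegative term $\|u+v\|^2$, so the bound is self-evidently tight when $u = -v$ and attains equality in that case.
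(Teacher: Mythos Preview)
Your proof via the parallelogram law is correct. The paper instead takes exactly the alternative route you mention: it expands $\|u-v\|^2 = \|u\|^2 + \|v\|^2 - 2\langle u,v\rangle$, bounds $-2\langle u,v\rangle \leq 2\|u\|\|v\|$ by Cauchy--Schwarz, and then applies the arithmetic--geometric mean inequality $2\|u\|\|v\| \leq \|u\|^2 + \|v\|^2$. Your argument is marginally cleaner in that it identifies the slack term explicitly as $\|u+v\|^2$, making the equality case $u=-v$ transparent and avoiding a two-step chain of inequalities; the paper's version, on the other hand, does not rely on the parallelogram identity and so would extend verbatim to settings where only Cauchy--Schwarz is available. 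For a real inner product space both are equally valid and essentially the same length.
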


\begin{proof}
We apply the Cauchy-Schwarz inequality followed by the arithmetic-geometric mean inequality.
\begin{align*}
\|a - b\|^2 &= \|a\|^2 + \|b\|^2 - 2\langle a, b \rangle\\
 &\leq \|a\|^2 + \|b\|^2 + 2 \|a\| \|b\|\\
 &\leq \|a\|^2 + \|b\|^2 + 2 (\|a\|^2/2 + \|b\|^2/2).
\end{align*}
\end{proof}

\begin{lemma}\label{lem:quadratic-form-bound}
Let $v \in \R^d$, and let $M$ be a symmetric $d \times d$ matrix. Then
\begin{align*}
\lambda_d(M) \leq \frac{v' M v}{\| v \|^2} \leq \lambda_1(M).
\end{align*}
\end{lemma}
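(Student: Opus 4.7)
The plan is to reduce the claim to a weighted average of eigenvalues by diagonalizing $M$. Since $M$ is symmetric, the spectral theorem gives an orthogonal matrix $U$ and a diagonal matrix $\Lambda$ with entries $\lambda_1(M) \geq \lambda_2(M) \geq \cdots \geq \lambda_d(M)$ along the diagonal such that $M = U \Lambda U'$. This is the only structural fact needed; everything after this is just bookkeeping.

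Next, I would make the change of variables $w := U' v$. Because $U$ is orthogonal, $\|w\|^2 = v' U U' v = \|v\|^2$, so the denominator in the Rayleigh quotient is preserved. For the numerator, I would write
\begin{align*}
v' M v = v' U \Lambda U' v = w' \Lambda w = \sum_{j=1}^d \lambda_j(M)\, w_j^2.
\end{align*}
This expresses the numerator as a convex combination of the eigenvalues weighted by $w_j^2/\|w\|^2$.

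The two inequalities then fall out by replacing each $\lambda_j(M)$ by either the smallest or the largest eigenvalue. Concretely, $\lambda_d(M) \sum_j w_j^2 \leq \sum_j \lambda_j(M) w_j^2 \leq \lambda_1(M) \sum_j w_j^2$, and dividing through by $\|v\|^2 = \|w\|^2$ yields the claimed sandwich. Since this is a completely standard Rayleigh quotient argument, there is essentially no obstacle; the only thing worth being careful about is invoking the spectral theorem (which requires $M$ symmetric, as assumed) to justify the existence of the orthogonal diagonalization rather than, say, a Schur decomposition.
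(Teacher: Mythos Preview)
Your proposal is correct and follows essentially the same argument as the paper: diagonalize $M$ orthogonally via the spectral theorem, express $v'Mv$ as $\sum_j \lambda_j (U'v)_j^2$, and observe that dividing by $\|v\|^2$ yields a weighted average of the eigenvalues, which is trapped between $\lambda_d(M)$ and $\lambda_1(M)$.
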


\begin{proof}
Any symmetric matrix has an orthonormal eigenvector decomposition $M = Q \Lambda Q'$.
\begin{align*}
v' M v &= v' Q \Lambda Q' v\\
 &= \sum_j \lambda_j (Q'v)_j^2\\
 &= \| v \|^2 \sum_j \lambda_j \left( Q'\frac{v}{\| v \|} \right)_j^2.
\end{align*}
Realize that squared values in the summation are eigenvector-basis coordinates of the unit vector in the direction of $v$. As such, these squared coordinates must sum to $1$. Thus the summation is a weighted average of the eigenvalues. It achieves its maximum $\lambda_1$ when $v$ is in the direction of the first eigenvector, and it achieves its minimum $\lambda_d$ when $v$ is in the direction of the last eigenvector.
\end{proof}

\begin{lemma}\label{lem:log-sum}
Let $a_1, \ldots, a_K \geq 1/K$. Then
\begin{align*}
\log \sum_k a_k &\leq \sum_k \log a_k + K \log K.
\end{align*}
\end{lemma}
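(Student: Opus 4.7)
The claim is equivalent, after exponentiating, to $\sum_k a_k \leq K^K \prod_k a_k$, which suggests a direct two-step upper bound rather than any convexity argument. My plan is:

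First, bound the sum by the maximum: $\sum_k a_k \leq K \max_k a_k$. This is trivial and gives one factor of $K$.

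Second, use the hypothesis $a_j \geq 1/K$ to bound the maximum in terms of the product. For any fixed $k$,
\begin{align*}
\prod_j a_j = a_k \prod_{j \neq k} a_j \geq a_k \cdot (1/K)^{K-1},
\end{align*}
so $a_k \leq K^{K-1} \prod_j a_j$. Taking the max over $k$ on the left (the right side does not depend on $k$) gives $\max_k a_k \leq K^{K-1} \prod_j a_j$.

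Combining the two bounds yields $\sum_k a_k \leq K^K \prod_j a_j$, and taking logarithms gives the stated inequality. There is no real obstacle here; the only subtlety is noticing that the hypothesis $a_k \geq 1/K$ is exactly what makes the product of the ``other'' $K-1$ terms at least $K^{-(K-1)}$, which is what lets us convert a max into a product with the constant $K^{K-1}$ that, together with the factor from sum-to-max, produces the claimed $K \log K$.
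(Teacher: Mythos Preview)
Your proof is correct and takes a genuinely different route from the paper's. The paper rewrites $\log \sum_k a_k$ as $\frac{1}{\sum_k K a_k}\bigl[(\sum_k K a_k)\log\frac{\sum_k K a_k}{\sum_k 1}\bigr]$, applies the log-sum inequality to get $\frac{1}{\sum_k K a_k}\sum_k K a_k \log(K a_k)$, and then uses the hypothesis $a_k \geq 1/K$ to ensure $\log(K a_k) \geq 0$ so that each coefficient $\frac{K a_k}{\sum_j K a_j} \leq 1$ can be replaced by $1$. Your argument instead goes through the crude bound $\sum_k a_k \leq K \max_k a_k$ and then uses the hypothesis on the \emph{other} $K-1$ factors to convert the maximum into the full product, picking up exactly $K^{K-1}$. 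Your route is more elementary (no log-sum inequality needed) and arguably makes the role of the constant $K^K$ more transparent; the paper's route, on the other hand, shows how the result sits as a corollary of a standard information-theoretic inequality, which fits the paper's theme. Both use the hypothesis $a_k \geq 1/K$ in the same essential way, just at different points in the chain.
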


\begin{proof}
We apply the log-sum inequality and realize that it produces coefficients bounded by $1$.
\begin{align*}
\log \sum_k a_k &= \frac{1}{\sum_k K a_k} \left[\left( \sum_k K a_k \right) \log \frac{\sum_k K a_k}{\sum_k 1} \right]\\
 &\leq \frac{1}{\sum_k K a_k} \left[ \sum_k K a_k \log \frac{K a_k}{1} \right]\\
 &\leq \sum_k \log K a_k\\
 &= \sum_k \log a_k + K \log K.
\end{align*}
\end{proof}

\begin{lemma}\label{lem:marginal-meadian-mean-distance}
Let $X \sim P$, where $P$ is a probability distribution on $\R^d$ with marginal median vector $m_P$. Then
\begin{align*}
\| m_P - \E X\| &\leq \sqrt{d} \, \E \| X - \E X\|.
\end{align*}
\end{lemma}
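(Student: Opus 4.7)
The plan is to reduce the Euclidean inequality to a one-dimensional fact applied coordinate-wise. Write $\mu := \E X$ and $m := m_P$, so that $m_j$ is a median of the marginal distribution of $X_j$ and $\mu_j = \E X_j$ for each $j = 1, \ldots, d$. The key one-dimensional lemma is the classical mean--median inequality: for any real random variable $Y$ with mean $\E Y$ and any median $m(Y)$,
\begin{align*}
|m(Y) - \E Y| \leq \E |Y - \E Y|.
\end{align*}
I would derive this by noting that $|m(Y) - \E Y| = |\E(Y - m(Y))| \leq \E|Y - m(Y)|$ by Jensen's inequality (absolute value is convex), and then using that $c \mapsto \E|Y - c|$ is minimized over $c \in \R$ by any median $m(Y)$, so $\E |Y - m(Y)| \leq \E|Y - \E Y|$.

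Applying this coordinate-wise gives $|m_j - \mu_j| \leq \E |X_j - \mu_j|$ for each $j$. The second ingredient is the pointwise coordinate bound $|X_j - \mu_j| \leq \|X - \mu\|$, which after taking expectations yields $\E|X_j - \mu_j| \leq \E \|X - \mu\|$ for every $j$.

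Combining these, I would write
\begin{align*}
\|m - \mu\|^2 = \sum_{j=1}^d (m_j - \mu_j)^2 \leq \sum_{j=1}^d \bigl(\E |X_j - \mu_j|\bigr)^2 \leq d \bigl(\E \|X - \mu\|\bigr)^2,
\end{align*}
and then take square roots to obtain the claimed inequality $\|m_P - \E X\| \leq \sqrt{d}\, \E \|X - \E X\|$.

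There is no real obstacle here beyond quoting the variational characterization of the median; the only mild subtlety is that a median need not be unique, but the lemma is stated for any marginal median vector and the proof above works for every choice. The $\sqrt{d}$ factor is essentially tight and arises from the coarse but dimension-free coordinate bound $|X_j - \mu_j| \leq \|X - \mu\|$; no finer inequality is needed for the application in Section~\ref{sec:bhattacharyya-pseudo-penalty}.
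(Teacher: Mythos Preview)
Your proof is correct and follows essentially the same approach as the paper: both apply the one-dimensional mean--median inequality $|m_j - \mu_j| \leq \E|X_j - \mu_j|$ coordinate-wise (derived via Jensen and the variational property of the median) and then aggregate. The only cosmetic difference is in the aggregation step: the paper passes through the $\ell^1$ norm using $\|v\|_2 \leq \|v\|_1 \leq \sqrt{d}\,\|v\|_2$, whereas you square, use $|X_j - \mu_j| \leq \|X - \mu\|$ on each coordinate, and sum.
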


\begin{proof}
Superscripts indicate coordinates. We use subadditivity of square root and the fact that the median minimizes expected absolute deviation.
\begin{align*}
\| \E X - m_P \|  &\leq \| \E X - m_P \|_1\\
 &= \sum_{j=1}^d | \E X^{(j)} - m_P^{(j)}|\\
 &= \sum_{j=1}^d |\E (X^{(j)} - m_P^{(j)})|\\
 &\leq \sum_{j=1}^d \E | X^{(j)} - m_P^{(j)}|\\
 &\leq \sum_{j=1}^d \E | X^{(j)} - \E X^{(j)}|\\
 &\leq \sqrt{d} \, \E \| X - \E X \|.
\end{align*}
We used the fact that $l^1$ and $l^2$ satisfy $\| v \| \leq \| v \|_1 \leq \sqrt{d} \| v\|$.
\end{proof}

\subsection{Jensen differences}
For any random vector $Y$ and any function $f$, we will call $\E f(Y) - f(\E Y)$ a \emph{Jensen difference}.

\begin{lemma}\label{lem:jensen-difference-bounds}
Let $Y$ be a random vector with convex support $S \subseteq \R^d$. If $f: \R^d \rightarrow \R$ is twice continuously differentiable, then
\begin{align*}
\inf_{y \in S} \lambda_d( \nabla \nabla' f(y)) \leq \frac{\E f(Y) - f(\E Y)}{\V Y / 2} \leq \sup_{y \in S} \lambda_1( \nabla \nabla' f(y)).
\end{align*}
\end{lemma}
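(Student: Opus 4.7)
The plan is to prove both inequalities simultaneously via a second-order Taylor expansion of $f$ around $\mu := \E Y$, then reduce the resulting quadratic form to an eigenvalue bound using Lemma~\ref{lem:quadratic-form-bound}.

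First I would observe that since $S$ is convex and $Y$ is supported on $S$, the mean $\mu$ lies in $S$, and for every $y \in S$ the entire segment $\{\mu + t(y-\mu) : t \in [0,1]\}$ stays in $S$. Because $f$ is twice continuously differentiable on $\R^d$, Taylor's theorem with integral remainder gives
\begin{align*}
f(y) - f(\mu) - \nabla f(\mu)'(y - \mu) = \int_0^1 (1-t)\, (y-\mu)' \nabla \nabla' f(\mu + t(y-\mu)) (y-\mu) \, dt.
\end{align*}
Substitute $y = Y$, take expectation, and note that $\E(Y - \mu) = 0$ kills the linear term, leaving
\begin{align*}
\E f(Y) - f(\mu) = \E \int_0^1 (1-t)\, (Y-\mu)' \nabla \nabla' f(\mu + t(Y-\mu)) (Y-\mu) \, dt.
\end{align*}

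Next I would apply Lemma~\ref{lem:quadratic-form-bound} pointwise inside the integrand: for each $t \in [0,1]$ and each realization of $Y$, the argument $\mu + t(Y - \mu)$ lies in $S$ by convexity, so
\begin{align*}
\inf_{y \in S} \lambda_d(\nabla \nabla' f(y)) \cdot \|Y - \mu\|^2 \leq (Y-\mu)' \nabla \nabla' f(\mu + t(Y-\mu)) (Y-\mu) \leq \sup_{y \in S} \lambda_1(\nabla \nabla' f(y)) \cdot \|Y - \mu\|^2.
\end{align*}
Plugging these into the integral, using $\int_0^1 (1-t)\, dt = 1/2$, interchanging integration and expectation by Tonelli (both bounds are nonnegative after absorbing signs, or one can split into positive and negative parts), and recalling $\V Y = \E \|Y - \mu\|^2$, yields
\begin{align*}
\tfrac{1}{2}\,\V Y \cdot \inf_{y \in S} \lambda_d(\nabla \nabla' f(y)) \;\leq\; \E f(Y) - f(\mu) \;\leq\; \tfrac{1}{2}\,\V Y \cdot \sup_{y \in S} \lambda_1(\nabla \nabla' f(y)),
\end{align*}
and dividing through by $\V Y / 2$ gives the claim (the edge cases $\V Y = 0$ or the eigenvalue extremes being $\pm \infty$ are handled trivially by inspection).

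The main obstacle is bookkeeping rather than mathematical depth: one must verify that $\mu \in S$ so that the expansion is anchored inside $S$, justify the Fubini-type interchange when the eigenvalue bounds could in principle be infinite, and check that the segment $[\mu, Y]$ remains in the region over which the infimum/supremum of the Hessian eigenvalues is taken — all of which follow from convexity of $S$ and the stated continuous differentiability of $f$.
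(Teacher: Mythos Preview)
Your proposal is correct and follows essentially the same approach as the paper: a second-order Taylor expansion of $f$ about $\E Y$, followed by bounding the Hessian quadratic form via Lemma~\ref{lem:quadratic-form-bound} and then taking expectations. The only cosmetic difference is that you use the integral form of the remainder while the paper uses the Lagrange form; both yield the factor $\tfrac{1}{2}\V Y$ after the linear term vanishes.
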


\begin{proof}
We start with a second-order Taylor expansion with Lagrange remainder.
\begin{align*}
f(Y) &= f(\E Y) + (Y - \E Y)' \nabla f(\E Y) + \\ & \qquad \tfrac{1}{2} (Y - \E Y)' \nabla \nabla' f(\tilde{Y}) (Y - \E Y),
\end{align*}
for some $\tilde{Y}$ on the segment from $Y$ to $\E Y$. By Lemma~\ref{lem:quadratic-form-bound}, the quadratic form has the bounds
\begin{align*}
\| Y - \E Y \|^2 \lambda_d (\nabla \nabla' f(\tilde{Y})) & \leq (Y - \E Y)' \nabla \nabla' f(\tilde{Y}) (Y - \E Y) \\ & \leq \| Y - \E Y \|^2 \lambda_1 (\nabla \nabla' f(\tilde{Y})).
\end{align*}
The smallest and largest eigenvalues of the Hessian at $\tilde{Y}$ are bounded by the infimum of smallest eigenvalue and supremum of largest eigenvalue taken over the support of $Y$.
\begin{align*}
& \| Y - \E Y \|^2 \inf_{y \in S} \lambda_d (\nabla \nabla' f(y)) \\ & \leq (Y - \E Y)' \nabla \nabla' f(\tilde{Y}) (Y - \E Y) \\ & \leq \| Y - \E Y \|^2 \sup_{y \in S} \lambda_1 (\nabla \nabla' f(y)).
\end{align*}
Substituting this second-order Taylor expansion into $\E f(Y) - f(\E Y)$ gives the desired result.
\end{proof}

\subsection{Infimum on a grid}\label{sec:infimum-grid}

In many cases we will need to ensure that the infimum of a function on a grid of its domain approaches the overall infimum as the grid becomes increasingly refined. Lemma~\ref{lem:hessian-eigenvalue} will prove to be remarkably useful for such tasks.

\begin{lemma}\label{lem:hessian-eigenvalue}
Let $\Te \subseteq \Theta \subseteq \R^d$, and assume $f: \Theta \rightarrow \R$ is twice continuously differentiable. If $\theta$ is in the convex hull of $\Te \cap B(\theta, \delta)$, then
\begin{align*}
\inf_{\te \in \Te} f(\te) \leq f(\theta) + \frac{\delta^2}{2} \sup_{\tilde{\theta} \in B(\theta, \delta)} \lambda_1(\nabla \nabla' f(\tilde{\theta}))_+.
\end{align*}
\end{lemma}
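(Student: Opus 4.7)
The plan is to invoke Lemma~\ref{lem:jensen-difference-bounds} after constructing a discrete random vector $Y$ whose mean is $\theta$ and whose realizations lie in $\Te \cap B(\theta, \delta)$. Because $\theta$ is in the convex hull of $\Te \cap B(\theta, \delta)$, we may write
\begin{align*}
\theta = \sum_i w_i \te_i, \qquad w_i \geq 0, \qquad \sum_i w_i = 1,
\end{align*}
with each $\te_i \in \Te \cap B(\theta, \delta)$ (by Carathéodory, finitely many terms suffice). Let $Y$ equal $\te_i$ with probability $w_i$, so that $\E Y = \theta$ and $\V Y = \sum_i w_i \|\te_i - \theta\|^2 \leq \delta^2$.

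Next I would apply Lemma~\ref{lem:jensen-difference-bounds} with the convex set taken to be the ball $B(\theta, \delta)$ (which contains the support of $Y$), yielding
\begin{align*}
\E f(Y) \leq f(\theta) + \frac{\V Y}{2} \sup_{\tilde\theta \in B(\theta, \delta)} \lambda_1(\nabla \nabla' f(\tilde\theta)).
\end{align*}
Since $\E f(Y) = \sum_i w_i f(\te_i)$ is a weighted average of values of $f$ at points in $\Te$, we have $\inf_{\te \in \Te} f(\te) \leq \E f(Y)$. Combining these and using $\V Y \leq \delta^2$ gives the desired bound once the Hessian eigenvalue is replaced by its positive part. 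This replacement is legitimate: if $\sup_{\tilde\theta} \lambda_1(\nabla\nabla' f(\tilde\theta)) \geq 0$ it equals $\sup_{\tilde\theta} \lambda_1(\nabla\nabla' f(\tilde\theta))_+$, while if it is negative the inequality $\E f(Y) \leq f(\theta)$ already implies the conclusion since $\tfrac{\delta^2}{2}(\cdot)_+$ is then zero.

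The main subtlety — more a technicality than a real obstacle — is that Lemma~\ref{lem:jensen-difference-bounds} is stated for random vectors with convex support, whereas $Y$ here is discretely supported on finitely many points. Inspection of its proof reveals the convexity of $S$ is used only to ensure the Taylor-expansion midpoint $\tilde{Y}$, which lies on the segment from $Y$ to $\E Y$, belongs to the region over which the eigenvalue supremum is taken. Taking $S = B(\theta, \delta)$ suffices, since every segment from a support point $\te_i$ to $\theta$ stays inside that ball.
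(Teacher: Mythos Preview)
Your proof is correct and follows essentially the same approach as the paper: construct a random vector supported on $\Te \cap B(\theta,\delta)$ with mean $\theta$, bound the infimum by its expectation, and apply Lemma~\ref{lem:jensen-difference-bounds}. You are in fact more careful than the paper in addressing the convex-support hypothesis of Lemma~\ref{lem:jensen-difference-bounds} and in justifying the passage to the positive part.
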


% Need $B$ to be the closed ball for the convex hull condition, but only need to take infimum over the open ball - probably not worth distinguishing.

\begin{proof}
We first bound the infimum over $\Te$ by the infimum over $\Te \cap B(\theta, \delta)$. Then that infimum is bounded by the expectation using any distribution $Q$ on those grid-points. We have assumed that $\theta$ is some weighted average of nearby grid-points (the ones at most $\delta$ distance away), and we can use that same weighted averaging to define $Q$. Then the expectation of the random selection is $\theta$, and we apply Lemma~\ref{lem:jensen-difference-bounds}.
\begin{align*}
& \inf_{\te \in \Te} f(\te) \\ & \leq \inf_{\te \in \Te \cap B(\theta, \delta)} f(\te)
\\ & \leq \E_{\te \sim Q} f(\te)
\\ &\leq f(\E_{\te \sim Q} \te) + \\ & \qquad \tfrac{1}{2} \E_{\te \sim Q} \| \te - \E_{\te \sim Q} \te\|^2 \sup_{\tt \in B(\theta, \delta)} \lambda_1 (\nabla \nabla' f(\tt))
\\ &\leq f(\theta) + \tfrac{1}{2} \delta^2 \sup_{\tt \in B(\theta, \delta)} \lambda_1 (\nabla \nabla' f(\tt)),
\end{align*}
assuming $\lambda_1( \nabla \nabla' f(\tt))$ is non-negative. If the maximum eigenvalue is negative, i.e. if $f$ is strictly concave within the ball, then the second order term is upper bounded by zero.
\end{proof}

Suppose $\Te \subseteq \Theta \subseteq \R^d$ is an $\epsilon$-discretization, as defined in Section~\ref{sec:discrete}. If $\Theta$ is convex, then every $\theta$ in the convex hull of $\Te$ satisfies the conditions of Lemma~\ref{lem:hessian-eigenvalue} with $\epsilon\sqrt{d}$ as $\delta$. In particular, if every dimension of $\Theta$ is either $\R$ or a closed half-line, then there is an obvious $\epsilon$-discretization that makes Lemma~\ref{lem:hessian-eigenvalue} apply for every $\theta \in \Theta$. For less favorably shaped $\Theta$, one can consider adding more grid-points ``on top of'' an $\epsilon$-discretization.

\subsection{Behavior of $\mathbf{\r_{\Te, \L}^{(n)}(P)}$}\label{sec:behavior-r}

One way to bound $\r_{\Te, \L}^{(n)}(P)$ is to use an approach similar to Section~\ref{sec:infimum-grid}. Suppose $p_\theta(x)$ is twice continuously differentiable in $\theta$. We define a type of Fisher ``cross-information" matrix
\begin{align*}
I_P(\tt) := \E_{X \sim P} \nabla \nabla' \left[\log \frac{1}{\pt(X)} \right]_{\theta = \tt},
\end{align*}
where the Hessian is taken with respect to $\theta$. Note that if $\pt$ represents an exponential family, then $P$ does not play a role. In that case, $I_P(\tt)$ reduces to the ordinary Fisher information matrix.

Let $B(\theta, \delta)$ denote the closed Euclidean ball centered at $\theta$ with radius $\delta$, and let $\lambda_j(\cdot)$ denote the $j$th largest eigenvalue of its matrix argument.
\begin{theorem}\label{thm:fisher-cross-info}
Let $\Te \subseteq \Theta \subseteq \R^d$. Assume that $\L:\Theta \rightarrow \R$ is twice continuously differentiable and that $\pt(x)$ is twice continuously differentiable in $\theta$ for every fixed $x$ in its domain. If $\theta \in \Theta$ is in the convex hull of $\Te \cap B(\theta, \delta)$, then 
\begin{align*}
\r_{\Te, \L}^{(n)}(P) & \leq D(P \| \Pt) + \frac{\L(\theta)}{n} + \\ & \qquad \frac{\delta^2}{2} \sup_{\tt \in B(\theta, \delta)} \lambda_1 (I_P(\tt) + \tfrac{1}{n}\nabla \nabla' \L(\tt))_+ .
\end{align*}
\end{theorem}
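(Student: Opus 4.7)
The plan is to recognize the quantity $\r_{\Te, \L}^{(n)}(P)$ as the infimum over the grid $\Te$ of the function
\begin{align*}
f(\tt) := D(P \| P_{\tt}) + \tfrac{1}{n}\L(\tt),
\end{align*}
and then apply Lemma~\ref{lem:hessian-eigenvalue} directly. The hypotheses on $\L$ and on $\pt(x)$ are precisely what one needs for $f$ to be twice continuously differentiable, and the geometric condition that $\theta$ lie in the convex hull of $\Te \cap B(\theta,\delta)$ is exactly the hypothesis of that lemma.

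The key computational step is to identify the Hessian of $f$. Writing $D(P\|P_{\tt}) = \E_{X\sim P}\log p(X) + \E_{X\sim P}\log(1/p_{\tt}(X))$ and noting that the first term is constant in $\tt$, we would exchange $\nabla\nabla'$ with $\E_{X\sim P}$ on the second term to obtain
\begin{align*}
\nabla\nabla' D(P\|P_{\tt}) = \E_{X\sim P}\nabla\nabla'\log(1/p_{\tt}(X)) = I_P(\tt),
\end{align*}
which is the Fisher cross-information of the theorem's preamble. Adding the contribution from the penalty gives
\begin{align*}
\nabla\nabla' f(\tt) = I_P(\tt) + \tfrac{1}{n}\nabla\nabla'\L(\tt).
\end{align*}
Plugging this into the conclusion of Lemma~\ref{lem:hessian-eigenvalue} yields exactly the claimed bound
\begin{align*}
\inf_{\te\in\Te}f(\te) \leq f(\theta) + \tfrac{\delta^2}{2}\sup_{\tt\in B(\theta,\delta)}\lambda_1\bigl(I_P(\tt)+\tfrac{1}{n}\nabla\nabla'\L(\tt)\bigr)_+ .
\end{align*}

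The main obstacle is the differentiation-under-the-integral step needed to equate $\nabla\nabla' \E_{X\sim P}\log(1/p_{\tt}(X))$ with $\E_{X\sim P}\nabla\nabla'\log(1/p_{\tt}(X))$. The hypothesis in the statement only requires pointwise twice continuous differentiability of $p_{\tt}(x)$ in $\tt$; to justify swapping the differential operator with the expectation in full rigor, one typically invokes a dominated-convergence argument requiring a $P$-integrable envelope for the second partials of $\log(1/p_{\tt}(\cdot))$ on a neighborhood of each $\tt$. I would handle this either by stating it as an implicit regularity hypothesis (consistent with how $I_P(\tt)$ is introduced in the preamble) or by observing that in the exponential-family case, which is the application that motivates the lemma, the swap is automatic because the Hessian of $\log(1/p_{\tt}(x))$ does not depend on $x$ (it equals the covariance of the sufficient statistic under $P_{\tt}$). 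All other steps are just invoking the already-proved lemma, so once this technical exchange is granted the proof is essentially a one-line application.
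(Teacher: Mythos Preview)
Your proposal is correct and follows essentially the same route as the paper: both arguments reduce the bound to a second-order Taylor remainder over the ball $B(\theta,\delta)$ and then average over a distribution on nearby grid-points with mean $\theta$, exactly as in Lemma~\ref{lem:hessian-eigenvalue}. The only cosmetic difference is that the paper re-runs the lemma's argument on the random pointwise function $f_X(\tt) := \log\tfrac{p(X)}{p_{\tt}(X)} + \tfrac{1}{n}\L(\tt)$ and takes the expectation afterward, whereas you apply the lemma directly to the averaged function $f = \E f_X$; the interchange of $\nabla\nabla'$ and $\E$ that you correctly flag as a regularity assumption is equally implicit in the paper's identification of $\E\nabla\nabla' f_X(\tt)$ with $I_P(\tt)+\tfrac{1}{n}\nabla\nabla'\L(\tt)$.
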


\begin{proof}
Define $f_X(\theta) := \log \frac{p(X)}{\pt(X)} + \frac{\L(\theta)}{n}$, and let $X \sim P$. We use a second-order Taylor expansion at $\theta$ with Lagrange remainder and reason similarly to the proofs of Lemmas~\ref{lem:jensen-difference-bounds} and~\ref{lem:hessian-eigenvalue}.
\begin{align*}
\r_{\Te, \L}^{(n)}(P) &= \inf_{\te \in \Te} \E f_X(\te)\\
 &= \inf_{\te \in \Te} \E \Biggl( f_X(\theta) + (\te - \theta)' \nabla f_X(\theta) + \\ & \qquad \tfrac{1}{2}(\te - \theta)' [\nabla \nabla' f_X(\tt)] (\te - \theta) \Biggr)\\
 &= \inf_{\te \in \Te} \Biggl( \E f_X(\theta) + (\te - \theta)' \E \nabla f_X(\theta) + \\ & \qquad \tfrac{1}{2}(\te - \theta)' [\E \nabla \nabla' f_X(\tt)] (\te - \theta) \Biggr),
\end{align*}
for some $\tt$ between $\theta$ and $\te$.

The infimum is bounded by the expectation for any random $\te$ on the grid-points. In particular, use the distribution on neighboring grid-points that makes $\te$ have expectation $\theta$. The first-order term is elminated, while the second-order term is bounded by half the expected squared length of the vector $\te - \theta$ times the largest eigenvalue (if positive).
\end{proof}

When $\Te \subseteq \Theta$ is an $\epsilon$-discretization, we use $\epsilon \sqrt{d}$ as $\delta$.

\begin{corollary}\label{cor:index-of-resolvability}
Let $\Theta \subseteq \R^d$ be a convex parameter space having densities twice continuously differentiable in $\theta$. Let $\Te \subseteq \Theta$ be an $\epsilon$-discretization. For any $\theta$ in the convex hull of $\Te$,
\begin{align*}
\r_{\Te, \L}^{(n)}(P) &\leq D(P \| \Pt) + \frac{\L(\theta)}{n} + \\ & \qquad \frac{\epsilon^2 d}{2} \sup_{\tt \in B(\theta, \epsilon \sqrt{d})} \lambda_1 (I_P(\tt) + \tfrac{1}{n}\nabla \nabla' \L(\tt))_+ .
\end{align*}
\end{corollary}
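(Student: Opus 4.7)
The plan is to view Corollary~\ref{cor:index-of-resolvability} as an immediate specialization of Theorem~\ref{thm:fisher-cross-info}, with the choice $\delta = \epsilon\sqrt{d}$. All the analytic content (Taylor expansion, Jensen difference bound, infimum-over-grid argument) is already done inside that theorem; only a short geometric verification is required here.

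The hypothesis of Theorem~\ref{thm:fisher-cross-info} that must be checked is: for every $\theta$ in the convex hull of $\Te$, the point $\theta$ lies in the convex hull of $\Te \cap B(\theta,\epsilon\sqrt{d})$. The idea is that any $\theta \in \R^d$ sits inside some axis-aligned hypercube of side $\epsilon$ whose $2^d$ corners belong to the lattice $v + \epsilon\Z^d$, and each of those corners differs from $\theta$ in every coordinate by at most $\epsilon$, so lies within Euclidean distance $\epsilon\sqrt{d}$. When $\Theta = \R^d$ (or, more generally, when every dimension of $\Theta$ is $\R$ or a closed half-line and one picks the obvious aligned $\epsilon$-discretization), all $2^d$ corners automatically lie in $\Te$, so $\theta$ is trivially a convex combination of grid points inside $B(\theta,\epsilon\sqrt{d})$. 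For a general convex $\Theta$ with $\theta \in \Conv(\Te)$, one invokes the fact already asserted by the paper in the paragraph following Lemma~\ref{lem:hessian-eigenvalue}: $\theta$ can be exhibited as a convex combination of lattice points from $\Te$ all lying within $\epsilon\sqrt{d}$, via a simplicial subdivision argument locating $\theta$ inside a small simplex whose vertices are in $\Te$.

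With the hypothesis confirmed, I simply substitute $\delta = \epsilon\sqrt{d}$ into the conclusion of Theorem~\ref{thm:fisher-cross-info}. The second-order term becomes $\tfrac{\delta^2}{2}\,\sup_{\tt \in B(\theta,\delta)} \lambda_1(I_P(\tt) + \tfrac{1}{n}\nabla\nabla'\L(\tt))_+ = \tfrac{\epsilon^2 d}{2}\,\sup_{\tt \in B(\theta,\epsilon\sqrt{d})} \lambda_1(I_P(\tt) + \tfrac{1}{n}\nabla\nabla'\L(\tt))_+$, which matches the stated bound, while the leading terms $D(P\|\Pt) + \L(\theta)/n$ are unchanged.

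The main (and essentially only) obstacle is the geometric step when $\Theta$ has a complicated convex boundary and $\theta$ lies close to it: some of the $2^d$ nearest lattice corners may escape $\Te$, so the naive hypercube argument fails, and one has to fall back on a triangulation argument. Since the statement only quantifies over $\theta \in \Conv(\Te)$, this subtlety is absorbed into the hypothesis rather than requiring fresh work in this proof.
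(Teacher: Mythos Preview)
Your proposal is correct and matches the paper's own reasoning: the paper treats Corollary~\ref{cor:index-of-resolvability} as an immediate consequence of Theorem~\ref{thm:fisher-cross-info} by taking $\delta = \epsilon\sqrt{d}$, with the geometric hypothesis justified by exactly the remark following Lemma~\ref{lem:hessian-eigenvalue} that you cite. No separate proof is given in the paper, and your write-up is if anything more explicit about the hypercube/simplex verification than the paper itself.
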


If one uses discretization $\epsilon = a/\sqrt{n}$,
\begin{align*}
\r_{\Te, \L}^{(n)}(P) &\leq D(P \| \Pt) + \frac{\L(\theta) + a^2dz/2}{n},
\end{align*}
with $z := \sup_{\tt \in B(\theta, \sqrt{ad})} \lambda_1 (I_P(\tt) + \nabla \nabla' \L(\tt))_+$ which does not depend on $n$. Notice that this bound uses the $n=1$ version of the supremum term, because they cannot increase with $n$. Notice also that, in general, $z$ will increase with $d$. One could set $a^2 = 1/d$ to cancel out all dimension dependence, but that has an undesirable overall effect on the risk bound results put forward in this paper.

One will most likely want to invoke these results with $\Pt$ being the rI-projection of $P$ onto $\Theta$ if it exists. In particular, if $P$ is in the model, then we can let $\Pt$ be $P$ to get an exact bound of order $1/n$  for $\r_{\Te, \L}^{(n)}(P)$.

\subsection{Bounding summations over grid-points}\label{sec:gaussian-summation}

Lemmas~\ref{lem:gaussian-summation} and~\ref{lem:gaussian-summation-off-center} provide bounds for summations of Gaussian-shaped functions over $\epsilon$-discretizations of $\R^d$.

\begin{lemma}\label{lem:gaussian-summation}
Let $\Te$ be an $\epsilon$-discretization of $\R^d$. Then for any $c > 0$ and $v \in \Te$,
\begin{align*}
\sum_{\theta \in \Te} e^{-c \| \theta - v \|^2} \leq \left( 1 + \tfrac{\sqrt{\pi}}{\epsilon \sqrt{c}} \right)^d.
\end{align*}
\end{lemma}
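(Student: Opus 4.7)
The plan is to reduce the $d$-dimensional sum to a product of one-dimensional sums, then control each one-dimensional sum by integral comparison against the Gaussian integral.

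First I would exploit the translation and product structure of the grid. Since $v \in \Te$ and $\Te$ itself has the form $v_0 + \epsilon \Z^d$ for some shift $v_0$, the set $\Te - v$ is exactly $\epsilon \Z^d$. Re-indexing by $m = (\theta - v)/\epsilon \in \Z^d$ gives
\begin{align*}
\sum_{\theta \in \Te} e^{-c \| \theta - v \|^2} = \sum_{m \in \Z^d} e^{-c \epsilon^2 \| m \|^2} = \prod_{j=1}^d \left( \sum_{k \in \Z} e^{-c \epsilon^2 k^2} \right),
\end{align*}
where the factorization uses $\|m\|^2 = \sum_j m_j^2$ so that the exponential splits as a product across coordinates.

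Next I would bound the single one-dimensional sum $S := \sum_{k \in \Z} e^{-c \epsilon^2 k^2}$ by $1 + \sqrt{\pi}/(\epsilon \sqrt{c})$. The idea is a standard integral comparison: the function $x \mapsto e^{-c \epsilon^2 x^2}$ is symmetric about $0$ and decreasing on $[0, \infty)$, so for every integer $k \geq 1$ we have $e^{-c \epsilon^2 k^2} \leq \int_{k-1}^{k} e^{-c \epsilon^2 x^2}\, dx$, and summing this over $k \geq 1$ gives $\sum_{k \geq 1} e^{-c \epsilon^2 k^2} \leq \int_0^\infty e^{-c \epsilon^2 x^2}\, dx$. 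By symmetry the same bound holds for $k \leq -1$, so separating the $k = 0$ term,
\begin{align*}
S \leq 1 + 2 \int_0^\infty e^{-c \epsilon^2 x^2}\, dx = 1 + \frac{\sqrt{\pi}}{\epsilon \sqrt{c}},
\end{align*}
using the Gaussian integral $\int_0^\infty e^{-a x^2}\, dx = \tfrac{1}{2}\sqrt{\pi/a}$ with $a = c \epsilon^2$.

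Raising this one-dimensional bound to the $d$-th power yields the claim. The argument is essentially routine; the only place any care is needed is in observing that $v \in \Te$ is precisely what makes the shifted grid $\Te - v$ coincide with $\epsilon \Z^d$ so that the product factorization is exact. If the hypothesis $v \in \Te$ were dropped, one would still obtain the same bound up to a shift argument, but the clean product structure relies on $v$ being a grid point.
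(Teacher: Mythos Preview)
Your proof is correct and follows essentially the same route as the paper: translate so the grid becomes $\epsilon\Z^d$, factorize the $d$-dimensional sum into a product of one-dimensional sums, and bound each one-dimensional sum by separating the center term and controlling the tails via the Gaussian integral. The only cosmetic difference is the order of presentation (you factorize first and then handle the one-dimensional case, whereas the paper does the reverse), and you are slightly more explicit about the monotonicity-based integral comparison.
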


\begin{proof}
We can assume without loss of generality that $v$ is the zero vector and that $\Te$ includes zero. First, consider the one-dimensional problem. The ``center" term equals $1$ and the sum of the other terms is bounded by a Gaussian integral.
\begin{align*}
\sum_{\theta \in \Te} e^{- c \theta^2} &= \sum_{\theta \in \Te} e^{-c \epsilon^2 (\theta/\epsilon)^2}\\
 &= \sum_{z \in \Z} e^{-c \epsilon^2 z^2}\\
 &\leq 1 + \int_\R e^{- c\epsilon^2 z^2} dz\\
 &= 1 + \frac{\sqrt{\pi}}{\epsilon \sqrt{c}}.
\end{align*}

The $d$-dimensional problem can be bounded in terms of $d$ instances of the one-dimensional problem. Let $\Te^{(1)}, \ldots, \Te^{(d)}$ represent the underlying discretizations of $\R$, so that $\Te = \prod_j \Te^{(j)}$.
\begin{align*}
\sum_{\theta \in \Te} e^{- c \|\theta\|^2} &= \sum_{\theta \in \Te} e^{- c \sum_j \theta_j^2}\\
 &= \sum_{\theta_1 \in \Te^{(1)}} \ldots \sum_{\theta_d \in \Te^{(d)}} \prod_j e^{- c \theta_j^2}\\
 &= \prod_j \sum_{\theta_j \in \Te^{(j)}} e^{- c \theta_j^2}\\
 &\leq \prod_j \left( 1 + \tfrac{\sqrt{\pi}}{\epsilon \sqrt{c}} \right)\\
  &= \left( 1 + \tfrac{\sqrt{\pi}}{\epsilon \sqrt{c}} \right)^d.
\end{align*}
\end{proof}

Similar reasoning provides a slightly larger bound if the peak of the Gaussian function is not necessarily in the discretization.
\begin{lemma}\label{lem:gaussian-summation-off-center}
Let $\Te$ be an $\epsilon$-discretization of $\R^d$. Then for any $c > 0$ and $v \in \R^d$,
\begin{align*}
\sum_{\theta \in \Te} e^{-c \| \theta - v \|^2} \leq \left( 1 + \tfrac{2\sqrt{\pi}}{\epsilon \sqrt{c}} \right)^d.
\end{align*}
\end{lemma}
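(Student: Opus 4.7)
The plan is to mimic the two-step structure of the proof of Lemma~\ref{lem:gaussian-summation}: first reduce the $d$-dimensional problem to a product of $d$ one-dimensional problems via the product structure of $\Te$, and then handle the off-center issue in one dimension by replacing $v$ with a nearest grid point, absorbing the resulting slack into the constant $c$.

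For the first step, any $\epsilon$-discretization $\Te \subseteq \R^d$ has the product form $\Te = \prod_{j=1}^d \Te^{(j)}$ with each $\Te^{(j)}$ an $\epsilon$-discretization of $\R$; since $\|\theta - v\|^2 = \sum_j (\theta_j - v_j)^2$, the summand factorizes and
\begin{align*}
\sum_{\theta \in \Te} e^{-c\|\theta - v\|^2} = \prod_{j=1}^d \sum_{\theta_j \in \Te^{(j)}} e^{-c(\theta_j - v_j)^2}.
\end{align*}
It therefore suffices to prove the one-dimensional bound $\sum_{\theta_j \in \Te^{(j)}} e^{-c(\theta_j - v_j)^2} \leq 1 + 2\sqrt{\pi}/(\epsilon\sqrt{c})$, for arbitrary $v_j \in \R$.

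For the one-dimensional problem, let $v' \in \Te^{(j)}$ be a grid point nearest to $v_j$, so that $|v_j - v'| \leq \epsilon/2$. For any $\theta_j \in \Te^{(j)}$ with $\theta_j \neq v'$ the distinct-grid-point spacing gives $|\theta_j - v'| \geq \epsilon$, so the triangle inequality yields
\begin{align*}
|\theta_j - v_j| \geq |\theta_j - v'| - |v' - v_j| \geq |\theta_j - v'| - \epsilon/2 \geq \tfrac{1}{2}|\theta_j - v'|.
\end{align*}
Squaring gives $(\theta_j - v_j)^2 \geq \tfrac{1}{4}(\theta_j - v')^2$, which also holds trivially at $\theta_j = v'$. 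Hence termwise $e^{-c(\theta_j - v_j)^2} \leq e^{-(c/4)(\theta_j - v')^2}$, and since $v' \in \Te^{(j)}$ we may apply Lemma~\ref{lem:gaussian-summation} in one dimension with $c$ replaced by $c/4$ to get
\begin{align*}
\sum_{\theta_j \in \Te^{(j)}} e^{-c(\theta_j - v_j)^2} \leq \sum_{\theta_j \in \Te^{(j)}} e^{-(c/4)(\theta_j - v')^2} \leq 1 + \frac{\sqrt{\pi}}{\epsilon\sqrt{c/4}} = 1 + \frac{2\sqrt{\pi}}{\epsilon\sqrt{c}}.
\end{align*}
Taking the $d$-fold product produces the claimed bound.

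There is no serious obstacle: Lemma~\ref{lem:gaussian-summation} already does the heavy lifting, and the only new ingredient is the triangle-inequality reduction to an on-grid center, which is precisely what accounts for the factor-of-two enlargement in the constant compared with Lemma~\ref{lem:gaussian-summation}. One could try to avoid this loss (e.g., via Poisson summation, which shows the sum is maximized when $v$ is on the grid), but the simple reduction above is sufficient for the bounds used elsewhere in the paper.
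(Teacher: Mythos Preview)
Your proof is correct. The $d$-dimensional factorization is identical to the paper's, but the one-dimensional step differs: the paper introduces a refined $(\epsilon/2)$-grid $\Theta_{\epsilon/2}^*$ that contains $v$, argues that each point of the original $\epsilon$-grid can be translated inward to a distinct point of the refined grid (so the sum only increases), and then applies Lemma~\ref{lem:gaussian-summation} at spacing $\epsilon/2$. You instead keep the original grid, snap the center to the nearest grid point $v'$, and use the triangle inequality to show $(\theta_j - v_j)^2 \geq \tfrac{1}{4}(\theta_j - v')^2$, which lets you apply Lemma~\ref{lem:gaussian-summation} with $c/4$ in place of $c$. Both routes lose exactly the same factor of two in the constant. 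Your argument is arguably cleaner---the injectivity of the paper's ``translate inward'' map is left implicit there, whereas your reduction is fully explicit---while the paper's grid-refinement idea is the one that generalizes to the later tail-summation lemmas (Lemma~\ref{lem:tail-summation}).
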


\begin{proof}
Again, we begin with the one-dimensional problem. The closest point to $v$ contributes at most $1$ to the sum. We reduce to Lemma~\ref{lem:gaussian-summation} by comparison to $\Theta_{\epsilon/2}^*$, the $(\epsilon/2)$-grid that includes $v$. Each point on the original grid can be translated ``inward'' to a neighboring point on the new (more refined) grid. The sum over the new grid's points will be larger than the sum over the original grid's points.
\begin{align*}
\sum_{\theta \in \Te} e^{- c (\theta - v)^2} &\leq \sum_{\theta \in \Theta_{\epsilon/2}^*} e^{- c (\theta - v)^2}\\
 &\leq 1 + \frac{\sqrt{\pi}}{(\epsilon/2)\sqrt{c}}.
\end{align*}

As before, the $d$-dimensional problem reduces to the one-dimensional problem.
\begin{align*}
\sum_{\theta \in \Te} e^{- c \|\theta - v\|^2} &= \prod_j \sum_{\theta_j \in \Te^{(j)}} e^{- c (\theta_j - v_j)^2}\\
 &\leq \prod_j \left( 1 + \tfrac{2\sqrt{\pi}}{\epsilon \sqrt{c}} \right)\\
  &= \left( 1 + \tfrac{2\sqrt{\pi}}{\epsilon \sqrt{c}} \right)^d.
\end{align*}
\end{proof}

There are important situations in which a function can be bounded by one type of behavior near its peak and by another type of behavior further away. When the tail behavior has a spherically symmetric bound, Lemma~\ref{lem:tail-summation} can help us convert the summation problem into a one-dimensional integral.

\begin{lemma}\label{lem:tail-summation}
Let $f$ be a real-valued function of the form $f(\theta) = g(\| \theta - \ts \|)$ for some non-increasing and non-negative function $g$. Let $\Te$ be an $\epsilon$-discretization of $\Theta \subseteq \R^d$. For any radius $R \geq 3\epsilon$,
\begin{align*}
\sum_{\theta \in \Te \cap B(\ts, R)^c} f(\theta) &\leq \frac{2\pi^{d/2}}{(\epsilon/4)^d\Gamma(d/2)} \int_{R/4}^\infty g(r) r^{d-1} dr,
\end{align*}
if the integral is well-defined.
\end{lemma}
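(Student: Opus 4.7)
The plan is to prove this by a standard integral-comparison argument. To each grid point $\theta \in \Te \cap B(\ts, R)^c$ I would associate a region $N_\theta \subseteq \R^d$ of volume $(\epsilon/4)^d$ so that (i) the $N_\theta$ are pairwise disjoint, (ii) $N_\theta \subseteq \{x : \|x - \ts\| \geq R/4\}$, and (iii) $\|x - \ts\| \leq \|\theta - \ts\|$ for every $x \in N_\theta$. Since $g$ is non-increasing and non-negative, (iii) gives $g(\|\theta - \ts\|) \leq g(\|x - \ts\|)$ on $N_\theta$, so
\begin{align*}
(\epsilon/4)^d \sum_{\theta} g(\|\theta - \ts\|) & \leq \sum_{\theta} \int_{N_\theta} g(\|x - \ts\|) \, dx \\
& \leq \int_{\{x : \|x - \ts\| \geq R/4\}} g(\|x - \ts\|) \, dx.
\end{align*}
Rewriting the right-hand side in spherical coordinates yields $\frac{2\pi^{d/2}}{\Gamma(d/2)} \int_{R/4}^\infty g(r) r^{d-1} \, dr$, the surface area of the unit $(d-1)$-sphere times the radial integral. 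Dividing through by $(\epsilon/4)^d$ produces the claimed bound.

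The construction I would use sets $\psi(\theta) := \ts + \tfrac{1}{2}(\theta - \ts)$, the midpoint of the segment from $\ts$ to $\theta$, and takes $N_\theta$ to be an axis-aligned cube of side $\epsilon/4$ centered at $\psi(\theta)$. Disjointness follows because distinct $\theta_1, \theta_2 \in \Te$ differ by at least $\epsilon$ in sup-norm, so their midpoints differ by at least $\epsilon/2$ in sup-norm, which exceeds the cube's side $\epsilon/4$. For any $x \in N_\theta$, the triangle inequality and the $l^2$ displacement bound $\|x - \psi(\theta)\| \leq \epsilon\sqrt{d}/8$ give
\[
\tfrac{1}{2}\|\theta - \ts\| - \tfrac{\epsilon\sqrt{d}}{8} \leq \|x - \ts\| \leq \tfrac{1}{2}\|\theta - \ts\| + \tfrac{\epsilon\sqrt{d}}{8},
\]
from which (ii) and (iii) follow via the assumed slack $R \geq 3\epsilon$: the lower bound exceeds $R/4$, and the upper bound stays below $\|\theta - \ts\|$ (using $\|\theta - \ts\| \geq R$).

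The main technical subtlety is the dimension-dependent slack in the inequalities above — the $\sqrt{d}$ factor from converting the cube's $l^\infty$ side length into an $l^2$ radius must be absorbed by $R \geq 3\epsilon$. If this becomes tight in very high dimensions, a cleaner alternative is to take $N_\theta$ to be the Euclidean ball of radius $\epsilon/4$ centered at $\psi(\theta)$, which removes the $\sqrt{d}$ dependence entirely at the cost of replacing the cube volume $(\epsilon/4)^d$ with the ball volume $V_d(\epsilon/4) = \tfrac{2\pi^{d/2}}{d \Gamma(d/2)}(\epsilon/4)^d$; the same integral comparison and spherical-coordinate reduction then deliver the bound in the same form, with the multiplicative constant tracked explicitly through the factor $d$ arising from $\omega_{d-1}/V_d(\epsilon/4)$.
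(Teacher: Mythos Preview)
Your overall strategy---associate to each grid point a small region closer to $\ts$ and compare to an integral---is the right idea, but neither of your two concrete constructions proves the lemma as stated.

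For the cube construction, the dimension dependence is not merely a ``subtlety'' that might become tight; it breaks the argument outright. Property (ii) requires $\tfrac{R}{2} - \tfrac{\epsilon\sqrt{d}}{8} \geq \tfrac{R}{4}$, i.e.\ $R \geq \tfrac{\epsilon\sqrt{d}}{2}$, and the hypothesis only gives $R \geq 3\epsilon$. So the cube argument is valid only for $d \leq 36$, and the lemma is claimed for all $d$.

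Your ball alternative does fix (ii) and (iii) cleanly, but it does \emph{not} recover the stated constant. The comparison yields
\[
\sum_\theta f(\theta) \;\leq\; \frac{1}{V_d(\epsilon/4)} \cdot \omega_{d-1} \int_{R/4}^\infty g(r)r^{d-1}\,dr \;=\; \frac{d}{(\epsilon/4)^d}\int_{R/4}^\infty g(r)r^{d-1}\,dr,
\]
whereas the lemma claims the prefactor $\omega_{d-1}/(\epsilon/4)^d = \tfrac{2\pi^{d/2}}{(\epsilon/4)^d\Gamma(d/2)}$. Since $\omega_{d-1} \to 0$ as $d \to \infty$ while your prefactor grows, your bound is strictly weaker for large $d$ and does not imply the lemma (nor the downstream Lemma~\ref{lem:tail-summation-with-stirling}, which relies on the $\omega_{d-1}$ form).

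The paper avoids both problems by working in $l^\infty$ geometry throughout. It first translates the original grid coordinate-wise onto a refined $(\epsilon/2)$-grid $\Theta^*_{\epsilon/2}$ that contains $\ts$, then uses a combinatorial shell-counting argument: the number of points in the $j$th $l^\infty$-shell of $\Theta^*_{\epsilon/2}$ is at most the number of $(\epsilon/4)$-hypercubes in the $(j{+}1)$st shell of a further-refined grid $\Theta^*_{\epsilon/4}$, and those hypercubes lie strictly closer to $\ts$ (this is where $R \geq 3\epsilon$ enters, with no $\sqrt{d}$). Only after bounding by a sum of hypercube averages does the argument pass to the Euclidean ball and spherical coordinates, at which point the volume factor is exactly $(\epsilon/4)^d$ and the surface-area factor $\omega_{d-1}$ appears as stated.
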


\begin{proof}
First, we bound the summation outside the ball with diameter $2R$ by the summation outside the coordinate-axes-aligned hypercube inscribed by the ball, which has sides of length $\sqrt{2} R$. Next, we introduce a new more refined grid $\Theta_{\epsilon/2}^*$, which is the $\epsilon/2$-discretization that includes the point $\ts$. Consider the hyperplanes of $\Te$ grid-points orthogonal to the first coordinate axis. One can ``translate inward" each of these hyperplanes to a hyperplane in $\Theta_{\epsilon/2}^*$ that is closer to $\ts$. The argument can be repeated for each coordinate axis in turn. Because have assumed $R \geq 3\epsilon$, each \emph{translated} point outside of the $\sqrt{2} R$ hypercube remains outside of the $R$ hypercube. Thus it suffices to sum over the points of $\Theta_{\epsilon/2}^*$ outside of the hypercube of side-length $R$. For the remainder of this proof, we will assume without loss of generality that $\ts$ is the zero vector.

We will complete our proof by bounding the function's values at each point by its average value over a unique hypercube closer to zero. Given the standard $\epsilon$-discretization of $\R^d$, let $j$ index \emph{shells} radiating outward from the origin. The $j$th shell comprises the grid-points on the boundary of the centered hypercube of side-length $2j\epsilon$, along with the boundary hypercubes of volume $\epsilon^d$ demarcated by those grid-points. (We will consider the origin point itself to be the $0$th shell.) The total number of points in the first $j$ shells is $(2j+1)^d$, so the number of points in the $(j+1)$st shell is $[2(j+1)+1]^d - [2j+1]^d$. Similarly, the number of hypercubes in the $(j+1)$st shell is $[2(j+1)]^d - [2j]^d$. Because $t \mapsto t^d$ is convex and increasing on $\R^+$,
\begin{align*}
[2(j+1)+1]^d - [2j+1]^d &\leq [2(j+1)+2]^d - [2j+2]^d\\
 &= [2(j+2)]^d - [2(j+1)]^d.
\end{align*}
In other words, the number of points in the $(j+1)$st shell is no greater than the number of hypercubes in the $(j+2)$nd shell.

Finally, we introduce yet another grid, $\Theta_{\epsilon/4}^*$. We know that the number of points in the $j$th shell of $\Theta_{\epsilon/2}^*$ is bounded by the number of hypercubes in the $(j+1)$st shell of $\Theta_{\epsilon/4}^*$. If we can establish that these hypercubes are closer to the origin than are the points in the $j$th shell of $\Theta_{\epsilon/2}^*$, then we can bound the sum of the points' function values by the sum of the hypercubes' \emph{average} function values.

The points comprising the $j$th shell of $\Theta_{\epsilon/2}^*$ are inscribed by a sphere of radius $j\epsilon/2$; that sphere is inscribed by a hypercube of radius $\tfrac{j\epsilon/2}{\sqrt{2}}$. As $j$ increases, the $(j+1)$st shell of $\Theta_{\epsilon/4}^*$ will be about half as far from the origin as the the $j$th shell of $\Theta_{\epsilon/2}^*$. Because we have assumed $R \geq 3\epsilon$, the smallest $j$ we will need to worry about is $j=3$. The third shell of $\Theta_{\epsilon/2}^*$ has distance $3\epsilon/2$ from the origin, while the $4$th shell of $\Theta_{\epsilon/4}^*$ has distance $\epsilon$ from the origin. As $\epsilon < \tfrac{3\epsilon/2}{\sqrt{2}}$, \emph{every hypercube} in the $4$th shell of $\Theta_{\epsilon/4}^*$ is entirely closer to the origin than \emph{any point} in the $3$rd shell of $\Theta_{\epsilon/2}^*$. The comparison continues to hold for all $j \geq 3$.

The average value of $f$ on a hypercube within $\Theta_{\epsilon/4}^*$ is equal to the integral over that region divided by the hypervolume $(\epsilon/4)^d$. The inner-most hypercube that we need to consider is a distance of $R/4$ from the origin. We let $H_0(z)$ denote the coordinate-axes-aligned hypercube centered at the origin with side-length $2z$; we will need to integrate over the complement of this hypercube. Because $g/(\epsilon/4)^d$ is non-negative, we can bound this integral by the integral over a larger region, the complement of a ball. We then use spherical symmetry to reduce the problem to a one-dimensional integral.
\begin{align*}
& \frac{1}{(\epsilon/4)^d} \int_{H_0(R/4)^c} g(\|\theta\|) d\theta \\ &\leq \frac{1}{(\epsilon/4)^d} \int_{B(0, R/4)^c} g(\|\theta\|) d\theta
\\ & = \frac{1}{(\epsilon/4)^d} \int_{R/4}^\infty g(r) S_r dr
\\ &\leq \frac{2\pi^{d/2}}{(\epsilon/4)^d\Gamma(d/2)} \int_{R/4}^\infty g(r) r^{d-1} dr,
\end{align*}
where $S_r$ is the ``surface area" of any ball in $\R^d$ with radius $r$, which is $2\pi^{d/2}r^{d-1}/\Gamma(d/2)$.
\end{proof}

A more manageable quantity for the right-hand-side of Lemma~\ref{lem:tail-summation} can be derived.
\begin{lemma}\label{lem:tail-summation-with-stirling}
For any $\epsilon, d > 0$,
\begin{align*}
\frac{2\pi^{d/2}}{(\epsilon/4)^d\Gamma(d/2)} &\leq \left(\frac{20}{\epsilon \sqrt{d}}\right)^d.
\end{align*}
\end{lemma}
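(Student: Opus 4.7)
The plan is to reduce the inequality to a lower bound on $\Gamma(d/2)$ and then apply Stirling's formula. Taking $\epsilon$-th powers and rearranging, the claim is equivalent to
\[
\Gamma(d/2) \;\geq\; \frac{2 \pi^{d/2} \cdot 4^d \cdot d^{d/2}}{20^d} \;=\; 2 \left(\frac{\pi d}{25}\right)^{d/2},
\]
since $4^d d^{d/2}/20^d = (\sqrt{d}/5)^d = d^{d/2}/25^{d/2}$. So the whole task is to give a good enough lower bound for $\Gamma(d/2)$.

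I would invoke the standard Robbins/Stirling lower bound $\Gamma(x) \geq \sqrt{2\pi}\, x^{x-1/2} e^{-x}$, valid for all $x > 0$ because the Stirling remainder $\mu(x)$ is strictly positive. Substituting $x = d/2$ gives
\[
\Gamma(d/2) \;\geq\; \sqrt{\tfrac{4\pi}{d}} \left(\tfrac{d}{2e}\right)^{d/2}.
\]
Comparing to the target, it then suffices to establish
\[
\sqrt{\tfrac{4\pi}{d}} \left(\tfrac{d}{2e}\right)^{d/2} \;\geq\; 2 \left(\tfrac{\pi d}{25}\right)^{d/2},
\]
which after cancelling the factor of $2$ and dividing by $d^{d/2}$ reduces to the single-variable inequality
\[
\left(\tfrac{2\pi e}{25}\right)^{d/2} \;\leq\; \sqrt{\tfrac{\pi}{d}} \qquad \text{for all } d > 0.
\]

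Since $2\pi e/25 \approx 0.683 < 1$, the left-hand side decays geometrically in $d$ while the right-hand side decays only like $d^{-1/2}$, so the inequality is easy for large $d$; and as $d \to 0^+$ the right-hand side blows up while the left-hand side tends to $1$, so it is easy for small $d$ as well. To cover the intermediate range, I would take logarithms and look at
\[
h(d) := \tfrac{1}{2} \log(\pi/d) - \tfrac{d}{2} \log(2\pi e/25).
\]
Then $h'(d) = -1/(2d) - \tfrac{1}{2} \log(2\pi e/25)$ vanishes at the unique point $d^* = 1/\log(25/(2\pi e)) \approx 2.62$, which is the global minimizer of $h$. A direct numerical check shows $h(d^*) \approx 0.59 > 0$, establishing the inequality with slack for every $d > 0$.

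The only nontrivial step is the verification of the scalar inequality in the last paragraph, and even there the margin is comfortable; the specific constant $20$ in the statement was engineered precisely so that the Stirling bound would produce the positive gap at $d^*$. There is no serious obstacle beyond a careful bookkeeping of constants.
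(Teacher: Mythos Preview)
Your proof is correct and takes essentially the same approach as the paper: both reduce the claim to a Stirling lower bound $\Gamma(d/2) \geq \sqrt{2\pi}(d/2)^{d/2-1/2}e^{-d/2}$ and then verify the residual scalar inequality. The only cosmetic difference is in that last step: the paper absorbs the leftover polynomial factor via the auxiliary bound $\sqrt{d} \leq (2.9/2)^{d/2}$ and then observes that the resulting constant rounds up to $20$, whereas you directly minimize $h(d) = \tfrac{1}{2}\log(\pi/d) - \tfrac{d}{2}\log(2\pi e/25)$ over $d>0$; both arguments cash out the same numerical margin.
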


\begin{proof}
\cite[Theorem 1]{jameson2015} provides a Stirling lower bound for the gamma function:
\begin{align*}
\Gamma(d/2) \geq \frac{\sqrt{2\pi} (d/2)^{d/2}}{e^{d/2} \sqrt{d/2}}.
\end{align*}
We also upper bound $\sqrt{d}$ by $(2.9/2)^{d/2}$. The overall bound of $(20/\epsilon\sqrt{d})^d$ comes from rounding numbers up to the nearest integer.
\end{proof}

Next, we apply Lemma~\ref{lem:tail-summation-with-stirling} to power decay functions.
\begin{lemma}\label{lem:power-function-summation}
Let $\Te$ be an $\epsilon$-discretization of $\R^d$. For any $R \geq 3\epsilon$ and $q > d$,
\begin{align*}
\sum_{\theta \in \Te \cap B(\ts, R)^c} \frac{1}{\| \theta - \ts \|^q} &\leq \left(\frac{20}{\epsilon \sqrt{d}} \right)^d \frac{(4/R)^{q-d}}{q-d}.
\end{align*}
\end{lemma}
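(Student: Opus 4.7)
The plan is to apply Lemma~\ref{lem:tail-summation} with the non-increasing non-negative function $g(r) = 1/r^q$ and then simplify. Since $f(\theta) := 1/\|\theta - \ts\|^q = g(\|\theta - \ts\|)$ has the form required by that lemma, and we are given $R \geq 3\epsilon$, we immediately obtain
\begin{align*}
\sum_{\theta \in \Te \cap B(\ts, R)^c} \frac{1}{\|\theta - \ts\|^q} \leq \frac{2\pi^{d/2}}{(\epsilon/4)^d \Gamma(d/2)} \int_{R/4}^\infty \frac{r^{d-1}}{r^q}\, dr.
\end{align*}

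Next I would dispose of the prefactor by citing Lemma~\ref{lem:tail-summation-with-stirling}, which gives
\begin{align*}
\frac{2\pi^{d/2}}{(\epsilon/4)^d \Gamma(d/2)} \leq \left( \frac{20}{\epsilon \sqrt{d}} \right)^d.
\end{align*}
The remaining one-dimensional integral is elementary. Since the hypothesis $q > d$ ensures $d-1-q < -1$, the antiderivative $r^{d-q}/(d-q)$ vanishes as $r \to \infty$ and we get
\begin{align*}
\int_{R/4}^\infty r^{d-1-q}\, dr = \frac{(R/4)^{d-q}}{q-d} = \frac{(4/R)^{q-d}}{q-d}.
\end{align*}

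Multiplying these two bounds yields exactly the stated inequality. There is no real obstacle here, since all the heavy lifting — the shell-comparison argument that converts the sum to an integral over the complement of a ball, and the Stirling estimate for $\Gamma(d/2)$ — is already packaged into Lemmas~\ref{lem:tail-summation} and~\ref{lem:tail-summation-with-stirling}. The only thing to verify carefully is the convergence condition $q > d$ under which the improper integral is finite, and the sign bookkeeping that turns $(R/4)^{d-q}/(d-q)$ into $(4/R)^{q-d}/(q-d)$.
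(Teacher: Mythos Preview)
Your proposal is correct and follows exactly the same route as the paper: apply Lemma~\ref{lem:tail-summation} with $g(r)=r^{-q}$, bound the prefactor via Lemma~\ref{lem:tail-summation-with-stirling}, and evaluate the elementary integral using $q>d$. There is nothing to add.
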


\begin{proof}
This is a straight-forward application of Lemmas~\ref{lem:tail-summation} and~\ref{lem:tail-summation-with-stirling}.
\begin{align*}
\sum_{\theta \in \Te \cap B(\ts, R)^c} \frac{1}{\| \theta - \ts \|^q} &\leq \left(\frac{20}{\epsilon \sqrt{d}} \right)^d \int_{R/4}^\infty \frac{r^{d-1}}{r^q} dr\\
 &= \left(\frac{20}{\epsilon \sqrt{d}} \right)^d \left[ \frac{r^{d-q}}{d-q} \right]_{R/4}^\infty\\
 &= \left(\frac{20}{\epsilon \sqrt{d}} \right)^d \frac{(R/4)^{d-q}}{q-d}.
\end{align*}
\end{proof}

In our applications, the decaying functions will often be taken to the $\alpha n$ power for some $\alpha \in [0, 1]$. For power decay in that case, Lemma~\ref{lem:power-function-summation} can be used to derive a bound that is exponential in dimension and is stable if $\epsilon$ is proportional to $1/\sqrt{n}$.

\begin{lemma}\label{lem:power-decay-without-epsilon}
Assume $n \geq (d+1)/\alpha b$, $a > 0$, and $R \geq 4a^{1/b} \vee 3\epsilon$. Then
\begin{align*}
& \sum_{\theta \in \Te \cap B(\ts, R)^c} \left( \frac{a}{\| \theta - \ts \|^b} \right)^{\alpha n} \\ &\leq \left( \frac{4R}{\epsilon \sqrt{n \alpha b \log(R/4a^{1/b})}} \right)^d.
\end{align*}
\end{lemma}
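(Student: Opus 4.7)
The plan is to apply Lemma~\ref{lem:power-function-summation} with exponent $q = \alpha n b$. The hypothesis $n \geq (d+1)/(\alpha b)$ ensures $\alpha n b \geq d+1 > d$, and $R \geq 3\epsilon$ is given, so the lemma's assumptions are met. This yields
\[
\sum_{\theta \in \Te \cap B(\ts, R)^c} \left(\frac{a}{\|\theta - \ts\|^b}\right)^{\alpha n} \leq a^{\alpha n}\left(\frac{20}{\epsilon\sqrt{d}}\right)^d \frac{(4/R)^{\alpha n b - d}}{\alpha n b - d}.
\]

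Next I would refactor by pulling out an $R^d$: the leftover $a^{\alpha n}(4/R)^{\alpha n b}$ simplifies to $(4a^{1/b}/R)^{\alpha n b} = e^{-\alpha n b L}$, where $L := \log(R/(4a^{1/b})) \geq 0$ by the hypothesis $R \geq 4a^{1/b}$. The bound becomes
\[
\left(\frac{5R}{\epsilon\sqrt{d}}\right)^d \cdot \frac{e^{-\alpha n b L}}{\alpha n b - d}.
\]

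The crux is to trade the exponential factor for the inverse polynomial $(\alpha n b L)^{-d/2}$ that appears in the target. The elementary calculus fact that $y \mapsto y^{d/2} e^{-y}$ has maximum value $(d/(2e))^{d/2}$ (attained at $y = d/2$), applied at $y = \alpha n b L$, gives
\[
e^{-\alpha n b L} \leq \frac{d^{d/2}}{(2e)^{d/2}(\alpha n b L)^{d/2}}.
\]
Substituting and collecting terms algebraically reduces the bound to $\left(\frac{5R}{\epsilon\sqrt{2e}\cdot \sqrt{n\alpha b L}}\right)^d / (\alpha n b - d)$. Absorbing the leftover denominator via $\alpha n b - d \geq 1$ and using the numerical inequality $5/\sqrt{2e} < 4$ (equivalent to $25 < 32e$) delivers the stated bound.

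The main obstacle is not really a conceptual one but rather one of constants: Lemma~\ref{lem:power-function-summation} carries the factor $20$ (inherited from the Stirling-type bound on $\Gamma(d/2)$ in Lemma~\ref{lem:tail-summation-with-stirling}), whereas the target bound contains only a $4$. The reconciliation hinges on the single trick of extracting a $d/2$-power of $\alpha n b L$ from the exponential: this simultaneously produces the $1/\sqrt{n\alpha b L}$ factor required in the denominator of the claim \emph{and} contributes a $(2e)^{-d/2}$ factor that closes the numerical gap between $20/4 = 5$ and $4$. The hypothesis $n \geq (d+1)/(\alpha b)$ plays the auxiliary role of absorbing the $1/(\alpha n b - d)$ remainder.
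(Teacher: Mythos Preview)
Your proposal is correct and is essentially the paper's own argument. The paper applies Lemma~\ref{lem:power-function-summation}, uses $b\alpha n - d \geq 1$ to drop the denominator, factors out $n^{d/2}$, and then maximizes $n^{d/2}(4a^{1/b}/R)^{b\alpha n}$ over $n$ (critical point $n = d/(2b\alpha L)$); this is exactly your step of bounding $y^{d/2}e^{-y}$ by $(d/(2e))^{d/2}$ with $y=\alpha n b L$, just written in the $n$-variable instead of the $y$-variable, and with the $1/(\alpha nb-d)$ absorption done first rather than last.
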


\begin{proof}
Start with Lemma~\ref{lem:power-function-summation}, then apply the assumption that $b\alpha n - d \geq 1$.
\begin{align*}
& \sum_{\theta \in \Te \cap B(\ts, R)^c} \left( \frac{a}{\| \theta - \ts \|^b} \right)^{\alpha n} \\ &= a^{\alpha n} \sum_{\theta \in \Te \cap B(\ts, R)^c} \frac{1}{\| \theta - \ts \|^{b\alpha n}}\\
 &\leq a^{\alpha n} \left( \frac{20}{\epsilon\sqrt{d}} \right)^d \frac{(4/R)^{b\alpha n - d}}{b\alpha n - d}\\
 &\leq \left( \frac{20 R}{4\epsilon\sqrt{d}} \right)^d \left(\frac{4a^{1/b}}{R}\right)^{b\alpha n}\\
 &= \left( \frac{20 R}{4\epsilon \sqrt{nd}} \right)^d n^{d/2} \left(\frac{4a^{1/b}}{R}\right)^{b\alpha n}.
\end{align*}
Assuming $4a^{1/b} < R$, the quantity $n^{d/2} (\frac{4a^{1/b}}{R})^{b\alpha n}$ is maximized at $n = \tfrac{d}{2b \alpha\log(R/4a^{1/b})}$. Substituting this critical value and rounding up gives us the desired bound.
\end{proof}

Suppose the sample size is not large enough for Lemma~\ref{lem:power-decay-without-epsilon} to be valid. If the summand is multiplied by a Gaussian-shaped function, then it is still possible to derive a bound that is stable if $\epsilon$ is proportional to $1/\sqrt{n}$, although the dependence on dimension becomes worse. Lemmas~\ref{lem:power-decay-reversed-epsilon-condition} and~\ref{lem:power-decay-middle-epsilon-condition} splits the problem into two additional ranges for $n$.

\begin{lemma}\label{lem:power-decay-reversed-epsilon-condition}
Assume $n \leq (d-1)/\alpha b$, $a, \kappa > 0$, and $R \geq 4a^{1/b} \vee 3\epsilon$. Then
\begin{align*}
& \sum_{\theta \in \Te \cap B(\ts, R)^c} e^{-\kappa \|\theta\|^2} \left( \frac{a}{\| \theta - \ts \|^b} \right)^{\alpha n} \\ &\leq 2 e^{\kappa \|\ts\|^2} \left( \frac{4 \sqrt{2 \pi e} \sqrt{d \vee a^{2/b} \kappa}}{\epsilon \sqrt{n \alpha b \kappa}} \right)^d.
\end{align*}
\end{lemma}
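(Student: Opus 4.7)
The plan is to reduce the sum to one over a purely radial, non-increasing function of $\|\theta - \ts\|$, apply Lemma~\ref{lem:tail-summation} to convert it to a one-dimensional Gaussian-moment integral, and use Stirling's formula to match the constant $4\sqrt{2\pi e}$ in the target.

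The first move is to peel off $e^{-\kappa\|\theta\|^2}$ via Lemma~\ref{lem:squared-norm-bound} (with $u = \theta$, $v = \ts$): $\|\theta\|^2 \geq \tfrac{1}{2}\|\theta - \ts\|^2 - \|\ts\|^2$, whence $e^{-\kappa\|\theta\|^2} \leq e^{\kappa\|\ts\|^2} e^{-\kappa\|\theta-\ts\|^2/2}$. Setting $g(r) := e^{-\kappa r^2/2}(a/r^b)^{\alpha n}$, one checks $(\log g)'(r) = -\kappa r - \alpha n b/r < 0$, so $g$ is decreasing on $(0,\infty)$ and Lemma~\ref{lem:tail-summation} gives
\begin{align*}
\sum_{\theta \in \Te \cap B(\ts,R)^c} g(\|\theta - \ts\|) \leq \frac{2\pi^{d/2}(4/\epsilon)^d}{\Gamma(d/2)}\int_{R/4}^\infty g(r)\, r^{d-1}\, dr.
\end{align*}
I would deliberately keep the original $\Gamma(d/2)$ denominator rather than invoking Lemma~\ref{lem:tail-summation-with-stirling}'s weakening, since the target's constant $4\sqrt{2\pi e}$ leaves no slack for that loosening.

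Extending to $[0,\infty)$ and substituting $u = r\sqrt{\kappa}$ gives the Gaussian-moment evaluation
\begin{align*}
\int_0^\infty e^{-\kappa r^2/2}(a/r^b)^{\alpha n}\, r^{d-1}\, dr = (a^{2/b}\kappa)^{\alpha n b/2}\, \kappa^{-d/2}\cdot 2^{m/2 - 1}\Gamma(m/2),
\end{align*}
where $m := d - \alpha n b \geq 1$ by the hypothesis $n \leq (d-1)/\alpha b$. The essential grouping is bundling $a^{\alpha n}\kappa^{\alpha n b/2}$ into $(a^{2/b}\kappa)^{\alpha n b/2}$: this is precisely the form that bounds cleanly by $D^{\alpha n b/2}$ (with $D := d \vee a^{2/b}\kappa$) and produces the $\sqrt{D}$ in the target.

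The finish is algebraic. Applying Stirling's bounds $\Gamma(m/2) \leq \sqrt{4\pi/m}(m/(2e))^{m/2}e^{1/(6m)}$ and $\Gamma(d/2) \geq \sqrt{4\pi/d}(d/(2e))^{d/2}$ and cancelling powers of $\pi$, $2$, $e$, and $\kappa$, the ratio of my bound to the target reduces to
\begin{align*}
\tfrac{1}{2}\, ((d-m)/d)^{d/2}\, \sqrt{d/m}\, (m/D)^{m/2}\, e^{-m/2}\, e^{1/(6m)},
\end{align*}
which must be shown to be at most $1$. This closes via $((d-m)/d)^{d/2} \leq e^{-m/2}$, the identity $\sqrt{d/m}\,(m/d)^{m/2} = (m/d)^{(m-1)/2} \leq 1$ (which together with $D \geq d$ gives $\sqrt{d/m}\,(m/D)^{m/2} \leq 1$), and $e^{-m + 1/(6m)} \leq e^{-5/6} < 1$ for $m \geq 1$. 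The main obstacle is exactly this bookkeeping: one must interlace Stirling's asymptotics, the case split encoded in $D$, and the Gaussian moment identity so that the exponents in $d$, $m$, $a$, and $\kappa$ reconcile at the precise constant $4\sqrt{2\pi e}$ required by the statement.
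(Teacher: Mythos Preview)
Your proof is correct and follows the same skeleton as the paper's: recentre the Gaussian via Lemma~\ref{lem:squared-norm-bound}, apply Lemma~\ref{lem:tail-summation}, evaluate the resulting integral as a Gamma moment, and close with Stirling's bounds on $\Gamma(m/2)/\Gamma(d/2)$. The only difference is in the final bookkeeping: the paper bounds $e^{1/[6(d-b\alpha n)]}\leq 2$ and $\sqrt{1-b\alpha n/d}^{\,d-b\alpha n-1}\leq 1$ separately, then does an explicit case split on whether $d\leq a^{2/b}\kappa e$ and upper-bounds $n^{d/2}\bigl(\tfrac{a^{2/b}\kappa e}{d}\bigr)^{b\alpha n/2}$ by substituting the extremal admissible $n$ in each case; you instead absorb the case split at the outset via $(a^{2/b}\kappa)^{(d-m)/2}\leq D^{(d-m)/2}$ and then bound the ratio to the target directly with $(1-m/d)^{d/2}\leq e^{-m/2}$ and $(m/d)^{(m-1)/2}\leq 1$. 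Your route is a little tighter and avoids the split, but the two arguments are really the same computation organised differently.
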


\begin{proof}
By Lemma~\ref{lem:squared-norm-bound}, we can upper bound $\| \theta \|^2$ in terms of $\|\theta - \ts\|^2$ and $\|\ts\|^2$.
\begin{align*}
e^{-\kappa \| \theta \|^2} &\leq e^{-\frac{\kappa}{2} \| \theta - \ts \|^2 + \kappa \|\ts\|^2}.
\end{align*}
Using this, we apply Lemma~\ref{lem:tail-summation}.
\begin{align*}
& \sum_{\theta \in \Te \cap B(\ts, R)^c} e^{-\kappa \|\theta\|^2} \left( \frac{a}{\| \theta - \ts \|^b} \right)^{\alpha n} \\ &\leq e^{\kappa \|\ts\|^2} a^{\alpha n} \sum_{\theta \in \Te \cap B(\ts, R)^c} e^{-\frac{\kappa}{2} \|\theta - \ts\|^2} \left( \frac{1}{\| \theta - \ts \|^b} \right)^{\alpha n}\\
 &\leq e^{\kappa \|\ts\|^2} a^{\alpha n} \frac{2\pi^{d/2}}{(\epsilon/4)^d\Gamma(d/2)} \int_{R/4}^\infty e^{-\kappa r^2/2} r^{d-b\alpha n-1} dr.
\end{align*}

The integral from $R/4$ to $\infty$ can be upper bounded by the integral from $0$ to $\infty$. Then we change the variable to $r^2$ and compare the integrand to a Gamma distribution's density.
\begin{align*}
& \int_{R/4}^\infty e^{-\kappa r^2/2} r^{d-b\alpha n-1} dr \\ &\leq \int_{R/4}^\infty e^{-\kappa r^2/2} r^{d-b\alpha n-1} dr\\
 &= \frac{1}{2} \int_0^\infty e^{-\kappa r^2/2} (r^2)^{(d-b\alpha n-2)/2} (2 r dr)\\
 &= \frac{\Gamma(\tfrac{d-b\alpha n}{2})}{2(\kappa/2)^{(d-b\alpha n)/2}}.
\end{align*}

Applying the Stirling upper and lower bounds for the gamma function \cite[Theorem 1]{jameson2015}, we arrive at the bound
\begin{align*}
& \sum_{\theta \in \Te \cap B(\ts, R)^c} e^{-\kappa \|\theta\|^2} \left( \frac{a}{\| \theta - \ts \|^b} \right)^{\alpha n} \\ &\leq e^{\kappa \|\ts\|^2} a^{\alpha n} \frac{2\pi^{d/2}}{(\epsilon/4)^d\Gamma(d/2)} \frac{\Gamma(\tfrac{d-b\alpha n}{2})}{2(\kappa/2)^{(d-b\alpha n)/2}}\\
 &\leq e^{\kappa \|\ts\|^2} \left( \frac{4 \sqrt{2 \pi}}{\epsilon \sqrt{n \kappa}} \right)^d n^{d/2} \left( \frac{a^{2/b} \kappa e}{d}\right)^{b \alpha n/2} \times \\ & \qquad e^{1/6(d - b\alpha n)} \sqrt{1 - \tfrac{b \alpha n}{d}}^{d - b\alpha n - 1}.
\end{align*}

We have assumed that $d - b\alpha n \geq 1$, so we can upper bound the final two factors by $e^{1/6}$ and $1$. If $d \leq a^{2/b} \kappa e$, then we substitute $d/b\alpha$ for $n$ to upper bound $n^{d/2} (\frac{a^{2/b} \kappa e}{d})^{b \alpha n/2}$ by $(\tfrac{a^{2/b} \kappa e}{b\alpha})^{d/2}$. Otherwise, the optimizer of the product of these two factors is $n = \frac{d}{b\alpha \log(d/a^{2/b}\kappa e) }$. Substituting this quantity into the second factor and $d/b\alpha$ into the first, we bound the product of the two factors by $(\tfrac{d}{b\alpha e})^{d/2}$. Using $e^{1/6} \leq 2$ and $d/e \leq ed$ simplifies the bound.
\end{proof}

\begin{lemma}\label{lem:power-decay-middle-epsilon-condition}
Assume $n \in (\frac{d-1}{\alpha b}, \frac{d+1}{\alpha b})$, $a, \kappa > 0$, and $R \geq 4a^{1/b} \vee 3\epsilon$. Then
\begin{align*}
& \sum_{\theta \in \Te \cap B(\ts, R)^c} e^{-\kappa \|\theta\|^2} \left( \frac{a}{\| \theta - \ts \|^b} \right)^{\alpha n} \\ &\leq e^{\kappa \|\ts\|^2} \left(\frac{20}{\epsilon \sqrt{n \alpha b}}\right)^d \left( \frac{22}{R^3} + 2\sqrt{\kappa} \right).
\end{align*}
\end{lemma}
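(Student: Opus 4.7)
The plan is to mirror the opening reduction used in the proof of Lemma~\ref{lem:power-decay-reversed-epsilon-condition}. First, I would apply Lemma~\ref{lem:squared-norm-bound} to bound the factor $e^{-\kappa \|\theta\|^2}$ by $e^{\kappa \|\ts\|^2} e^{-\kappa \|\theta - \ts\|^2/2}$, pulling the constant $e^{\kappa \|\ts\|^2}$ out of the sum. Then I would apply Lemma~\ref{lem:tail-summation} (with the spherically symmetric function $g(r) = e^{-\kappa r^2/2} r^{-b\alpha n}$) to reduce the summation over $\Te \cap B(\ts, R)^c$ to the one-dimensional integral
\begin{align*}
a^{\alpha n} \cdot \frac{2 \pi^{d/2}}{(\epsilon/4)^d \Gamma(d/2)} \int_{R/4}^\infty e^{-\kappa r^2/2} r^{d - 1 - b\alpha n} dr,
\end{align*}
and use Lemma~\ref{lem:tail-summation-with-stirling} to bound the gamma prefactor by $(20/(\epsilon\sqrt{d}))^d$.

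The main obstacle is controlling this integral in the borderline regime $b\alpha n \in (d-1, d+1)$. In this window the exponent $d - 1 - b\alpha n$ sits in $(-2, 0)$, so neither the pure power integral $\int r^{d - 1 - b\alpha n} dr$ (which diverges at infinity when $b\alpha n \leq d$) nor the convenient bound $r^{d - 1 - b\alpha n} \leq 1$ (which would require $r \geq 1$, not guaranteed since $R$ may be small) handles the full range. My plan is to split $\int_{R/4}^\infty$ into a near region and a far region and bound them separately. On the near region, drop the Gaussian factor via $e^{-\kappa r^2/2} \leq 1$, use the monotonicity of $r^{d - 1 - b\alpha n}$ (decreasing since the exponent is negative), and reabsorb the $a^{\alpha n}$ factor by substituting $r = 4 a^{1/b} s$ and invoking $R \geq 4 a^{1/b}$; the integrated contribution comes out proportional to $1/R^3$ in the worst case, namely $b\alpha n$ near $d+1$, where the integrand is closest to $r^{-2}$. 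On the far region, use $b\alpha n > d - 1$ to dominate the polynomial factor by a bounded quantity and integrate the remaining Gaussian, which yields a contribution of order $\sqrt{\kappa}$.

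To convert the prefactor $(20/(\epsilon \sqrt{d}))^d$ into the claimed $(20/(\epsilon \sqrt{n\alpha b}))^d$, I would use $n\alpha b > d - 1$, which makes $(d/(n\alpha b))^{d/2}$ bounded by an absolute constant and hence absorbable into the numerical coefficients. The most delicate part of the plan is propagating the specific constants $20$, $22$, and $2$ through the chain: the Stirling estimate of Lemma~\ref{lem:tail-summation-with-stirling}, the conversion between $\sqrt{d}$ and $\sqrt{n\alpha b}$, the substitution handling $a^{\alpha n}$, and the explicit constants introduced by the two integral bounds. I expect this bookkeeping to be the only genuinely fiddly step; everything else is a routine application of the techniques already developed in this appendix.
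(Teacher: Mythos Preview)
Your proposal follows essentially the same route as the paper: reduce via Lemmas~\ref{lem:squared-norm-bound} and~\ref{lem:tail-summation} to a one-dimensional integral, split it into a near and a far piece, bound them separately, and apply Lemma~\ref{lem:tail-summation-with-stirling} for the prefactor. The paper's specific choices are to split at $r=1$ (using $r^{d-1-b\alpha n}\le r^{-2}$ on $[R/4,1]$ since $b\alpha n<d+1$, and $r^{d-1-b\alpha n}\le 1$ on $[1,\infty)$ since $b\alpha n>d-1$), and to handle the factor $a^{\alpha n}$ not by your substitution but by leaving it outside, pairing it with the $n^{d/2}$ that appears when converting $\sqrt{d}$ to $\sqrt{n\alpha b}$, and bounding $n^{d/2}a^{\alpha n}$ by optimizing over the admissible range of $n$.
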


\begin{proof}
Begin as in Lemma~\ref{lem:power-decay-reversed-epsilon-condition}. Our assumption about $n$ ensures that the exponent of $r$ is negative in the integral. First assume $R/4 < 1$.
\begin{align*}
& \int_{R/4}^\infty e^{-\kappa r^2/2} r^{d-b\alpha n-1} dr \\ &= \int_{R/4}^1 e^{-\kappa r^2/2} r^{d-b\alpha n-1} dr + \int_1^\infty e^{-\kappa r^2/2} r^{d-b\alpha n-1} dr\\
 &\leq \int_{R/4}^1 r^{d-b\alpha n-1} dr + \int_1^\infty e^{-\kappa r^2/2} dr\\
 &\leq \int_{R/4}^1 r^{-2} dr + \int_0^\infty e^{-\kappa r^2/2} dr\\
 &\leq \int_{R/4}^\infty r^{-2} dr + \frac{1}{2} (\sqrt{2 \pi \kappa})\\
 &\leq 22/R^3 + 2\sqrt{\kappa}.
\end{align*}
If $R/4 \geq 1$, then the integral is bounded by $\int_{R/4}^\infty e^{-\kappa r^2/2} dr$ which remains less than our bound.

We use Lemma~\ref{lem:tail-summation-with-stirling} for the coefficient of the integral.
\begin{align*}
& \sum_{\theta \in \Te \cap B(\ts, R)^c} e^{-\kappa \|\theta\|^2} \left( \frac{a}{\| \theta - \ts \|^b} \right)^{\alpha n} \\ &\leq e^{\kappa \|\ts\|^2} a^{\alpha n} \left(\frac{20}{\epsilon \sqrt{d}}\right)^d \left( \frac{22}{R^3} + 2\sqrt{\kappa} \right)\\
 &= e^{\kappa \|\ts\|^2} n^{d/2} a^{\alpha n} \left(\frac{20}{\epsilon \sqrt{nd}}\right)^d \left( \frac{22}{R^3} + 2\sqrt{\kappa}\right).
\end{align*}
If $a < 1$, the product $n^{d/2} a^{\alpha n}$ is maximized at $n = -d/2a\log\alpha$. Substituting that into the second factor gives $e^{-d/2}$; we bound the first factor using the fact that $n \leq (d+1)/b\alpha \leq 2d/b\alpha$. If $a \geq 1$, substitute $2d/b\alpha$ in both factors. In either case, the product is bounded by $(\tfrac{\sqrt{d}[1 \vee a^{2/b}]}{\sqrt{b\alpha}})^d$.
\end{proof}

\subsection{Hellinger affinity}

\begin{lemma}\label{lem:hellinger-affinity-cauchy-schwarz}
Let $P$ and $Q$ be probability measures on $(\X, \A)$. For any event $\h \in \A$,
\begin{align*}
A(P, Q) &\leq \sqrt{P \h} \sqrt{Q \h} + \sqrt{P \h^c} \sqrt{Q \h^c}.
\end{align*}
\end{lemma}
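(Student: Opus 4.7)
The plan is to split the defining integral for the Hellinger affinity over the event $\h$ and its complement, then apply the Cauchy--Schwarz inequality to each piece separately.

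First I would write
\begin{align*}
A(P, Q) &= \int_{\X} \sqrt{p(x) q(x)}\, dx \\
 &= \int_{\h} \sqrt{p(x) q(x)}\, dx + \int_{\h^c} \sqrt{p(x) q(x)}\, dx,
\end{align*}
treating $\sqrt{pq}$ as $\sqrt{p} \cdot \sqrt{q}$ so that Cauchy--Schwarz is naturally set up. (For the general, non-dominated case one would instead express everything with respect to any common dominating measure, e.g.\ $(P+Q)/2$; this is a routine change and not the crux.)

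Next I would apply Cauchy--Schwarz on each of the two pieces, using $f = \sqrt{p}\, \mathbf{1}_{\h}$ and $g = \sqrt{q}\, \mathbf{1}_{\h}$ (and analogously on $\h^c$). This gives
\begin{align*}
\int_{\h} \sqrt{p} \sqrt{q}\, dx &\leq \sqrt{\int_{\h} p\, dx}\ \sqrt{\int_{\h} q\, dx} = \sqrt{P\h}\sqrt{Q\h},
\end{align*}
and similarly $\int_{\h^c} \sqrt{pq}\, dx \leq \sqrt{P\h^c}\sqrt{Q\h^c}$. Summing the two bounds yields the claim.

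There is no real obstacle here; the only subtlety is the measure-theoretic setup (choosing a dominating measure so that densities $p,q$ are well-defined), but since the paper already uses density notation throughout, this is implicit. The name of the lemma also hints at the method: Cauchy--Schwarz applied piecewise on $\h$ and $\h^c$.
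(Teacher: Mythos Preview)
Your proof is correct and matches the paper's approach exactly: split the affinity integral over $\h$ and $\h^c$, then apply Cauchy--Schwarz on each piece. The paper's version explicitly introduces a common dominating measure $\mu$ at the outset (and a footnote notes this is the Data Processing Inequality for the indicator of $\h$), but otherwise the arguments are identical.
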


\begin{proof}
Let $p$ and $q$ be densities of $P$ and $Q$ with respect to a common dominating measure $\mu$. We use the Cauchy-Schwarz inequality.\footnote{Andrew R. Barron pointed out  to the authors that this Lemma is simply an application of the Data Processing Inequality when the random variable is processed by the indicator function of $\h$, and it is useful in hypothesis testing theory.}
\begin{align*}
A(P, Q) &= \int_{\x} \sqrt{p(x) q(x)} d\mu(x)\\
 &= \int_{\h} \sqrt{p(x) q(x)} dx + \int_{\h^c} \sqrt{p(x) q(x)} d\mu(x)\\
 &\leq \sqrt{\int_{\h} p(x) d\mu(x)} \sqrt{\int_{\h} q(x) d\mu(x)} + \\ & \qquad \sqrt{\int_{\h^c} p(x) d\mu(x)} \sqrt{\int_{\h^c} q(x) d\mu(x)}.
\end{align*}
\end{proof}

\begin{lemma}\label{lem:hellinger-affinity-first-moments}
Let $P$ and $Q$ be probability distributions on $\R^d$. If they both have finite first moments, then
\begin{align*}
A(P, Q) &\leq \frac{2( \E \| X - \E X\| + \E \| Y - \E Y \|)}{\| \E X - \E Y \|},
\end{align*}
where $X \sim P$ and $Y \sim Q$.
\end{lemma}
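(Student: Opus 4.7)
Let $\mu_P := \E X$, $\mu_Q := \E Y$, $\Delta := \|\mu_P - \mu_Q\|$, $s_P := \E \|X - \mu_P\|$, and $s_Q := \E \|Y - \mu_Q\|$; if $\Delta = 0$ the bound is vacuous, so assume $\Delta > 0$. The plan is to apply Lemma~\ref{lem:hellinger-affinity-cauchy-schwarz} to an event that separates the two means, and to control the resulting masses by Markov's inequality applied to the first absolute moments.

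Concretely, take $\h = \{x : \|x - \mu_P\| < \Delta/2\}$. By the triangle inequality, $x \in \h$ forces $\|x - \mu_Q\| > \Delta/2$, while $x \in \h^c$ satisfies $\|x - \mu_P\| \geq \Delta/2$ by construction, so Markov yields $P \h^c \leq 2 s_P / \Delta$ and $Q \h \leq 2 s_Q / \Delta$. Substituting into Lemma~\ref{lem:hellinger-affinity-cauchy-schwarz} and trivially bounding $P \h, Q \h^c \leq 1$ produces
\[
A(P, Q) \leq \sqrt{Q \h} + \sqrt{P \h^c} \leq \sqrt{2 s_P / \Delta} + \sqrt{2 s_Q / \Delta},
\]
which already captures the qualitative content that $A(P, Q) \to 0$ as $\Delta / (s_P + s_Q) \to \infty$.

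The obstacle is matching the \emph{linear} dependence on $s_P + s_Q$ stated in the lemma, since Cauchy--Schwarz inherently introduces square roots. To remove them, I would abandon Lemma~\ref{lem:hellinger-affinity-cauchy-schwarz} in favor of a direct integral estimate: multiplying the pointwise triangle inequality $\|x - \mu_P\| + \|x - \mu_Q\| \geq \Delta$ by $\sqrt{p(x) q(x)}$ and integrating gives
\[
\Delta \cdot A(P, Q) \leq \int \big(\|x - \mu_P\| + \|x - \mu_Q\|\big)\sqrt{p(x) q(x)}\,dx,
\]
and I would then apply the weighted AM--GM bound $\sqrt{p q} \leq \tfrac{1}{2}\big(\alpha(x)\, p + q/\alpha(x)\big)$ with a location-dependent $\alpha(x)$ chosen to match whichever of $\|x - \mu_P\|$ or $\|x - \mu_Q\|$ dominates at $x$. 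The delicate step is selecting $\alpha(x)$ so that the integrand decomposes into an $\|x - \mu_P\| p(x)$ piece and an $\|x - \mu_Q\| q(x)$ piece without producing cross terms like $\E_P \|X - \mu_Q\|$ or $\E_Q \|Y - \mu_P\|$, each of which would inflate the bound by a spurious additive multiple of $\Delta$.
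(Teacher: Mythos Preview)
Your first approach---apply Lemma~\ref{lem:hellinger-affinity-cauchy-schwarz} with a set separating the means, bound the two ``wrong-side'' probabilities by Markov, and bound the ``right-side'' probabilities by $1$---is exactly the paper's argument. The only cosmetic difference is that the paper takes $\h$ to be the half-space bounded by the perpendicular bisector of the segment $[\mu_P,\mu_Q]$ rather than the ball $B(\mu_P,\Delta/2)$; both choices give the same Markov bounds $P\h^c \le 2s_P/\Delta$ and $Q\h \le 2s_Q/\Delta$, and hence the same conclusion
\[
A(P,Q)\;\le\;\sqrt{Q\h}+\sqrt{P\h^c}\;\le\;\sqrt{2s_P/\Delta}+\sqrt{2s_Q/\Delta}.
\]

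The obstacle you flag is real, and the paper does not overcome it: its displayed proof simply passes from $\sqrt{Q\h}+\sqrt{P\h^c}$ to $2s_Q/\Delta+2s_P/\Delta$ in the final line, with no justification for dropping the square roots. Since $\sqrt{z}\ge z$ for $z\in[0,1]$, that step is not valid as written; combining with the trivial bound $A\le 1$ rescues it only when $2(s_P+s_Q)/\Delta\ge 1$, i.e.\ precisely when the stated bound is vacuous. So you have reproduced the paper's argument more carefully than the paper itself and correctly isolated an unjustified step. Your alternative sketch via the pointwise triangle inequality and a weighted AM--GM is an attempt to close a gap the paper simply glosses over; as you note, the difficulty there is avoiding cross moments $\E_P\|X-\mu_Q\|$, which introduce an additive $\Delta$ and spoil the bound. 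For the paper's downstream application (a power-decay tail $a/\|\theta-\theta^*\|^b$ in the location-family discussion), the square-root version you actually prove would still suffice, just with $b=1/2$ rather than $b=1$.
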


\begin{proof}
Let $\h$ denote the halfspace containing $\E X$ and demarcated by the perpendicular bisector of the path from $\E X$ to $\E Y$.

The $P$-probability of the complement of $\H$ is bounded by the probability of the complement of a ball within $\H$. We use Markov's inequality to bound the probability that the deviation $\| X - \E X \|$ is larger than $\| \E X - \E Y \|/2$.
\begin{align*}
P \h^c &\leq P \, B(\E X, \tfrac{\| \E X - \E Y \|}{2})^c\\ 
 &\leq \frac{\E \| X - \E X \|}{\| \E X - \E Y \|/2}.
\end{align*}

The same logic also allows us to bound $Q \h$. Now invoke Lemma~\ref{lem:hellinger-affinity-cauchy-schwarz}.
\begin{align*}
A(P, Q) &\leq \sqrt{P \h} \sqrt{Q \h} + \sqrt{P \h^c} \sqrt{Q \h^c}\\
 &\leq \sqrt{Q \h} + \sqrt{P \h^c}\\
 &\leq \frac{\E \| Y - \E Y \|}{\| \E X - \E Y \|/2} + \frac{\E \| X - \E X \|}{\| \E X - \E Y \|/2}.
\end{align*}
\end{proof}

\begin{lemma}\label{lem:hellinger-affinity-bound-median}
Let $P$ and $Q$ be probability distributions on $\R$. Suppse they have medians $m_P$ and $m_Q$, that is, $P (-\infty, m_P] = 1/2$ and $Q (-\infty, m_Q] = 1/2$. Then
\begin{align*}
A(P, Q) \leq e^{-z^2/2},
\end{align*}
where $z$ is the $Q$ probability of the interval defined by an open endpoint at $m_P$ and a closed endpoint at $m_Q$.
\end{lemma}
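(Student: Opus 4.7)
My plan is to invoke Lemma~\ref{lem:hellinger-affinity-cauchy-schwarz} with the halfline $\h := (-\infty, m_P]$ and then verify a short one-variable inequality. By the symmetric roles of $P$ and $Q$ (the interval in the statement swaps endpoints when the median ordering flips, but the resulting analysis is the same), I may assume $m_P \leq m_Q$, so the interval in question is $(m_P, m_Q]$ and $z = Q((m_P, m_Q])$.

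With this choice of $\h$, the defining property of $m_P$ gives $P\h = P\h^c = 1/2$, while $Q(-\infty, m_Q] = 1/2$ together with the definition of $z$ gives $Q\h = 1/2 - z$ and $Q\h^c = 1/2 + z$. Substituting into Lemma~\ref{lem:hellinger-affinity-cauchy-schwarz} and squaring yields
\begin{align*}
A(P,Q)^2 \leq \tfrac{1}{2}\bigl(\sqrt{\tfrac{1}{2}-z} + \sqrt{\tfrac{1}{2}+z}\bigr)^2 = \tfrac{1}{2} + \sqrt{\tfrac{1}{4}-z^2}.
\end{align*}

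It remains to show $\tfrac{1}{2} + \sqrt{\tfrac{1}{4}-z^2} \leq e^{-z^2}$ for $z \in [0, 1/2]$. On this range $z^2 \leq 1/4 < \log 2$, so $e^{-z^2} > 1/2$ and it is safe to move $1/2$ across and square; the inequality then reduces to
\begin{align*}
e^{-z^2}\bigl(1 - e^{-z^2}\bigr) \leq z^2,
\end{align*}
which follows from $e^{-z^2} \leq 1$ together with the standard bound $1 - e^{-u} \leq u$ applied at $u = z^2$. Taking square roots of both sides of the quadratic bound on $A(P,Q)^2$ then delivers the claim.

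The substantive work is done by Lemma~\ref{lem:hellinger-affinity-cauchy-schwarz}; the only subtlety I anticipate is bookkeeping around the open/closed endpoints in the definition of $z$, which matters only if $Q$ has atoms at $m_P$ or $m_Q$. In the continuous setting where this lemma is subsequently used (see the paragraph following Theorem~\ref{thm:gaussian-power-any-sample-size}) this is moot; in the general discrete case one can match conventions by taking $\h = (-\infty, m_P)$ instead when $m_P > m_Q$, so that $\{Q\h, Q\h^c\} = \{1/2 - z, 1/2 + z\}$ in either ordering. Beyond this, I foresee no real obstacle.
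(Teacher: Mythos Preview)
Your proof is correct and follows the paper's approach exactly: the same event $\h=(-\infty,m_P]$ in Lemma~\ref{lem:hellinger-affinity-cauchy-schwarz}, the same reduction to the one-variable inequality $\tfrac{1}{2}+\sqrt{\tfrac14-z^2}\leq e^{-z^2}$. Your elementary verification via $e^{-z^2}(1-e^{-z^2})\leq z^2$ is in fact cleaner than the paper's, which routes through the intermediate bound $1-z^2/2$ and defers the check to ``mathematical software.''
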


\begin{proof}
Assume without loss of generality that $m_P \leq m_Q$. Apply Lemma~\ref{lem:hellinger-affinity-cauchy-schwarz} using the interval $(-\infty, m_P]$ for $\h$.
\begin{align*}
A(P, Q) &\leq \sqrt{P \h} \sqrt{Q \h} + \sqrt{P \h^c} \sqrt{Q \h^c}\\
 &= \frac{1}{\sqrt{2}} (\sqrt{Q \h} + \sqrt{Q \h^c})\\
 &= \frac{1}{\sqrt{2}} (\sqrt{\tfrac{1}{2} - z} + \sqrt{\tfrac{1}{2} + z})\\
 &= \frac{1}{\sqrt{2}} \sqrt{1 + \sqrt{1 - 4z^2}}\\
 &\leq 1 - z^2/2\\
 &\leq e^{-z^2/2}.
\end{align*}
We recommend using mathematical software to algebraically verify the second-to-last step.
\end{proof}

Regarding Lemma~\ref{lem:hellinger-affinity-bound-median}, note that $A$ is symmetric in its arguments, so letting $z$ be the $P$ probability of the interval provides a valid bound as well.

\begin{lemma}\label{lem:hellinger-affinity-bound-marginal-median}
Let $P$ and $Q$ be probability distributions on $\R^d$. Assume they have marginal median vectors $m_P = (m_P^{(1)}, \ldots, m_P^{(d)})$ and $m_Q = (m_Q^{(1)}, \ldots, m_Q^{(d)})$, and assume that $Q$ has marginal densities $q_1, \ldots, q_d$ with respect to Lebesgue measure. Let $R \geq 0$. Then for all $Q$ with $m_Q \in B(m_P, R)$,
\begin{align*}
A(P, Q) \leq e^{-c\| m_Q - m_P \|^2},
\end{align*}
where
\begin{align*}
c := \tfrac{1}{2d} \min_{j \in \{1, \ldots, d\}} \, \min_{x \in [m_Q^{(j)} - R, m_Q^{(j)} + R]} q_j(x)^2.
\end{align*}
\end{lemma}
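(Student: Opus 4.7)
The plan is to reduce the $d$-dimensional affinity bound to $d$ separate one-dimensional bounds, one per coordinate, and then combine them. For each coordinate $j$, let $H_j = \{x \in \R^d : x^{(j)} \leq m_P^{(j)}\}$, so that $P H_j = 1/2$ by definition of the marginal median. Apply Lemma~\ref{lem:hellinger-affinity-cauchy-schwarz} with $\h = H_j$ and mimic the algebra in the proof of Lemma~\ref{lem:hellinger-affinity-bound-median}: setting $z_j := |Q H_j - 1/2|$, one obtains
\[
A(P, Q) \leq \tfrac{1}{\sqrt{2}}\bigl(\sqrt{\tfrac{1}{2} - z_j} + \sqrt{\tfrac{1}{2} + z_j}\bigr) \leq e^{-z_j^2/2}.
\]
Equivalently, this step could be phrased as the data processing inequality for Hellinger affinity under the coordinate projection $x \mapsto x^{(j)}$, followed by direct application of Lemma~\ref{lem:hellinger-affinity-bound-median} to the pair $(P_j, Q_j)$ of $j$-th marginals; the one-dimensional argument then gives the same bound in terms of the $Q_j$-probability of the interval between $m_P^{(j)}$ and $m_Q^{(j)}$.

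Next, I would lower-bound $z_j$ using the assumed density $q_j$. Since $z_j$ is the $Q_j$-mass of the interval between $m_P^{(j)}$ and $m_Q^{(j)}$, we have
\[
z_j \geq |m_P^{(j)} - m_Q^{(j)}| \cdot \min_{x \in I_j} q_j(x),
\]
where $I_j$ is that interval. The hypothesis $m_Q \in B(m_P, R)$ gives $|m_P^{(j)} - m_Q^{(j)}| \leq \|m_Q - m_P\| \leq R$, so $I_j \subseteq [m_Q^{(j)} - R, m_Q^{(j)} + R]$ and the minimum of $q_j$ over $I_j$ is at least $q_j^* := \min_{x \in [m_Q^{(j)} - R, m_Q^{(j)} + R]} q_j(x)$. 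Therefore $z_j^2 \geq (q_j^*)^2 (m_P^{(j)} - m_Q^{(j)})^2$.

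Finally, since the bound $A(P, Q) \leq e^{-z_j^2/2}$ holds for \emph{every} $j$, I take the $j$ that maximizes the exponent. Writing $q^* := \min_j q_j^*$, this gives
\[
A(P, Q) \leq \exp\!\Bigl(-\tfrac{1}{2}(q^*)^2 \max_j (m_P^{(j)} - m_Q^{(j)})^2\Bigr).
\]
The standard inequality $\max_j a_j^2 \geq \tfrac{1}{d}\sum_j a_j^2$ applied to $a_j = m_P^{(j)} - m_Q^{(j)}$ then yields $\max_j (m_P^{(j)} - m_Q^{(j)})^2 \geq \|m_P - m_Q\|^2/d$, which produces the claimed constant $c = (q^*)^2/(2d)$.

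The only real subtlety is matching the definition of $c$ used in the statement: the minimum is taken over the ball-dictated window $[m_Q^{(j)} - R, m_Q^{(j)} + R]$ rather than over the (possibly smaller) interval $I_j$ connecting the two marginal medians. This requires the bound on $R$ in the hypothesis to guarantee $I_j$ lies in the window, and is the one place where the radius condition is used. Everything else is a routine combination of the one-dimensional Lemma~\ref{lem:hellinger-affinity-bound-median}, the density lower-bound on $z_j$, and the $\ell^\infty$-vs-$\ell^2$ inequality that contributes the factor $1/d$.
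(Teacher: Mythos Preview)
Your proposal is correct and follows essentially the same route as the paper: reduce to a single coordinate via the data processing inequality (or, equivalently, the halfspace trick you describe), apply Lemma~\ref{lem:hellinger-affinity-bound-median}, lower-bound the interval probability by length times minimum density, and then use $\max_j (m_P^{(j)}-m_Q^{(j)})^2 \geq \tfrac{1}{d}\|m_P-m_Q\|^2$ to get the factor $1/d$. The only cosmetic difference is that the paper picks the coordinate with the largest median gap up front, whereas you bound all coordinates and then optimize; after replacing each $q_j^*$ by the global minimum $q^*$ these are identical.
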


\begin{proof}
Let $P^*$ and $Q^*$ be the marginal distributions along the coordinate with the largest absolute difference between $m_P$ and $m_Q$; call the coordinates in this direction $m_P^*$ and $m_Q^*$. By Lemma~\ref{lem:hellinger-affinity-bound-median},
\begin{align*}
A(P^*, Q^*) &\leq e^{-z^2/2},
\end{align*}
where $z$ is the $Q^*$ probability of the interval from  $m_P^*$ to $m_Q^*$. It is at least as large as the absolute difference between the coordinates times the minimum value of the density $q^*$ in the interval between them. The largest squared coordinate difference is at least as large as the average squared coordinate difference, that is
\begin{align*}
|m_Q^* - m_P^*|^2 \geq \tfrac{1}{d} \|m_Q - m_P\|^2.
\end{align*}
The $1/2$ factor in $c$ comes from Lemma~\ref{lem:hellinger-affinity-bound-median}.

The marginal distributions $P^*$ and $Q^*$ can be produced by ``processing" draws from $P$ and $Q$. Because Hellinger affinity is a monotonically decreasing transformation of squared Hellinger divergence, the Data Processing Inequality implies that $A(P, Q) \leq A(P^*, Q^*)$.
\end{proof}

\textbf{Lemma~\ref{lem:exponential-affinity-guassian-bound}}
\begin{proof}
Let $r$ denote the family's carrier function and $\psi$ denote the log-partition function.
\begin{align}\label{eq:exponential-affinity}
A(\Pts, \Pt) &:= \int_\X \sqrt{\pt^*(x) \pt(x)} dx\nonumber\\
 &= \int_\X r(x) e^{\frac{1}{2}(\ts + \theta)'\phi(x) - \frac{1}{2}(\psi(\ts) + \psi(\theta))} dx\nonumber\\
 &= e^{-\frac{1}{2}(\psi(\ts) + \psi(\theta))} \int_\X r(x) e^{\frac{1}{2}(\ts + \theta)'\phi(x)} dx\nonumber\\
 &= e^{-\frac{1}{2}(\psi(\ts) + \psi(\theta))} e^{\psi((\ts + \theta)/2)}\nonumber\\
 &= e^{-[(\psi(\ts) + \psi(\theta))/2 - \psi((\ts + \theta)/2)]}.
\end{align}
The exponent is a negative Jensen difference with a distribution that puts $1/2$ mass on each of $\theta$ and $\ts$. Its expectation is $(\ts + \theta)/2$, and its variance is $\| \theta - \ts \|^2/4$. Applying Lemma~\ref{lem:jensen-difference-bounds},
\begin{align*}
& \frac{\psi(\ts) + \psi(\theta)}{2} - \psi\left(\frac{\ts + \theta}{2}\right) \\ & \geq \frac{\| \theta - \ts \|^2}{8} \inf_{\tt \in \Theta} \lambda_d(\nabla \nabla' \psi(\tt)).
\end{align*}
It is a well-known fact about exponential families that $\nabla \nabla' \psi(\tt)$ is equal to the covariance matrix of the sufficient statistic vector $\C_{X \sim P_{\tt}} \phi(X)$.

\end{proof}

\subsection{Entropy of subprobability measures}

We extend the notion of entropy to more general measures. Let $Q$ be a measure on a countable set $\x$. Then we define its entropy
\begin{align*}
H(Q) := \sum_{x \in \x} q(x) \log \frac{1}{q(x)},
\end{align*}
where $q(x) := Q(\{x\})$ is the density of $Q$ with respect to counting measure.\footnote{One can likewise extend the notion of differential entropy $h$ for Borel measures on $\R^d$ by using Lebesgue measure rather than counting measure. Lemmas~\ref{lem:entropy-extension} and~\ref{lem:entropy-extension-inequality} also hold for differential entropy when $Q$ is a finite Borel measure on $\x = \R^d$.}

\begin{lemma}\label{lem:entropy-extension}
Let $Q$ be a finite measure on a countable set $\x$, and let $\widetilde{Q} := \tfrac{1}{Q \x} Q$ be the normalized version of $Q$. Then
\begin{align*}
H(Q) = (Q \x) H(\widetilde{Q}) + (Q \x) \log \tfrac{1}{Q \x}.
\end{align*}
\end{lemma}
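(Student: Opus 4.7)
The plan is to prove this by direct substitution: write every instance of the unnormalized density $q(x)$ in terms of the normalized density $\widetilde{q}(x)$ and the total mass $c := Q\x$, then use the product rule for the logarithm to split the sum.

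Concretely, I would start by observing that for each $x \in \x$, the normalization gives $q(x) = c\,\widetilde{q}(x)$. Substituting this into the definition of $H(Q)$,
\begin{align*}
H(Q) &= \sum_{x \in \x} q(x) \log \frac{1}{q(x)} \\
 &= \sum_{x \in \x} c\,\widetilde{q}(x) \left[ \log \frac{1}{c} + \log \frac{1}{\widetilde{q}(x)} \right].
\end{align*}
Next I would distribute and pull constants out of the sum to get
\begin{align*}
H(Q) &= c \log \tfrac{1}{c} \sum_{x \in \x} \widetilde{q}(x) + c \sum_{x \in \x} \widetilde{q}(x) \log \frac{1}{\widetilde{q}(x)}.
\end{align*}
The first sum is $1$ because $\widetilde{Q}$ is a probability measure, and the second sum is $H(\widetilde{Q})$ by definition. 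Replacing $c$ by $Q\x$ yields the claimed identity.

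There is really no obstacle here beyond checking that terms with $q(x) = 0$ contribute nothing on both sides (using the standard convention $0 \log (1/0) = 0$), so the rearrangement is valid term-by-term. The analogous argument with Lebesgue measure in place of counting measure immediately gives the differential-entropy version mentioned in the footnote.
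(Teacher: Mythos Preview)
Your proof is correct and is essentially the same direct computation as the paper's: both substitute $q(x) = (Q\x)\,\widetilde{q}(x)$, split the logarithm, and identify the two resulting sums as $(Q\x)H(\widetilde{Q})$ and $(Q\x)\log\tfrac{1}{Q\x}$. The only cosmetic difference is that the paper writes $q(x)/Q\x$ inline rather than introducing the abbreviation $c$.
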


\begin{proof}
\begin{align*}
H(Q) &:= \sum_{x \in \x} q(x) \log \frac{1}{q(x)}\\
 &= (Q \x) \sum_{x \in \x} \frac{q(x)}{Q \x} \log \frac{1 / Q \x}{q(x)/Q \x}\\
 &= (Q \x) \sum_{x \in \x} \frac{q(x)}{Q \x} \log \frac{1}{q(x)/Q \x} + \\ & \qquad (Q \x) \sum_{x \in \x} \frac{q(x)}{Q \x} \log \frac{1}{Q \x}.
\end{align*}
\end{proof}

\begin{lemma}\label{lem:entropy-extension-inequality}
Let $Q$ be a subprobability measure on a countable set $\x$, and let $\widetilde{Q} := \tfrac{1}{Q \x} Q$ be the normalized version of $Q$. Then
\begin{align*}
H(Q) \leq H(\widetilde{Q}) + 1/e.
\end{align*}
\end{lemma}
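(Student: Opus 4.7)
The plan is to apply the preceding Lemma~\ref{lem:entropy-extension} to decompose $H(Q)$ and then bound each piece separately, exploiting the fact that $Q\x \in [0,1]$.

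Write $s := Q\x \in [0,1]$. By Lemma~\ref{lem:entropy-extension},
\begin{align*}
H(Q) = s \, H(\widetilde{Q}) + s \log \tfrac{1}{s}.
\end{align*}
Since $\widetilde{Q}$ is a probability measure on a countable set, $H(\widetilde{Q}) \geq 0$, so the coefficient $s \leq 1$ gives $s H(\widetilde{Q}) \leq H(\widetilde{Q})$. It remains to bound the scalar term $s \log(1/s)$, which is the standard binary-entropy-type quantity.

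For the remaining term, I would maximize $\phi(s) := -s\log s$ on $[0,1]$ by calculus: $\phi'(s) = -\log s - 1$ vanishes at $s = 1/e$, and $\phi(1/e) = 1/e$. The values at the endpoints (interpreting $0 \log 0 = 0$) are both $0$, so $1/e$ is the maximum. Combining the two bounds yields $H(Q) \leq H(\widetilde{Q}) + 1/e$, as desired.

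There is essentially no obstacle here; the lemma is a routine consequence of Lemma~\ref{lem:entropy-extension} paired with the elementary optimization of $-s\log s$. The only subtlety worth flagging is ensuring the convention $0 \log 0 = 0$ so that the formulas remain valid at $s=0$ (corresponding to the trivial zero measure, for which both sides vanish).
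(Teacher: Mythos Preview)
Your proof is correct and matches the paper's approach exactly: apply Lemma~\ref{lem:entropy-extension}, use $Q\x \leq 1$ together with $H(\widetilde{Q}) \geq 0$ to bound the first term, and use $\max_{s \in [0,1]}(-s\log s) = 1/e$ for the second.
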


\begin{proof}
We apply Lemma~\ref{lem:entropy-extension}, noting that $Q \x \leq 1$ and that the function $-z \log z$ has maximum $1/e$.
\end{proof}

In particular, for any subprobability distribution $Q$, $H(Q) \leq \log |\x| + 1/e$. A cleaner inequality holds if $|\x| \geq 3$.

\begin{lemma}\label{lem:entropy-extension-size}
Let $Q$ be a subprobability measure on a countable set $\x$. If $|\x| \geq 3$, then
\begin{align*}
H(Q) \leq \log |\x|.
\end{align*}
\end{lemma}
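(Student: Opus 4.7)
The plan is to combine Lemma~\ref{lem:entropy-extension} with the standard maximum entropy bound for probability measures, and then reduce the claim to a one-variable convexity/calculus inequality in $t := Q\mathcal{X} \in [0,1]$.

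First I would write, via Lemma~\ref{lem:entropy-extension},
\begin{align*}
H(Q) = t\, H(\widetilde{Q}) + t\log\tfrac{1}{t},
\end{align*}
and use the fact that the probability measure $\widetilde{Q}$ on the set $\mathcal{X}$ satisfies $H(\widetilde{Q}) \leq \log|\mathcal{X}|$ (uniform is the entropy maximizer). Substituting gives
\begin{align*}
H(Q) \leq t\log|\mathcal{X}| + t\log\tfrac{1}{t}.
\end{align*}
So the desired conclusion $H(Q)\leq \log|\mathcal{X}|$ reduces to proving the one-variable inequality
\begin{align*}
t\log\tfrac{1}{t} \leq (1-t)\log|\mathcal{X}| \qquad \text{for all } t\in[0,1],
\end{align*}
under the hypothesis $|\mathcal{X}|\geq 3$.

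Next I would verify this one-variable inequality by setting $f(t) := (1-t)\log|\mathcal{X}| + t\log t$ and checking that $f\geq 0$ on $[0,1]$. Differentiating gives $f'(t) = -\log|\mathcal{X}| + \log t + 1$, which is strictly increasing in $t$ and attains its maximum on $[0,1]$ at $t=1$, where $f'(1) = 1-\log|\mathcal{X}|$. This is the only place the hypothesis $|\mathcal{X}|\geq 3$ enters: since $3>e$, we have $f'(1)<0$, so $f'<0$ throughout $(0,1)$. Hence $f$ is strictly decreasing on $[0,1]$ and $f(t)\geq f(1)=0$, as required. The boundary values $t=0$ (interpret $0\log 0=0$) and $t=1$ are handled trivially.

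The only subtle point, and hence the ``main obstacle,'' is clarifying why we need $|\mathcal{X}|\geq 3$ rather than just $|\mathcal{X}|\geq 2$: the threshold is really $|\mathcal{X}|\geq e$, and since $|\mathcal{X}|$ is an integer this becomes $|\mathcal{X}|\geq 3$. For $|\mathcal{X}|=2$ the inequality $t\log(1/t)\leq (1-t)\log 2$ fails for $t$ slightly below $1$, which is precisely why the cleaner Lemma~\ref{lem:entropy-extension-inequality} form (with the additive $1/e$) is needed in the small-alphabet case. Everything else is mechanical, so no appeal to additional structure or to the earlier affinity/concentration lemmas is necessary.
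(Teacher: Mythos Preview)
Your proof is correct and follows essentially the same route as the paper: both invoke Lemma~\ref{lem:entropy-extension}, bound $H(\widetilde{Q})\le\log|\mathcal{X}|$, and then reduce to the one-variable inequality $t\log|\mathcal{X}|-t\log t\le\log|\mathcal{X}|$ on $[0,1]$, verified by noting that the critical point $|\mathcal{X}|/e$ lies outside $[0,1]$ exactly when $|\mathcal{X}|\ge e$, so the maximum occurs at $t=1$. Your monotonicity argument for $f$ is just the complementary phrasing of the paper's maximization of $-f+\log|\mathcal{X}|$.
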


\begin{proof}
Consider the expression in Lemma~\ref{lem:entropy-extension}, first bounding $H(\widetilde{Q})$ by $\log |\x|$. The function $z \mapsto z \log |\x| - z \log z$ (with domain $[0, 1]$) is maximized at $e^{\log |\x|-1} \wedge 1$. When $e^{\log |\x|-1} \geq 1$, then the function is bounded by $z\log |\x|$ which is no greater than $\log |\x|$. This case applies when $|\x| \geq e$. Otherwise, the function's maximum value is $|\x|/e$. Thus, the proposed inequality does not hold for sets of size $1$ or $2$.
\end{proof}

\textbf{Proof of Lemma~\ref{lem:estimator-entropy-bound}}
\begin{proof}
Let $q$ denote the pmf of $\th$.
\begin{align*}
H(\th) &= \sum_{\theta \in \Te} q(\theta) \log \frac{1}{q(\theta)}.
\end{align*}
We will bound two parts of the summation separately: outside a ball centered at $\ts$ and then inside that ball.

The function $z \mapsto z \log (1/z)$ increases as $z$ goes from $0$ to $1/e$. If $\| \theta - \ts\| \geq 1/\sqrt{c}$, then the $\theta$ term of the entropy summation can only be increased by substituting the exponential probability bound
\begin{align*}
q(\theta) \log \frac{1}{q(\theta)} \leq e^{-c \| \theta - \ts\|^2} c \| \theta - \ts\|^2.
\end{align*}
Thus, outside the ball $B := B(\ts, R \vee c^{-1/2} \vee 3\epsilon)$, we can bound the summation by an integral using Lemma~\ref{lem:tail-summation} then compare the integral with a Gamma pdf.
\begin{align*}
& \sum_{\theta \in \Te \cap B^c} q(\theta) \log \frac{1}{q(\theta)} \\ &\leq \sum_{\theta \in \Te \cap B^c} e^{-c \| \theta - \ts\|^2} c \| \theta - \ts\|^2\\
 &\leq \frac{2 \pi^{d/2}}{(\epsilon/4)^d \Gamma(d/2)} \int_{[R \vee c^{-1/2} \vee 3\epsilon]/4}^\infty e^{cr^2} cr^2 r^{d-1} dr\\
 &\leq \frac{c \pi^{d/2}}{(\epsilon/4)^d \Gamma(d/2)} \int_0^\infty e^{cr^2} (r^2)^{d/2} 2r dr\\
 &= \frac{c \pi^{d/2}}{(\epsilon/4)^d \Gamma(d/2)} \frac{\Gamma(d/2 + 1)}{c^{d/2+1}}\\
 &= \frac{d}{2} \left( \frac{4\sqrt{\pi}}{\epsilon \sqrt{c}} \right)^d.
\end{align*}

Next, we need to bound the terms coming from grid-points inside $B$. $Q$ restricted to this subset of grid-points can be considered a subprobability measure. Because the ball has radius at least $3 \epsilon$, it contains enough grid-points for Lemma~\ref{lem:entropy-extension-size} to apply; thus the entropy of this subprobability is bounded by the log-cardinality of the ball's grid-points. The number of grid-points in the hypercube circumscribing the ball is no more than $(1+\tfrac{2[R \vee c^{-1/2} \vee 3\epsilon]}{\epsilon})^d$.
\begin{align*}
\sum_{\theta \in \Te \cap B} q(\theta) \log \frac{1}{q(\theta)} &\leq \log \left(1+\frac{2[R \vee c^{-1/2} \vee 3\epsilon]}{\epsilon}\right)^d.
\end{align*}
\end{proof}

\vskip 0.2in
\bibliographystyle{IEEEtran}
\bibliography{bibliography}

%\bibliography{bibliography}
%\begin{IEEEbiographynophoto}{W. D. Brinda}
%received a B.S. in Neuroscience (\emph{cum laude}) from Tulane University in 2007, an M.S. in Applied Statistics from %Indiana University in 2011, and a Ph.D. in Statistics from Yale University in 2017. His research interests include model %selection, information theory, and causality.
%\end{IEEEbiographynophoto}

%\begin{IEEEbiographynophoto}{Jason M. Klusowski}
%(S'16) received a B.S. (\emph{hons.}) in Statistics \& Mathematics from the University of Manitoba in 2013 and an M.A. in %Statistics from Yale University in 2017. He is currently working towards the Ph.D. degree in the Department of Statistics %\& Data Science at Yale University. His research interests are in high-dimensional function estimation, network sampling, %and shape constrained estimation.
%\end{IEEEbiographynophoto}

\end{document}